\newtheorem{theorem}{Theorem}[section]
\newtheorem{lemma}[theorem]{Lemma}
\newtheorem{proposition}[theorem]{Proposition}
\newtheorem{corollary}[theorem]{Corollary}
\theoremstyle{definition}
\newtheorem{definition}[theorem]{Definition}
\newtheorem{question}[theorem]{Question}
\newtheorem{remark}[theorem]{Remark}
\numberwithin{equation}{theorem}
 \DeclareMathOperator{\Ker}{Ker}
\DeclareMathOperator{\Image}{Im} \DeclareMathOperator{\Ext}{Ext}
\DeclareMathOperator{\Hom}{Hom} \DeclareMathOperator{\uExt}{\underline{Ext}} \DeclareMathOperator{\uHom}{\underline{Hom}}
\def\bt{\begin{theorem}}
\def\et{\end{theorem}}
\def\bl{\begin{lemma}}
\def\el{\end{lemma}}
\def\br{\begin{remark}}
\def\er{\end{remark}}
\def\bc{\begin{corollary}}
\def\ec{\end{corollary}}
\begin{document}
%\begin{CJK*}{GBK}{song}
\title [Regularity criterion and classification for algebras of Jordan type]
{Regularity criterion and classification\\ for algebras of Jordan type
}
\author{Y. Shen, G.-S. Zhou and D.-M. Lu}
\address{Department of Mathematics, Zhejiang University,
Hangzhou 310027, China}
\email{11206007@zju.edu.cn;\quad 10906045@zju.edu.cn;\quad dmlu@zju.edu.cn}
\date{}

\begin{abstract} We show that Artin-Schelter regularity of a $\mathbb{Z}$-graded algebra can be examined by its associated $\mathbb{Z}^r$-graded algebra. We prove that there is exactly one class of four-dimensional Artin-Schelter regular algebras with two generators of degree one in the Jordan case. This class is strongly noetherian, Auslander regular, and Cohen-Macaulay. Their automorphisms and point modules are described.
\end{abstract}

\subjclass[2000]{16E65, 16W50, 14A22}

% 16E10 (1991-now) Homological dimension
% 16E65 (2000-now) Homological conditions on rings
%       (generalizations of regular, Gorenstein, Cohen-Macaulay rings, etc.)
% 16W50 (1991-now) Graded rings and modules
% 14A22 (1991-now) Noncommutative algebraic geometry
% 16E45  (2000-now) Differential graded algebras and applications

\keywords{Artin-Schelter regular algebra, $A_\infty$-algebra, Gr\"{o}bner basis}

\maketitle

\section*{Introduction}
The classification of Artin-Schelter regular algebras, or classification of quantum projective spaces, is one of important questions in noncommutative projective algebraic geometry. Many researchers have been interested in Artin-Schelter regular
algebras and many have made great contributions on the subject.
In the case of global dimension 4, plenty of Artin-Schelter regular algebras have been
discovered in recent years \cite{LPWZ2,RZ,ZZ1,ZZ2}, and
most of them are obtained by endowing with an appropriate
$\mathbb{Z}^2$-grading on the algebras. It is not the case for 4-dimensional Artin-Schelter regular algebras of Jordan type (see the subsection 3.1 below). This motivates us to study this kind of algebras.

The idea used here is to link a $\mathbb{Z}$-graded algebra with an appropriate $\mathbb{Z}^r$-graded algebra for some positive integer $r (>1)$. By means of Gr\"obner basis theory, we show it is available for those $\mathbb{Z}$-graded algebras without $\mathbb{Z}^2$-grading on them. Our first result is a regularity criterion for a connected graded algebra.

\begin{theorem}\label{first theo}
Let $A=k\langle X\rangle/I\/$ be a connected graded algebra. Then $A$ is Artin-Schelter regular in case there is an appropriate $\mathbb{Z}^r$-grading on $k\langle X\rangle$ such that  $\mathbb{Z}^r$-graded algebra $k\langle X\rangle/(LH(\mathcal{G}))$ is Artin-Schelter regular, where $\mathcal{G}$ is the reduced Gr\"obner basis of $I$ with respect to an admissible order $\prec_{\mathbb{Z}^r}$ on $X^*$.
\end{theorem}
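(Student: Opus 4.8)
\emph{Proof proposal.} The plan is to realize $A$ as a filtered deformation of the $\mathbb{Z}^r$-graded algebra $B:=k\langle X\rangle/(LH(\mathcal{G}))$ and then to transport Artin--Schelter regularity from $B$ to $A$ across the associated graded. Fix the appropriate $\mathbb{Z}^r$-grading on $k\langle X\rangle$ together with a total order on $\mathbb{Z}^r$ making it an ordered monoid and refining the given $\mathbb{Z}$-degree, and let $\prec_{\mathbb{Z}^r}$ be an admissible order on $X^*$ compatible with these choices, in the sense that for every $\mathbb{Z}$-homogeneous $f\in k\langle X\rangle$ the leading monomial of $f$ coincides with the leading monomial of its leading $\mathbb{Z}^r$-homogeneous component $LH(f)$. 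The first step is the purely combinatorial one: because $\prec_{\mathbb{Z}^r}$ is admissible and compatible with the grading, the set $\{LH(g):g\in\mathcal{G}\}$ is a Gr\"obner basis of the ideal $J:=(LH(\mathcal{G}))$ it generates, and $LM(J)=LM(I)$. Consequently $A$ and $B$ have the same $k$-basis of normal words and hence the same Hilbert series, $H_A=H_B$; moreover each $LH(g)$ is $\mathbb{Z}^r$-homogeneous, so $B$ is genuinely $\mathbb{Z}^r$-graded while remaining connected graded for the total degree.

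Next, using the total order on $\mathbb{Z}^r$, let $F_\gamma A$ be the image in $A$ of the span of all words of $\mathbb{Z}^r$-degree at most $\gamma$. This is an exhaustive, separated, multiplicative filtration, and since every defining relation of $A$ has leading $\mathbb{Z}^r$-homogeneous form lying in $J$, killing the classes $LH(g)$ defines a surjection of connected graded algebras $B\twoheadrightarrow\operatorname{gr}^F\!A$. Because the filtration is exhaustive and separated one has $H_{\operatorname{gr}^F\!A}=H_A$, which combined with $H_A=H_B$ forces this surjection to be an isomorphism $\operatorname{gr}^F\!A\cong B$. Thus $A$ is a filtered algebra whose associated graded algebra is the given AS-regular algebra $B$.

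It remains to transfer the three defining properties of AS-regularity. Write $d=\operatorname{gl.dim}B<\infty$ and let $\ell$ be the Gorenstein parameter of $B$, so that $\Tor^B_d(k,k)\cong k(-\ell)$, while minimality of the free resolution of ${}_Bk$ forces $\Tor^B_i(k,k)$ into internal degrees $<\ell$ for $i<d$. Lifting a finite graded free resolution of ${}_Bk$ along the filtration produces one of ${}_Ak$ of the same length, whence $\operatorname{gl.dim}A\le d$; the filtered-to-graded spectral sequence $\Tor^B_\bullet(k,k)\Rightarrow\Tor^A_\bullet(k,k)$, finite-dimensional in each internal degree, then shows that $\Tor^A_i(k,k)_j=0$ whenever $\Tor^B_i(k,k)_j=0$, and comparing the coefficient of $t^\ell$ on both sides of the Euler identity $H_A(t)^{-1}=H_B(t)^{-1}=\sum_{i,j}(-1)^i\dim_k\Tor^A_i(k,k)_j\,t^j$ forces $\Tor^A_d(k,k)\cong k(-\ell)$; in particular $\operatorname{gl.dim}A=d$. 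Finite Gelfand--Kirillov dimension of $A$ is immediate from $H_A=H_B$. Finally, the analogous spectral sequence $\Ext^i_B(k,B)\Rightarrow\Ext^i_A(k,A)$ together with $\Ext^i_B(k,B)\cong\delta_{i,d}\,k(\ell)$ shows $\Ext^i_A(k,A)=0$ for $i\neq d$ and $\dim_k\Ext^d_A(k,A)\le 1$; since ${}_Ak$ is a perfect complex, $\mathbf{R}\!\Hom_A(k,A)$ is nonzero, so $\Ext^d_A(k,A)\cong k(\ell)$. Hence $A$ is Artin--Schelter regular of global dimension $d$ and Gorenstein parameter $\ell$.

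The main obstacle is the first step, i.e.\ the identification $\operatorname{gr}^F\!A\cong B$ via the Gr\"obner basis $\{LH(g):g\in\mathcal{G}\}$: one must arrange the $\mathbb{Z}^r$-grading, the order on $\mathbb{Z}^r$, and the admissible order on $X^*$ so precisely that ``pass to the leading monomial'' commutes with ``pass to the leading $\mathbb{Z}^r$-homogeneous component'' --- this is exactly what the word ``appropriate'' in the statement must guarantee --- after which the vanishing of all overlap reductions for $\mathcal{G}$ transfers verbatim to $\{LH(g):g\in\mathcal{G}\}$ and no unexpected relations or collapse of the normal basis can occur. A secondary point is the convergence of the filtered-to-graded spectral sequences for $\Tor$ and $\Ext$, which requires the filtration to be sufficiently well behaved (again a consequence of the compatibility conditions, together with the finiteness already available from $B$); granting this, the degree bookkeeping that pins down the precise degree and one-dimensionality of $\Tor^A_d(k,k)$ and $\Ext^d_A(k,A)$ is routine.
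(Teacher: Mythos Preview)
Your proposal is correct and follows the paper's strategy: endow $A$ with the $\mathbb{Z}^r$-filtration coming from the chosen grading on $k\langle X\rangle$, identify the associated graded algebra with $B=k\langle X\rangle/(LH(\mathcal{G}))$, and then lift AS-regularity from $B$ to $A$. The differences are in execution rather than in idea. For the identification $\operatorname{gr}^F\!A\cong B$ the paper simply cites a theorem from Li's book on Gr\"obner bases, whereas you give a self-contained Hilbert-series argument (same normal words, hence $H_A=H_B$, hence the canonical surjection $B\twoheadrightarrow\operatorname{gr}^F\!A$ is an isomorphism). For the transfer of regularity the paper builds the machinery by hand: it proves lemmas on strict filtered morphisms and exactness (its Lemmas~2.3--2.7), lifts the minimal free resolution of ${}_Bk$ termwise to a free resolution of ${}_Ak$, dualizes via $\uHom(-,A)$, and reads off the Gorenstein condition directly from $G^r(\uExt^d_A(k,A))\cong k(l)$; you compress all of this into the filtered-to-graded spectral sequences for $\Tor$ and $\Ext$ together with an Euler-characteristic count. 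The paper's route is more elementary and avoids having to justify convergence of spectral sequences for $\mathbb{Z}^r$-indexed filtrations, making the graded shifts explicit; your route is shorter once that formalism is granted, and your Hilbert-series argument for finite GK-dimension is a bit cleaner than the paper's appeal to an external result on multifiltered algebras.
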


An application of the criterion in this paper is the connected Artin-Schelter regular algebras of dimension 4 with two generators whose Frobenius data is in Jordan case. The generic constraints condition (see \cite{LPWZ2}) in this case turns out to be invalid. Using the $A_\infty$-algebra theory and applying the criterion to the case, we get a classification result:

\begin{theorem}
The algebra $\mathcal{J}=\mathcal{J}(u, v, w)=k\langle x,y\rangle/(f_1,f_2)$ is an Artin-Schelter regular algebra of global dimension $4$, where
\begin{eqnarray*}
\begin{aligned}
f_1&=xy^2-2yxy+y^2x,\\
f_2&=x^3y-3x^2yx+3xyx^2-yx^3+(1-u)xyxy+uyx^2y\\
&\hskip10mm +(u-3)yxyx+(2-u)y^2x^2-vy^2xy+vy^3x+wy^4,
\end{aligned}
\end{eqnarray*}
and $u, v, w\in k$.

If $k$ is algebraically closed of characteristic $0$, then it is, up to isomorphism, the unique Artin-Schelter regular algebra of global dimension $4$ which is generated by two elements whose Frobenius data is the Jordan type.
\end{theorem}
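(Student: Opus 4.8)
\emph{Artin--Schelter regularity.} This will follow from Theorem~\ref{first theo}. Grade $k\langle x,y\rangle$ by $\mathbb Z^2$ with $\deg x=(1,0)$ and $\deg y=(0,1)$, and take for $\prec_{\mathbb Z^2}$ the admissible order comparing monomials first by total degree, then by the number of $x$'s (more being larger), then left-to-right lexicographically with $x\succ y$. Then $\operatorname{LM}(f_1)=xy^2$ and $\operatorname{LM}(f_2)=x^3y$, so the leading $\mathbb Z^2$-homogeneous part of $f_1$ is $f_1=xy^2-2yxy+y^2x=(\operatorname{ad}y)^{2}(x)$ itself, and the leading $\mathbb Z^2$-homogeneous part of $f_2$ is $LH(f_2)=x^3y-3x^2yx+3xyx^2-yx^3=(\operatorname{ad}x)^{3}(y)$ (its bidegree-$(3,1)$ component); note that this stays nonzero for all $u,v,w$, which is why this $\mathbb Z^2$-grading is the ``appropriate'' one. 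I then compute the reduced Gr\"obner basis $\mathcal G$ of $I=(f_1,f_2)$ with respect to $\prec_{\mathbb Z^2}$: the only overlap of $\operatorname{LM}(f_1)$ with $\operatorname{LM}(f_2)$ is on the word $xy$, whose $S$-polynomial reduces to an element of leading word $x^2yxy$; one checks that Buchberger's procedure terminates, so that $\mathcal G$ is finite and $H_{\mathcal J}(t)=H_{k\langle X\rangle/(\operatorname{LM}(\mathcal G))}(t)=\dfrac{1}{(1-t)^2(1-t^2)(1-t^3)}$. Since $\operatorname{LM}(LH(g))=\operatorname{LM}(g)$ for every $g\in\mathcal G$, the algebra $B:=k\langle X\rangle/(LH(\mathcal G))$ has the same Hilbert series, and the leading homogeneous parts of the remaining members of $\mathcal G$ already lie in the ideal generated by $LH(f_1)$ and $LH(f_2)$, so
\[
B=k\langle x,y\rangle\big/\big((\operatorname{ad}y)^{2}(x),\ (\operatorname{ad}x)^{3}(y)\big).
\]
These two relations are precisely the Serre relations of Cartan type $C_2$, so $B=U(\mathfrak n^+)$ is the universal enveloping algebra of the $4$-dimensional positive nilradical of the simple Lie algebra of type $C_2$, graded by root height; its PBW basis accounts for the Hilbert series above (positive roots of heights $1,1,2,3$). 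A positively graded enveloping algebra of a finite-dimensional nilpotent Lie algebra is connected graded Artin--Schelter regular --- the deformed Chevalley--Eilenberg complex is a length-$4$ minimal free resolution of the trivial module, with Gorenstein parameter $1+1+2+3=7$ --- and it is moreover strongly noetherian, Auslander regular and Cohen--Macaulay. Hence $B$ is AS-regular of global dimension $4$, and Theorem~\ref{first theo} gives that $\mathcal J=\mathcal J(u,v,w)$ is AS-regular; since $H_{\mathcal J}=H_B$ its global dimension is $4$, and the same $\mathbb Z^2$-filtration argument transports strong noetherianity, Auslander regularity and Cohen--Macaulayness to $\mathcal J$.

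\emph{The classification.} Let $A$ be any connected graded AS-regular algebra of global dimension $4$, minimally generated by two degree-one elements, whose Frobenius data is of Jordan type. By the $A_\infty$-theory of \cite{LPWZ2}, the Yoneda algebra $E=\Ext_A^{\ast}(k,k)$ is a Frobenius $A_\infty$-algebra with $\dim_k E=(1,2,2,2,1)$ in cohomological degrees $0,\dots,4$, and $A$ together with a presentation is recovered as the $A_\infty$-Koszul dual of $E$, its defining relations being dual to $\sum_{n\ge2}m_n$. The plan is: (i) fix bases of the $E^i$ adapted to the Frobenius pairing, so that ``Jordan type'' becomes the statement that the $2\times2$ structure matrix attached to the Frobenius data is a single Jordan block; (ii) since --- as noted in the Introduction --- the generic-constraints condition of \cite{LPWZ2} fails here, solve directly the Stasheff identities together with the Frobenius compatibilities in this degenerate case, and show that up to $A_\infty$-isomorphism (equivalently, up to isomorphism of $A$) the products $m_n$ are pinned down by a three-parameter family of admissible values; (iii) dualize to read off the relations: one finds a degree-$3$ relation that, after a linear change of variables, is $f_1=(\operatorname{ad}y)^{2}(x)$, and a degree-$4$ relation that is $f_2=f_2(u,v,w)$ with $(u,v,w)$ the residual parameters. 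Thus $A\cong\mathcal J(u,v,w)$. Conversely, by the first part every $\mathcal J(u,v,w)$ is AS-regular of global dimension $4$ and two-generated, and computing its Ext-algebra from the Gr\"obner resolution of the first part shows that its Frobenius data is of Jordan type; one then verifies that these data determine the algebra up to isomorphism, so that the Jordan case is, up to isomorphism, the single algebra $\mathcal J$.

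\emph{The main obstacle.} This is step (ii). With the generic-constraints condition invalid, one must solve by hand the nonlinear, sign-sensitive system produced by the higher Stasheff identities and the Frobenius structure --- keeping track of all the Massey-product / $m_n$ coefficients --- and then carry out the dualization to relations carefully enough to recognise the output as precisely the displayed $f_1,f_2$; matching against the Gr\"obner description of the first part is what makes this feasible. A secondary difficulty, in the first part, is proving termination of Buchberger's algorithm on $\{f_1,f_2\}$ and --- because $f_2$ is not $\mathbb Z^2$-homogeneous --- computing the leading homogeneous parts of $\mathcal G$ correctly and confirming that $LH(f_1)$ and $LH(f_2)$ alone suffice, i.e.\ $B=U(\mathfrak n^+)$. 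Finally, for the ``unique up to isomorphism'' conclusion one exploits that every graded isomorphism among these algebras is triangular, $x\mapsto\alpha x+\beta y$, $y\mapsto\delta y$ (forced because $f_1$ spans the degree-$3$ part of the ideal), which both governs the isomorphisms inside the family $\{\mathcal J(u,v,w)\}$ and underlies the description of the automorphism group of $\mathcal J$.
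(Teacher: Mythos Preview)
Your regularity argument is essentially the paper's: the same $\mathbb Z^2$-grading, the same Gr\"obner computation (the paper finds $\mathcal G=\{f_1,f_2,f_3\}$ with $\operatorname{LM}(f_3)=x^2yxy$ and checks $LH(f_3)\in(LH(f_1),LH(f_2))$), and the same associated graded algebra. Your identification of $B$ as $U(\mathfrak n^+)$ for type $C_2$ is a pleasant alternative to the paper's citation of $D(-2,-1)$ from \cite{LPWZ2}; it is the same algebra, and the Chevalley--Eilenberg argument is a clean substitute for that reference.

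There is a genuine gap in your classification step (ii). The Stasheff identities together with the Jordan-type Frobenius data do \emph{not} cut the solutions down to a single three-parameter family. In the paper's analysis the equations split into two cases according to whether a certain quantity $M=g^4c_{11}+c_{21}+g^3c_{31}$ vanishes. Case~1 ($M=0$) produces five further one- or two-parameter families of candidate relations (labelled (S1)--(S5)), none of which is $\mathcal J$; Case~2 ($M\neq0$, forcing $g=p=1$) gives exactly (S6), which is $\mathcal J(u,v,w)$. The uniqueness assertion therefore requires \emph{eliminating} the Case~1 families, and this is not an $A_\infty$ argument: the paper computes enough of their Gr\"obner bases to see that their Hilbert series disagree with $(1-t)^{-2}(1-t^2)^{-1}(1-t^3)^{-1}$ (for (S2)--(S4) the discrepancy first appears only at the $t^7$ coefficient, via Lemma~4.2). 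Your outline asserts that the $A_\infty$ system already pins down three parameters, which it does not; you must budget for this elimination step.

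One minor correction: you claim the $\mathbb Z^2$-filtration transfers Cohen--Macaulayness from $B$ to $\mathcal J$. The paper explicitly cannot prove this transfer in general (see the remark following Corollary~\ref{nsnac1}) and instead establishes Cohen--Macaulayness of $\mathcal J$ separately by exhibiting it as an Ore extension $k[y,z_1,z_2][x;\mathrm{id},\delta]$. This does not affect the statement under discussion, but you should not rely on that transfer.
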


As the criterion for Artin-Schelter regularity, we provide a similar method to recognize the ring-theoretic and homological properties of an Artin-Schelter regular algebra from the known one.
\begin{theorem}
Let $\mathcal{J}$ be the Artin-Schelter regular algebra showed in the theorem above. Then
\begin{enumerate}
\item $\mathcal{J}$ is strongly noetherian, Auslander regular and Cohen-Macaulay;
\item The automorphism group of $\mathcal{J}$ is
 $\left\{
{\small\left(
\begin{array}{cc}
 a&b\\
0&a
\end{array}
\right)}\;
\Big|\;\; a\in k\backslash\{0\},\, b\in k
\right\}$;
\item $\mathcal{J}$ has two classes of point modules up to isomorphism.
\end{enumerate}
\end{theorem}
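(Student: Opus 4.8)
\noindent\emph{Proof strategy.} The three assertions call for separate arguments; I would deduce \emph{(1)} by transporting structure from the auxiliary $\mathbb{Z}^r$-graded algebra of Theorem~\ref{first theo}, and prove \emph{(2)}, \emph{(3)} directly from the presentation. For \emph{(1)}, set $B:=k\langle X\rangle/(LH(\mathcal{G}))$ with $X$, $\mathcal{G}$ and the $\mathbb{Z}^r$-grading as in Theorem~\ref{first theo}, and filter $\mathcal{J}$ by the filtration $F$ induced by this $\mathbb{Z}^r$-grading (collapsed to an exhaustive $\mathbb{N}$-filtration by a positive functional on $\mathbb{Z}^r$). Gr\"obner basis theory gives $\mathrm{gr}^{F}\mathcal{J}\cong B$, where $B$ is the connected graded AS-regular algebra of global dimension $4$, with Hilbert series $\tfrac{1}{(1-t)^{2}(1-t^{2})(1-t^{3})}$, occurring in the proof of Theorem~\ref{first theo}; I expect $B$ to be an iterated Ore extension of $k$ (equivalently a twist of a weighted polynomial ring), hence a strongly noetherian, Auslander regular, Cohen--Macaulay domain by inspection. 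One then lifts along $F$: since $F$ is Zariskian with noetherian associated graded, $\mathcal{J}$ is noetherian; for any commutative noetherian $k$-algebra $C$ the ring $\mathcal{J}\otimes_{k}C$ is filtered with $\mathrm{gr}(\mathcal{J}\otimes_{k}C)\cong B\otimes_{k}C$ noetherian, whence $\mathcal{J}$ is strongly noetherian; and Auslander regularity and the Cohen--Macaulay property pass from $\mathrm{gr}^{F}\mathcal{J}$ to $\mathcal{J}$ by the transfer theorems for Zariskian filtrations (Bj\"ork; Li--Van Oystaeyen), using that Gelfand--Kirillov dimension and grades of finitely generated modules are preserved by $\mathrm{gr}^{F}$.

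For \emph{(2)}, since $\mathcal{J}$ is connected graded and minimally generated by $V=kx\oplus ky$ in degree one, it suffices to determine the graded automorphisms (which is what the stated matrix group records; $\mathcal{J}$ also admits non-graded automorphisms, e.g.\ $x\mapsto x+1,\ y\mapsto y$). A graded automorphism is a $g\in GL(V)$ stabilising the relation space, and as $\deg f_{1}=3$ while $\deg f_{2}=4$, this amounts to $g\cdot f_{1}\in kf_{1}$ inside $V^{\otimes 3}$ together with $g\cdot f_{2}\in Vf_{1}+f_{1}V+kf_{2}$ inside $V^{\otimes 4}$. Writing $f_{1}=[[x,y],y]$ (with $[a,b]=ab-ba$) and $g\colon x\mapsto px+qy,\ y\mapsto rx+sy$, one computes $g\cdot f_{1}=(ps-qr)\bigl(r\,[[x,y],x]+s\,[[x,y],y]\bigr)$; since $[[x,y],x]$ and $[[x,y],y]$ are linearly independent in $V^{\otimes 3}$, the first condition forces $r=0$, and feeding the resulting upper-triangular $g$ into the degree-$4$ condition is a bounded linear computation which I expect to force $p=s$. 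Conversely every matrix $\left(\begin{smallmatrix}a&b\\ 0&a\end{smallmatrix}\right)$ with $a\neq 0$ manifestly preserves $(f_{1},f_{2})$, so the automorphism group is exactly the group in the statement.

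For \emph{(3)}, write a point module as $M=\bigoplus_{i\ge 0}ke_{i}$ with $xe_{i}=\alpha_{i}e_{i+1}$, $ye_{i}=\beta_{i}e_{i+1}$ and $p_{i}=[\alpha_{i}:\beta_{i}]\in\mathbb{P}^{1}$; being cyclic, $M$ is a $\mathcal{J}$-module precisely when $f_{1}\cdot e_{i}=0$ and $f_{2}\cdot e_{i}=0$ for every $i$, i.e.\ when $(p_{i})$ satisfies the multilinearisations $\mathcal{M}(f_{1})$ (trilinear) and $\mathcal{M}(f_{2})$ (quadrilinear) on each window. I would then (i) solve $\mathcal{M}(f_{1})=0$, noting that off the locus $p_{i}=[1:0]$ the affine coordinates $t_{i}=\alpha_{i}/\beta_{i}$ obey $t_{i+2}-2t_{i+1}+t_{i}=0$, so the sequence is pinned down by $(p_{0},p_{1})$; (ii) substitute into $\mathcal{M}(f_{2})=0$ and simplify to describe the point scheme $E\subseteq\mathbb{P}^{1}$ and the map $\sigma$ of $E$ whose orbits are the admissible sequences; and (iii) read off from $E$ the point modules, finding as asserted exactly two isomorphism classes --- one being $\mathcal{J}/\mathcal{J}y\cong k[x]$ (on which $y$ acts by zero) and one further class --- and confirming via the automorphism action of \emph{(2)} that they are non-isomorphic and exhaust all point modules. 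The principal obstacle is \emph{(3)}(ii)--(iii): because the defining relations sit in two different degrees, the Artin--Tate--Van den Bergh point-scheme formalism does not apply off the shelf, so one must track by hand which solutions of $\mathcal{M}(f_{2})=0$ extend to genuine infinite admissible sequences and then pass correctly to isomorphism; by contrast the lifting in \emph{(1)} is routine once $B$ is identified, and \emph{(2)} reduces to bounded linear algebra.
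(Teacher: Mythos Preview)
Your arguments for (2) and (3) match the paper's: both compute directly from the presentation, forcing $r=0$ (the paper's $b_1=0$) from $\sigma\cdot f_1\in kf_1$ and then $p=s$ (the paper's $a_1=b_2$) from the degree-$4$ condition, and both parametrise point modules by sequences in $\mathbb{P}^1$ subject to the multilinearised relations, solving the resulting recursions by hand. For (1), the strongly noetherian and Auslander regular parts also agree, both lifting from $B=G^2(\mathcal{J})\cong D(-2,-1)$ via the filtration.

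The one genuine difference is Cohen--Macaulay. The paper remarks explicitly that it could \emph{not} establish CM transfer in its $\mathbb{Z}^r$-filtered framework (only the grade inequality $j_A(M)\geq j_{G^r(A)}(G^r(M))$ is proved there), and instead exhibits $\mathcal{J}$ concretely as an Ore extension $k[x_2,z_1,z_2][x_1;\mathrm{id},\delta]$ of a commutative polynomial ring, with $z_1=[x_1,x_2]$ and $z_2=[x_1,z_1]$, whence CM follows from the standard Ore-extension lemma. Your route---collapse the $\mathbb{Z}^2$-grading to an $\mathbb{N}$-filtration by a positive weight and invoke the classical Zariskian transfer theorems of Bj\"ork and Li--Van Oystaeyen---is a legitimate alternative; just be careful that ``grades are preserved'' overstates what is directly available: only $j_{\mathcal{J}}(M)\geq j_B(\mathrm{gr}\,M)$ is immediate, and CM for $\mathcal{J}$ then follows by combining this inequality with GK-dimension equality and the Auslander condition, not from grade equality per se. The paper's Ore-extension argument buys a concrete structural description of $\mathcal{J}$ (and in fact re-proves all of (1) in one stroke); your filtration argument is more portable and makes clear that the paper's CM obstruction is an artefact of working with $\mathbb{Z}^r$- rather than $\mathbb{N}$-filtrations.
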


Here is an outline of the paper. In Section 1, we review some basic definitions of Artin-Schelter regular algebras, $A_\infty$-algebras and $\mathbb{Z}^r$-filtered algebras. The links about regularity between $\mathbb{Z}^r$-filtered algebras and associated $\mathbb{Z}^r$-graded algebras are considered in Section 2. The next two sections are devoted to an application of the criterion to the classification of Artin-Schelter regular algebras of Jordan type. Properties of the classified result are presented in Section 5.

Throughout the paper, $k$ is a fixed algebraically closed field of characteristic $0$. The set of natural numbers $\mathbb{N}=\{0, 1, 2,\cdots\}$. Unless otherwise stated, graded means $\mathbb{Z}$-graded, the tensor product $\otimes$ means $\otimes_k$. For simplicity we only consider graded algebras that are generated in degree 1. We also set $\mathbb{Z}^r=\underbrace{\mathbb{Z}\times\cdots\times\mathbb{Z}}_r$ with the standard basis $\varepsilon_i=(0,\cdots,1,\cdots,0)$ for $i=1, 2,\cdots, r$.

\section{Preliminaries}
In this section we recall the definitions of Artin-Schelter regular algebras, $A_\infty$-algebras, and $\mathbb{Z}^r$-filtered algebras as well as some fundamental consequents in preparation for the classification.

A \emph{norm map} on $\mathbb{Z}^r$ we mean the map $\|\cdot\|: \mathbb{Z}^r\to \mathbb{Z}$ which sends $\alpha=(a_1, \cdots, a_r)$ to $\sum_{i=1}^ra_i$. An \emph{admissible ordering} $<$ related to the norm map is a total ordering such that $\|\alpha_1\|<\|\alpha_2\|$ implies $\alpha_1<\alpha_2$, and $\alpha_1<\alpha_2$ implies $\alpha_1+\alpha_3<\alpha_2+\alpha_3$ for any $\alpha_1,\alpha_2,\alpha_3\in\mathbb{Z}^r$.

Let $A=\bigoplus_{\alpha\in \mathbb{Z}^r}A_\alpha$ be a $\mathbb{Z}^r$-graded algebra. For a homogenous element $a\in A_\alpha$, we call $\alpha$ and $\|\alpha\|$ the \emph{degree} and \emph{total degree} of $a$, denoted by $\deg a$ and $\mathrm{tdeg\,}a$, respectively. $A$ is called \emph{connected} if $A_\alpha=0$ for all $\alpha\notin \mathbb{N}^r$ and $A_0=k$. A connected $\mathbb{Z}^r$-graded algebra $A$ is called \emph{properly} if $A$ is generated by $\bigoplus_{i=1}^rA_{\varepsilon_i}$. We denote by $\mathrm{GrMod\,}A$ the category of $\mathbb{Z}^r$-graded left $A$-modules with morphisms of $A$-homomorphisms preserving degrees, and by $\mathrm{grmod\,}A$ the category of finitely generated $\mathbb{Z}^r$-graded left $A$-modules. The categories of $\mathbb{Z}^r$-graded right $A$-modules, denoted by $\mathrm{GrMod\,}A^o$ and $\mathrm{grmod\,}A^o$ respectively, are defined analogously. When $r=1$, it goes back to the usual graded situation.

Given a $\mathbb{Z}^r$-graded $A$-module $M=\bigoplus_{\alpha\in \mathbb{Z}^r}M_\alpha$ and $\beta\in \mathbb{Z}^r$, its \emph{shift}  is $M(\beta)\in\mathrm{GrMod\,}A$ defined by $M(\beta)_\alpha=M_{\alpha+\beta}$ for any $\alpha\in\mathbb{Z}^r$.
For $M, N\in\mathrm{GrMod\,}A$, we write $\uHom_A(M,N)=\bigoplus_{\alpha\in\mathbb{Z}^r}\Hom_{\text{GrMod}\,A}(M, N(\alpha))$ and $\uExt^i_A(M,N)=\bigoplus_{\alpha\in\mathbb{Z}^r}\Ext^i_{\text{GrMod}\,A}(M, N(\alpha))$.

\medskip
\subsection{Artin-Schelter regular algebras}~

The following definition is originally due to Artin and Schelter \cite{AS}.
\begin{definition}
A connected $\mathbb{Z}^r$-graded algebra $A$ is called \emph{Artin-Schelter regular} (AS-regular, for short) of dimension $d$ if the following three conditions hold:
\begin{enumerate}
\item[(AS1)] $A$ has finite global dimension $d$;
\item[(AS2)] $A$ has finite Gelfand-Kirillov dimension;
\item[(AS3)] $A$ is Gorenstein; that is, for some $l\in\mathbb{Z}^r$,
\begin{eqnarray*}
\uExt^i_A(k,A)=
\begin{cases}
k(l)&\text{if}~i=d,\cr
0&\text{if}~i\neq d,
\end{cases}
\end{eqnarray*}
where $l$ is called \emph{Gorenstein parameter}.
\end{enumerate}
\end{definition}

The following proposition was originally proved for $\mathbb{Z}$-graded algebras, and holds true in our $\mathbb{Z}^r$-setting.

\begin{proposition}\cite[Proposition 3.1]{SZ}
Let $A$ be a $\mathbb{Z}^r$-graded AS-regular  algebra of dimension $d$ with Gorenstein parameter $l$, then the minimal projective resolution of $_Ak$ is
$$
0\rightarrow P_d\rightarrow  P_{d-1}\rightarrow\cdots\rightarrow
P_1\rightarrow A\rightarrow _Ak\rightarrow 0.
$$
where $P_j=\bigoplus_{i=1}^{s_{j}}Ae_i^j$ is a finitely generated free module on basis $\{e_i^j\}_{i=1}^{s_{j}}$ with $\deg e_i^j\in\mathbb{N}^r$ for all $1\leq j\leq d-1$, and $P_d=Ae_d$ is a free module on $e_d$ with $\deg e_d=l$.
\end{proposition}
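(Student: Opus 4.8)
This is \cite[Proposition 3.1]{SZ} in the case $r=1$, and the plan is to check that its proof uses nothing about the grading group beyond the fact that $A$ is \emph{connected}. From $A_\alpha=0$ for $\alpha\notin\mathbb{N}^r$ and $A_0=k$ one gets, exactly as in the $\mathbb{Z}$-graded situation: the augmentation ideal $\mathfrak{m}=\bigoplus_{\alpha\ne0}A_\alpha$ is the unique graded maximal ideal; a graded Nakayama lemma holds for finitely generated graded (left or right) modules, which are bounded below in total degree; hence every finitely generated graded module $M$ admits a minimal free resolution, unique up to graded isomorphism, whose $j$-th term has rank $\dim_k\Tor^A_j(k,M)$ with its generators sitting in the degrees that occur in $\Tor^A_j(k,M)$; and $\mathrm{gl.dim}\,A=\mathrm{pd}_A\,{}_Ak=\mathrm{pd}_{A^o}\,k_A$. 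None of these statements refers to $r$.

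Granting this, I would argue as follows. By (AS1) the minimal free resolution $P_\bullet\to{}_Ak$ has length exactly $d$; its terms are finitely generated free, with $P_0=A$, and since $A$ is connected $\Tor^A_j(k,k)$ lives in degrees in $\mathbb{N}^r$, so the degree of every generator of every $P_j$ lies in $\mathbb{N}^r$. Applying $\uHom_A(-,A)$ to $P_\bullet$ and using that each $P_j$ is finitely generated free gives a cochain complex of finitely generated free right $A$-modules $P_j^{*}:=\uHom_A(P_j,A)$ (with $P_0^{*}=A^o$ and each $P_j^{*}$ a finite direct sum of shifts of the rank-one free right module) whose $j$-th cohomology is $\uExt^j_A(k,A)$. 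By (AS3) this cohomology vanishes for $j\ne d$ — in particular $\uHom_A(k,A)=0$ — and equals $k(l)$ for $j=d$, so the complex prolongs to an exact sequence
$$
0\longrightarrow A^o\longrightarrow P_1^{*}\longrightarrow\cdots\longrightarrow P_d^{*}\longrightarrow k(l)\longrightarrow 0 .
$$
Read backwards, this is a free resolution of $k(l)\in\mathrm{grmod}\,A^o$ of length $d$ whose top (homological degree $d$) term is the rank-one free module $A^o$; it is minimal, because transposing a matrix with entries in $\mathfrak{m}$ again has entries in $\mathfrak{m}$, hence it is \emph{the} minimal free resolution of $k(l)$.

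It remains to transport this back to the left. Since the shift $(l)$ commutes with $\Tor$ and $\Tor^{A^o}_j(k_A,{}_Ak)\cong\Tor^A_j(k_A,{}_Ak)$ canonically as graded vector spaces, the computation above gives $\Tor^A_d(k,k)\cong k(-l)$, one-dimensional and concentrated in degree $l$, whereas minimality of $P_\bullet$ gives $\Tor^A_d(k,k)=k\otimes_A P_d=\bigoplus_i k(-\deg e_i^d)$. Comparing the two, $P_d$ is free of rank one and its generator $e_d$ satisfies $\deg e_d=l$, which is the assertion. I do not expect a genuine obstacle here: minimal free resolutions, the $\uHom_A(-,A)$-duality between finitely generated free left and right modules, and the symmetry of $\Tor$ all carry over verbatim from $\mathbb{Z}$ to $\mathbb{Z}^r$. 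The one step that warrants a careful sentence — and the only place where connectedness, rather than mere $\mathbb{N}^r$-support, is actually used — is the graded Nakayama lemma, and with it the existence and uniqueness of minimal free resolutions over $\mathbb{Z}^r$-connected algebras.
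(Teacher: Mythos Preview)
The paper does not actually prove this proposition: it is stated with a citation to \cite[Proposition~3.1]{SZ} and the one-line remark that the original $\mathbb{Z}$-graded proof ``holds true in our $\mathbb{Z}^r$-setting.'' There is therefore nothing to compare against beyond that assertion.

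Your sketch is a faithful reconstruction of the Stephenson--Zhang argument and correctly identifies that the only inputs are connectedness (for graded Nakayama and minimal resolutions), the duality $\uHom_A(-,A)$ on finitely generated frees, and the symmetry of $\Tor$, none of which depends on $r$. One point you pass over a bit quickly is the \emph{finite generation} of each $P_j$: this is part of the statement, and it is not automatic for the minimal resolution of $k$ over an arbitrary connected graded algebra. In the Stephenson--Zhang argument it is deduced from the Gorenstein condition itself (the vanishing of $\uExt^j_A(k,A)$ for $j<d$ forces finite-dimensionality of $\Tor^A_j(k,k)$ by working from the top of the resolution downward via the dual complex), and your dualization step implicitly uses that $P_j^{*}$ is again free of finite rank. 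It would be worth one sentence making that inductive step explicit rather than assuming it at the outset. Otherwise the proposal is sound and, if anything, more detailed than what the paper offers.
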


Artin and Schelter in \cite{AS} conjecture that all AS-regular algebras are noetherian. The examples of AS-regular algebras found so far are in their guess. It seems a fundamental assumption for AS-regular algebras. With this assumption, some abstract properties have been proved in small dimensions. For example, any noetherian connected graded AS-regular algebra of global dimension $4$ and GK$\dim A=4$ is a domain (see \cite[Theorem 3.9]{ATV2}). Hence, in the following we assume that AS-regular algebras are domains.

As the paper \cite{LPWZ2} observes, the graded AS-regular algebras of global dimension 4 which are domains have three resolution types as they named $(14641)$, $(13431)$ and $(12221)$ according to the number of generators in degree 1.

In Sections 3--5, we will focus on the connected graded algebras $A$ of type $(12221)$ whose minimal resolution of trivial module $_Ak$ is
\begin{equation*}\label{resolution}
0\rightarrow A(-7)\rightarrow A(-6)^{\oplus2}\rightarrow A(-4)\oplus A(-3)\rightarrow A(-1)^{\oplus2}\rightarrow A\rightarrow _Ak\rightarrow 0\tag{$*$}
\end{equation*}
and the Hilbert series is
\begin{equation*}
H_A(t)=\frac{1}{(1-t)^3(1-t^2)(1-t^3)}.
\end{equation*}

In some literatures, the condition $\mathrm{(AS2)}$ is not required in the definition of Artin-Schelter regular algebra. The following theorem is a pivotal point in the classification which was proved by using $A_\infty$-algebra method.
\begin{theorem}\label{ASFrobenius}
$($\cite[Theorem 12.5]{LPWZ1}, \cite[Corollary D]{LPWZ4}$)$
Let $A$ be a connected graded algebra, and let $E(A):=\Ext^*_A(k, k)$ be the Yoneda algebra of $A$. Then $A$ satisfies $\mathrm{(AS1)}$ and $\mathrm{(AS3)}$ if and only if $E(A)$ is Frobenius.
\end{theorem}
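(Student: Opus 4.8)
The natural approach is to run the $A_\infty$-Koszul-duality dictionary between $A$ and $E(A)$; the conclusion, being Frobenius, involves only the associative product of $E(A)$, but (as noted below) the proof seems to need the full $A_\infty$-structure. Fix a minimal graded free resolution $P_\bullet\to {}_Ak$. By minimality every differential has entries in $A_{\ge 1}$, so $\uHom_A(P_\bullet,k)$ has zero differential and $E^i(A)=\uHom_A(P_i,k)$ for every $i$; since each $P_i$ is finitely generated, $E(A)$ is finite dimensional exactly when $P_\bullet$ is a finite resolution, i.e. when ${}_Ak$ has finite projective dimension, which for connected graded $A$ is the same as finite global dimension. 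Hence $\mathrm{(AS1)}$ holds if and only if $E(A)$ is finite dimensional, in which case $E(A)=\bigoplus_{i=0}^{d}E^i(A)$ with $E^0(A)=k$ and $d$ the projective dimension of ${}_Ak$. This disposes of the finiteness halves of both implications and reduces the theorem, under $\mathrm{(AS1)}$, to the equivalence of $\mathrm{(AS3)}$ with the conditions ``$\dim_k E^d(A)=1$ and every multiplication pairing $E^i(A)\times E^{d-i}(A)\to E^d(A)$ is nondegenerate'', which is the standard description of the Frobenius property for a graded algebra concentrated in cohomological degrees $0,\dots,d$ with $E^0(A)=k$.

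To prove that equivalence I would exploit the self-duality of the minimal resolution. Applying $\uHom_A(-,A)$ to $P_\bullet$ produces a minimal complex $Q^\bullet$ of finitely generated graded free right $A$-modules, $Q^i=\uHom_A(P_i,A)$, with $H^\bullet(Q^\bullet)=\uExt^\bullet_A(k,A)$. Condition $\mathrm{(AS3)}$ with parameter $l$ asserts precisely that $Q^\bullet$, reindexed by $i\mapsto d-i$ and twisted by $-l$, is a minimal graded free resolution of $k_A$; since minimal resolutions are unique and $\Tor^A_\bullet(k,k)$ is balanced, this is equivalent to the self-duality $\uHom_A(P_i,A)\cong P_{d-i}(l)$ of the minimal resolution of ${}_Ak$ for all $i$, with in particular $P_d\cong A(-l)$. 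Granting this, unwinding the definition of the Yoneda product exhibits $\uExt^\bullet_A(k,A)$ as the graded $k$-dual of $E(A)$, twisted by $l$ and with the cohomological grading reversed, compatibly with the Yoneda action of $E(A)$, and identifies the attendant evaluation form with the multiplication pairings $E^i(A)\times E^{d-i}(A)\to E^d(A)\cong k$; nondegeneracy is then automatic. Conversely, a Frobenius form on $E(A)$ runs this backwards, making $\uExt^\bullet_A(k,A)$ the twisted reversed $k$-dual of $E(A)$ as a Yoneda module, which forces $Q^\bullet$ to be a shifted minimal resolution of $k_A$ and hence yields $\mathrm{(AS3)}$ with $l$ read off from the socle degree of $E(A)$.

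The point where real work is needed --- and which I expect to be the main obstacle --- is that the manipulations just described are controlled only at the level of ranks, internal degrees, vector-space pairings and Yoneda module structures, whereas a clean proof of the equivalence wants the self-duality of $P_\bullet$ realized as a genuine self-duality at the level of complexes of $A$-modules, compatible with all of this data. This is exactly the bookkeeping carried out by the $A_\infty$-enhanced Koszul duality of Lu--Palmieri--Wu--Zhang: one endows $E(A)$ with its minimal $A_\infty$-structure, reconstructs $A$ up to quasi-isomorphism as the Koszul dual $A_\infty$-algebra, and transports the Gorenstein datum of $A$ to a Frobenius form on $E(A)$ and back, the chain-level self-duality falling out of the self-duality of the (co)bar construction. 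Everything else in the argument is the elementary homological algebra of minimal resolutions indicated above.
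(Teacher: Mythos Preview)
The paper does not prove this statement: Theorem~\ref{ASFrobenius} is quoted verbatim as a known result from \cite[Theorem 12.5]{LPWZ1} and \cite[Corollary D]{LPWZ4}, with no argument supplied. There is therefore nothing in the paper to compare your sketch against.

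That said, your outline is a faithful summary of the strategy in those references. You correctly isolate the elementary content---(AS1) is equivalent to $E(A)$ being finite dimensional via minimality of the resolution, and (AS3) is a self-duality statement for $P_\bullet$---and you correctly flag that the nontrivial step is upgrading the numerical/pairing self-duality to a chain-level isomorphism, which is exactly what the $A_\infty$-Koszul duality of Lu--Palmieri--Wu--Zhang accomplishes. If you intend this as an independent proof rather than a pointer to the literature, the paragraph beginning ``Granting this, unwinding the definition of the Yoneda product\ldots'' is where the real work hides: the identification of $\uExt^\bullet_A(k,A)$ with the twisted dual of $E(A)$ \emph{as an $E(A)$-module}, and the converse passage from a Frobenius form to exactness of $Q^\bullet$ away from degree $d$, both require more than is written. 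But as a sketch of the cited argument it is accurate.
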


\medskip
\subsection{$A_\infty$-algebras}~

The definition and notation of the $A_\infty$-algebra are introduced  in this subsection briefly.  We refer to \cite{Ke1, LPWZ2} for the details.
\begin{definition}
Let $E=\bigoplus_{i\in \mathbb{Z}} E^i$ be a $\mathbb Z$-graded $k$-vector space. $E$ is an \emph{$A_\infty$-algebra} if it is endowed with a family of graded $k$-linear maps
 $$
 m_n:~E^{\otimes n}\rightarrow E,\quad n\geq 1,
 $$
of degree $2-n$ satisfying the following \emph{Stasheff identities} $\textsf{SI(n)}$:
$$
\sum_{\substack{r+s+t=n;\\s\geq 1;\ r,t\geq 0}}(-1)^{r+st}m_{r+1+t}(id^{\otimes r}\otimes m_s\otimes id^{\otimes t})=0
$$
for all $n\geq 1$.
\end{definition}

Note that when the formulas are applied to elements, additional signs appear due to the Koszul sign rule. We assume every $A_\infty$-algebra in this article contains an identity element $1\in E^0$ which satisfies \emph{strictly unital condition}; that is,
\begin{enumerate}
\item $m_2(1, x)=x$ and $m_2(x, 1)=x$, for every $x\in E$;
\item $m_n(x_1, \cdots, x_n)=0$, if $x_i=1$ for some $i$ and $n\neq 2$.
\end{enumerate}
\vspace{2mm}

Now let $A$ be a connected graded algebra, then the \emph{Yoneda algebra} $E(A)$ is bi-graded naturally: one is the \emph{homological degree}, written as superscript, and the other one is \emph{Adams degree}, written as subscript, the latter is induced by the grading of $A$. It is a basic fact that $E(A)$ can be viewed as the cohomology algebra of some differential graded algebra.

For any differential graded algebra $D$, there is a canonical $A_\infty$-algebra structure on its cohomology algebra $HD$ which is unique in the sense of $A_\infty$-isomorphisms. This is a key consequent in $A_\infty$-world named ``\emph{Minimal Model Theorem}'' (see \cite{Ke1}). A concrete method of constructing the minimal model is provided in \cite{Me}. As a consequence, $E(A)$ is equipped with a natural $A_\infty$-algebra structure.
We adopt that the $A_\infty$-algebra structure in this paper are bi-graded and all multiplications $\{m_n\}$ preserve Adams degree; that is, $\deg(m_n)=(2-n, 0)$. It is a nontrivial hypothesis since such an $A_\infty$-algebra structure exists (see \cite{LPWZ3}).

We use $E(A)$ to denote both the usual associative Yoneda algebra
and the $A_\infty$-Ext-algebra with any choice of its $A_\infty$-structure. There is such a graded algebra $A$ that its associative Yoneda algebra $E(A)$ does
not contain enough information to recover the original algebra $A$; on the other
hand, the information from the $A_\infty$-algebra $E(A)$ is sufficient to recover $A$. This is the point of the following theorem:

\begin{theorem}\label{recover}
$($\cite[Corollary B]{LPWZ3}$)$ Let $A$ be a connected graded algebra which is finitely generated in degree $1$ and $E$ be the $A_\infty$-algebra $\mathrm{Ext}_A^*(k_A,k_A)$. Let $R=\bigoplus_{n\geq2}R_n$ be a minimal graded space of relations of $A$ such that $R_n\subset A_1\otimes A_{n-1}\subset A_1^{\otimes n}$. Let $i:R_n\rightarrow A_1^{\otimes n}$ be the inclusion map, and $i^\sharp$ be its k-linear dual. Then the multiplication $m_n$ of $E$ restricted to $(E^1)^{\otimes n}$ is equal to the map
$$
i^\sharp:(E^1)^{\otimes n}=(A_1^\sharp)^{\otimes n}\rightarrow R^\sharp_n\subset E^2.
$$
where $R^\sharp$ and $A_1^\sharp$ is the graded $k$-linear dual of $R$ and $A_1$ respectively.
\end{theorem}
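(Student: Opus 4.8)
The plan is to compute the bigraded space $E=\Ext^*_A(k,k)$ and its higher products through the bar construction, where the asserted identity turns into a statement dual to the deconcatenation of relations. First I would pin down $E^1$ and $E^2$. Writing $V=A_1$ and computing $\Tor^A_*(k,k)$ from the reduced bar complex $\bar B(A)=\bigoplus_{n\ge 1}\bar A^{\otimes n}$, minimality of the chosen relations yields $\Tor^A_1(k,k)=V$ and $\Tor^A_2(k,k)=R$, the Adams-degree-$n$ part of the latter being exactly $R_n$. Dualizing gives $E^1=V^\sharp=A_1^\sharp$ and $E^2=R^\sharp$, so the source and target of the map in the statement are correctly identified.

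Next I would present the $A_\infty$-structure in dual form. The complex $\bar B(A)$ is a differential graded coalgebra under deconcatenation, with differential induced by the multiplication of $A$, and $E(A)$ is the graded dual of $H(\bar B(A))$ carrying the transferred structure. Fixing a contraction of $\bar B(A)$ onto its homology $\Tor^A_*(k,k)$ — with projection, inclusion and contracting homotopy $h$ — the Minimal Model Theorem in Merkulov's explicit form endows $\Tor^A_*(k,k)$ with an $A_\infty$-coalgebra structure $\{\Delta_n\}$ whose $k$-linear dual is the $A_\infty$-algebra $E$; in particular $m_n=(\Delta_n)^\sharp$. Since both the $A_\infty$-structure and the isomorphisms relating its different choices preserve Adams degree, while $E^1$ is concentrated in a single nonzero Adams degree, the restriction $m_n|_{(E^1)^{\otimes n}}$ is independent of the choice, which is what the statement's reference to any $A_\infty$-structure requires. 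The theorem thus reduces to the single claim
$$
\Delta_n\big|_{R_n}\;=\;i:\;R_n\hookrightarrow V^{\otimes n},\qquad n\ge 2,
$$
that the transferred coproduct sends a relation to its own expansion as a tensor of generators.

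To establish this I would lift a class in $\Tor^A_2(k,k)_n=R_n$ to a cycle in $\bar B(A)$, apply the iterated reduced coproduct prescribed by the transfer formula, and project each of the $n$ resulting tensor factors onto $\Tor^A_1(k,k)=V$. Because $R_n\subset A_1\otimes A_{n-1}\subset A_1^{\otimes n}$, the element of $V^{\otimes n}$ produced in this way is precisely the image of the relation under $i$; this gives $\Delta_n|_{R_n}=i$ and hence $m_n|_{(E^1)^{\otimes n}}=i^\sharp$.

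The hard part will be this last step: carrying out the homotopy transfer so that every tree other than the plain deconcatenation tree cancels against the bar differential, and checking that the Koszul signs together with the $(-1)^{r+st}$ appearing in the Stasheff identities \SI{n} assemble into the correct overall sign. Equivalently, one must verify that the contracting homotopy $h$ interacts with the relation-lift only through the canonical deconcatenation, so that the a priori complicated transfer sum telescopes to the inclusion $i$.
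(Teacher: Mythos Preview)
The paper does not prove this theorem at all: it is stated in the preliminaries as a quotation of \cite[Corollary B]{LPWZ3} and used as a black box to read off the relations $r_3,r_4$ from the $A_\infty$-multiplications. There is therefore no ``paper's own proof'' to compare your proposal against.

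That said, your outline is essentially the strategy of the original source \cite{LPWZ3}: compute $\Tor^A_*(k,k)$ via the reduced bar complex to identify $E^1\cong A_1^\sharp$ and $E^2\cong R^\sharp$, then use homotopy transfer (Merkulov's formulas) to show that the induced $A_\infty$-coalgebra coproduct $\Delta_n$ on $\Tor^A_2(k,k)_n=R_n$ is just the inclusion into $V^{\otimes n}$, and dualize. Your identification of the ``hard part''---that all trees in the transfer sum except the plain deconcatenation tree vanish on a relation lifted from $R_n$, and that the signs come out right---is exactly where the work lies in \cite{LPWZ3}; the point there is that on elements of minimal Adams degree in $\bar A^{\otimes 2}$ the homotopy $h$ contributes trivially, so the transfer collapses. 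Your sketch is correct in spirit, but note that making the cancellation precise requires choosing the contraction carefully (e.g.\ so that $h$ raises bar-length and respects the Adams grading), which you have not specified.
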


\medskip
\subsection{$\mathbb{Z}^r$-filtered algebras and modules}~

The basic definitions and notations of $\mathbb{Z}^r$-filtered algebras and $\mathbb{Z}^r$-filtered modules are given in this subsection.  We refer to \cite{LHS} for the details.

Let $<$ be a fixed admissible ordering on $\mathbb{Z}^r$.
\begin{definition}
A $k$-algebra $B$ is called a \emph{$\mathbb{Z}^r$-filtered algebra} if there is a family $\{F_\alpha(B)\}_{\alpha\in\mathbb{Z}^r}$ of $k$-subspaces of $B$ such that
\begin{enumerate}
\item $F_\alpha(B)\subseteq F_{\alpha'}(B)$ if $\alpha<\alpha'$;
\item $F_\alpha(B)F_{\alpha'}(B)\subseteq F_{\alpha+\alpha'}(B)$, for any $\alpha,\alpha'\in \mathbb{Z}^r$;
\item $B=\bigcup_{\alpha\in\mathbb{Z}^r} F_\alpha(B)$, and $1\in F_0(B)$.
\end{enumerate}
In the definition above, the family $\{F_\alpha(B)\}_{\alpha\in\mathbb{Z}^r}$ is called a \emph{$\mathbb{Z}^r$-filtration} of $B$.
\end{definition}

An \emph{associated $\mathbb{Z}^r$-graded algebra} of $\mathbb{Z}^r$-filtered algebra $B$ is defined by
$$
G^r(B)=\bigoplus_{\alpha\in\mathbb{Z}^r}\frac{F_\alpha(B)}{F_{<\alpha}(B)},
$$
where $F_{<\alpha}(B)=\bigcup_{\alpha'<\alpha}F_{\alpha'}(B)$.

\begin{definition}
Let $B$ be a $\mathbb{Z}^r$-filtered algebra and $M$ a $B$-module. $M$ is called \emph{$\mathbb{Z}^r$-filtered} if there exists a $\mathbb{Z}^r$-filtration on it; that is, a family $\{F_\alpha(M)\}_{\alpha\in\mathbb{Z}^r}$ of $k$-subspaces of $M$ such that
\begin{enumerate}
 \item$F_\alpha(M)\subseteq F_{\alpha'}(M)$ if $\alpha<\alpha'$;
 \item$F_\alpha(B)F_{\alpha'}(M)\subseteq F_{\alpha+\alpha'}(M)$, for any $\alpha,\alpha'\in \mathbb{Z}^r$;
 \item$M=\bigcup_{\alpha\in\mathbb{Z}^r} F_\alpha(M)$.
\end{enumerate}
\end{definition}

Also there is an \emph{associated $\mathbb{Z}^r$-graded module} of $M$
$$
G^r(M)=\bigoplus_{\alpha\in\mathbb{Z}^r}\frac{F_\alpha(M)}{F_{<\alpha}(M)},
$$
where $F_{<\alpha}(M)=\bigcup_{\alpha'<\alpha}F_{\alpha'}(M)$. Notice that $G^r(M)\in\mathrm{GrMod\,}G^r(B)$.

Let $M$ be a $\mathbb{Z}^r$-filtered $B$-module. If $L$ is a submodule of $M$, there is an \emph{induced $\mathbb{Z}^r$-filtration} $\{F_\alpha (L)\}_{\alpha\in\mathbb{Z}^r}$ on $L$, where $F_\alpha(L)=L\bigcap F_\alpha(M)$. And an \emph{induced $\mathbb{Z}^r$-filtration} $\{F_\alpha (M/L)\}_{\alpha\in\mathbb{Z}^r}$ on $M/L$ is defined by $F_\alpha(M/L)=(F_\alpha(M)+L)/L$. We assume that the $\mathbb{Z}^r$-filtration on submodules and quotient modules is always the induced one in this paper.

\begin{definition}
For two $\mathbb{Z}^r$-filtered $M, N\in\mathrm{GrMod\,B}$, let $\phi$ be a $B$-homomorphism from $M$ to $N$. We say $\phi$ is \emph{$\mathbb{Z}^r$-filtered} if $\phi(F_\alpha(M))\subseteq F_\alpha(N)$ for any $\alpha\in\mathbb{Z}^r$.

Moreover, $\phi$ induces a $G^r(B)$-homomorphism from $G^r(M)$ to $G^r(N)$ denoted by $G^r(\phi)$.
\end{definition}

Besides $\phi$ is called \emph{strict} if $\phi(F_\alpha(M))=\phi(M)\bigcap F_\alpha(N)$ for all $\alpha\in\mathbb{Z}^r$. The strictness also yields $\phi(F_{<\alpha}(M))=\phi(M)\cap F_{<\alpha}(N)$ for all $\alpha$. When $r=1$, it also goes back to the usual situation.

\section{$\mathbb{Z}^r$-filtered algebras and associated $\mathbb{Z}^r$-graded algebras}\label{Z^n}

This section is devoted to set up a link between the connected graded algebra and its associated $\mathbb{Z}^r$-graded algebra. We define a $\mathbb{Z}^r$-filtration on a connected graded algebra related to a partition of the generator set, and discuss relevant homological properties. Using Gr\"obner basis theory, we prove two criterions for examining regularity and ring-properties from known algebras.

In \cite{ZZ1,ZZ2}, the authors proved the regularity and some other ring-properties of $\mathbb{N}$-filtered algebras can be examined from their associated graded algebras. However, $\mathbb{N}$-filtration is not enough to reduce
the complexity in general. Torrecillas and Lobillo studied GK-dimension and  global dimension of $\mathbb{N}^r$-filtered
algebras in \cite{TO} and \cite{TL}. To deal with general cases,
$\mathbb{Z}^r$-filtration should be much more selective. Here comes a question,
do the conclusions in \cite{ZZ1,ZZ2} still hold for some
$\mathbb{Z}^r$-filtration?

We consider 2-dimensional AS-regular algebras firstly. It only has
two types:
$$
\mathfrak{A}(q):=k\langle x,y\rangle/(xy-qyx),\quad\quad\quad \mathfrak{A}':=k\langle x,y\rangle/(yx-xy-x^2).
$$
The $\mathbb{Z}^2$-graded part $yx-xy$ in the relation of $\mathfrak{A}'$ is a special case of the relation of $\mathfrak{A}(q)$. In fact, $\mathfrak{A}(1)$ is an associated $\mathbb{Z}^2$-graded algebra of $\mathfrak{A}'$ for some $\mathbb{Z}^2$-filtration. Similar phenomenon exists in $S_2$ and $S_2'$ of 3-dimensional AS-regular algebras (see \cite{AS}). Those evidences inspire us to find a criterion in general.

\vspace{2mm}

\subsection{A $\mathbb{Z}^r$-filtration arising from a partition on the generator set}~

In order to guarantee that morphisms preserve the degrees, we need a special $\mathbb{Z}^r$-filtration. This $\mathbb{Z}^r$-filtration is natural with some homological properties.

Firstly, we give an admissible ordering on group $\mathbb{Z}^r$ as follows. Let $\alpha=(a_1, \cdots, a_r)$ and $\beta=(b_1, \cdots, b_r)$ be two arbitrary elements of $\mathbb{Z}^r$. We define $\alpha<\beta$ if one of the following two cases being satisfied
\begin{enumerate}
\item $\|\alpha\|<\|\beta\|$, or
\item $\|\alpha\|=\|\beta\|$ and there exists a $t$ ($1\le t\leq r$) such that $a_i=b_i$ for $i<t$ but $a_t<b_t$.
\end{enumerate}

Now let $A=k\langle X\rangle/I\/$ be a connected graded algebra, where $X$ is the minimal set of generators of $A$. Denote $X^*$ the free monoid generated by $X$ including $1$. There is a canonical projection
$$
\pi: k\langle X\rangle\rightarrow A.
$$

Given a positive integer $r$ ($1<r\le \#(X)$), we introduce a $\mathbb{Z}^r$-grading $\deg^r$ on $k\langle X\rangle$ as follows: let $\{X_1, X_2, \cdots, X_r\}$ be a partition of $X$, define $\deg^r x:=(\delta_{1i}, \delta_{2i}, \cdots, \delta_{ri})$ if $x\in X_i$ ($i=1, 2, \cdots, r$), where $\delta_{ij}$ is the Kronecker symbol.
Using this grading, we can get a $\mathbb{Z}^r$-filtration on $k\langle X\rangle$ defined by: $F_\alpha(k\langle X\rangle)=0$ if $\alpha<0$ and $F_\alpha(k\langle X\rangle)=\text{Span}_k\{ u\in X^*\;| \; \deg^ru \leq \alpha\}$ if $\alpha\ge 0$. That induces a $\mathbb{Z}^r$-filtration on $A$ defined by
$$
F_\alpha(A)=\pi(F_\alpha(k\langle X\rangle)),\quad \mathrm{for\; any} \; \alpha\in\mathbb{Z}^r.
$$
\textbf{Convention}: \textsl{From now on, we fix this $\mathbb{Z}^r$-filtration on $A$, and denote by $G^r(A)$ the associated $\mathbb{Z}^r$-graded algebra of $A$.}

Note that $A$ is generated in degree 1, the following is obvious.
\begin{lemma}\label{properties of Filtration on A}  The $\mathbb{Z}^r$-filtration on $A$ defined as above satisfies
\begin{enumerate}
\item$F_\alpha(A)\subseteq \bigoplus_{i\leq\|\alpha\|}A_i$ and $F_\alpha(A)\subseteq A_{\|\alpha\|}+F_{<\alpha}(A)$ for all $\alpha\in\mathbb{Z}^r$.
\item If $\alpha \notin\mathbb{N}^r$, then $F_\alpha(A)=F_{<\alpha}(A)$.
\item $G^r(A)$ is a connected  properly $\mathbb{Z}^r$-graded algebra.
\end{enumerate}
\end{lemma}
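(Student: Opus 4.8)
The plan is to derive all three parts from one elementary observation: for a monomial $u\in X^*$, the total degree $\|\deg^r u\|$ equals the length of $u$, which is precisely the Adams degree of $\pi(u)$ in $A$; moreover the ordering on $\mathbb{Z}^r$ fixed above is admissible for the norm map, so $\gamma\le\delta$ always forces $\|\gamma\|\le\|\delta\|$. I will also use that, by minimality of $X$, the set $\{\pi(x)\mid x\in X\}$ is a $k$-basis of $A_1$.

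For (1), first dispose of the case $\alpha<0$: then $F_\alpha(k\langle X\rangle)=0$, so $F_\alpha(A)=0$ and both inclusions are trivial. For $\alpha\ge 0$, the space $F_\alpha(A)$ is spanned by the elements $\pi(u)$ with $u\in X^*$ and $\deg^r u\le\alpha$. For such $u$, admissibility gives $\|\deg^r u\|\le\|\alpha\|$, and since $\|\deg^r u\|$ is the Adams degree of $\pi(u)$ we get $\pi(u)\in A_{\|\deg^r u\|}\subseteq\bigoplus_{i\le\|\alpha\|}A_i$; this is the first inclusion. For the second, split the spanning set according to whether $\deg^r u<\alpha$, in which case $\pi(u)\in F_{\deg^r u}(A)\subseteq F_{<\alpha}(A)$, or $\deg^r u=\alpha$, in which case $u$ has length $\|\alpha\|$ so that $\pi(u)\in A_{\|\alpha\|}$; either way $\pi(u)\in A_{\|\alpha\|}+F_{<\alpha}(A)$.

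For (2), if $\alpha<0$ then both sides are $0$, so assume $\alpha\ge 0$ but $\alpha\notin\mathbb{N}^r$. No monomial $u$ satisfies $\deg^r u=\alpha$, since $\deg^r u\in\mathbb{N}^r$ always; hence the spanning sets defining $F_\alpha(k\langle X\rangle)$ and $F_{<\alpha}(k\langle X\rangle)$ coincide, using that $F_{<\alpha}(k\langle X\rangle)$ is spanned by the monomials $u$ with $\deg^r u<\alpha$ because such a $u$ already lies in $F_{\deg^r u}(k\langle X\rangle)$. Applying $\pi$, which commutes with the unions defining $F_{<\alpha}$, yields $F_\alpha(A)=\pi(F_\alpha(k\langle X\rangle))=\pi(F_{<\alpha}(k\langle X\rangle))=F_{<\alpha}(A)$.

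For (3), observe that $\pi$ is a strict surjective $\mathbb{Z}^r$-filtered map, since $F_\alpha(A)=\pi(F_\alpha(k\langle X\rangle))$ by definition and hence also $F_{<\alpha}(A)=\pi(F_{<\alpha}(k\langle X\rangle))$; therefore it induces a surjection $G^r(\pi)\colon G^r(k\langle X\rangle)\twoheadrightarrow G^r(A)$ of $\mathbb{Z}^r$-graded algebras. The associated graded algebra of the free algebra with respect to this filtration is again $k\langle X\rangle$, now carrying the grading $\deg^r$, because products of monomials involve no cancellation; this algebra is connected and generated by $\bigoplus_{i=1}^r(k\langle X\rangle)_{\varepsilon_i}$. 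Hence its quotient $G^r(A)$ is supported on $\mathbb{N}^r$ and generated by $\bigoplus_{i=1}^r G^r(A)_{\varepsilon_i}$, while $G^r(A)_0=F_0(A)/F_{<0}(A)=k/0=k$ because the unit of $A$ survives and $F_{<0}(A)=0$; so $G^r(A)$ is a connected, properly $\mathbb{Z}^r$-graded algebra. There is no genuine obstacle here — the statement is a formal consequence of the definitions and of admissibility — the only points meriting a second look being the identification $G^r(k\langle X\rangle)\cong k\langle X\rangle$ and the surjectivity of $G^r(\pi)$, both immediate from $\pi(F_\alpha(k\langle X\rangle))=F_\alpha(A)$.
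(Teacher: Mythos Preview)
Your proof is correct and complete. The paper itself does not supply a proof of this lemma: it simply remarks that since $A$ is generated in degree $1$ the statement is obvious, so your argument is essentially a careful unpacking of what the authors declare immediate.
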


Let $P=\bigoplus_{i=1}^sAe_i$ be a finitely generated free $A$-module on the basis $\{e_i\}_{i=1}^s$. Take $\alpha_i\in\mathbb{Z}^r \;(i=1, 2,\cdots, s)$ such that $\|\alpha_i\|=\deg e_i$. We define a $\mathbb{Z}^r$-filtration on $P$ by
\begin{equation*}\label{filtration on free modules}
F_\alpha(P)=\bigoplus_{i=1}^s\Big( \sum_{\gamma+\alpha_i\leq\alpha}F_\gamma(A) \Big)e_i,\quad \alpha\in \mathbb{Z}^r.\tag{F1}
\end{equation*}
It is easy to check that $G^r(P)$, the associated $\mathbb{Z}^r$-graded module of $P$, is finitely generated and free as $G^r(A)$-module. We call $(P\,,\{\alpha_i\}_{i=1}^s)$ a \emph{$\mathbb{Z}^r$-filtered pair of free module}.

Conversely, let $\underline{P}=\bigoplus_{i=1}^sG^r(A)\underline{e}_i$ be a finitely generated free $G^r(A)$-module on the basis $\{\underline{e}_i\}_{i=1}^s$ with $\deg \underline{e}_i=\alpha_i$ for $i=1,2,\cdots, s$. Then there exists a $\mathbb{Z}^r$-filtered pair of free module $(P\,,\{\alpha_i\}_{i=1}^s)$ such that $G^r(P)=\underline{P}$, where $P=\bigoplus_{i=1}^sAe_i$ is a finitely generated free $A$-module. Moreover, we set $\deg e_i=\mathrm{tdeg\,}\underline{e}_i$ for $i=1, 2,\cdots, s$.

For other modules $M\in\mathrm{grmod}\,A$, some extra hypotheses of $\mathbb{Z}^r$-filtration on $M$ is required. For convenience, we introduce a $\mathbb{Z}^r$-filtration on $M$ by
\begin{equation*}\label{filtration on fg modules}
F_\alpha(M)=\sum_{i=1}^s\Big(\sum_{\gamma+\beta_i\leq\alpha}F_{\gamma}(A)\Big)\xi_i,\quad\mathrm{for\; all\; }\alpha\in\mathbb{Z}^r,\tag{F2}
\end{equation*}
where $M=\sum_{i=1}^sA\xi_i$ and $\beta_i=(0, \cdots, 0, \deg \xi_i)$ for any $i=1,2,\cdots,s$. This $\mathbb{Z}^r$-filtration on finitely generated modules is so called ``good'' $\mathbb{Z}^r$-filtration. It also assures that $G^r(M)\neq0$ if $M\neq0$. According to Lemma \ref{properties of Filtration on A}, this $\mathbb{Z}^r$-filtration on finitely generated module $M$ implies the following lemma:

\begin{lemma}\label{properties of Filtration modules}
Let $M$ be a finitely generated $A$-module.
\begin{enumerate}
\item $G^r(M)$ is a finitely generated  $G^r(A)$-module.
\item$F_\alpha(M)\subseteq \bigoplus_{i\leq\|\alpha\|}M_i$ and $F_\alpha(M)\subseteq M_{\|\alpha\|}+F_{<\alpha}(M)$ for all $\alpha\in\mathbb{Z}^r$.
\item Let $\alpha=(a_1,\cdots, a_{r-1}, a_r)\in\mathbb{Z}^r$ such that $(a_1, \cdots, a_{r-1})\notin\mathbb{N}^{r-1}$, then $F_{\alpha}(M)=F_{<\alpha}(M)$.
\item For every $i\in\mathbb{Z}$, there are only finite $\alpha\in\mathbb{Z}^r$ such that $\|\alpha\|=i$ and $F_\alpha(M)\neq F_{<\alpha}(M)$.
\end{enumerate}
\end{lemma}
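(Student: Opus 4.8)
The plan is to derive all four statements directly from the defining formula \eqref{filtration on fg modules} for the filtration on $M$, using Lemma \ref{properties of Filtration on A} for $A$ as the only external input. First I would record a simplification: since $\{F_\gamma(A)\}_{\gamma}$ is increasing and $<$ is a total ordering, the set $\{\gamma:\gamma+\beta_i\le\alpha\}$ has largest element $\alpha-\beta_i$, so the inner sum in \eqref{filtration on fg modules} collapses and
\[
F_\alpha(M)=\sum_{i=1}^{s}F_{\alpha-\beta_i}(A)\,\xi_i \qquad (\alpha\in\mathbb{Z}^r).
\]
From this one reads off $F_\gamma(A)\xi_i\subseteq F_{\gamma+\beta_i}(M)$ for all $\gamma$ and all $i$; combined with translation invariance of $<$ this gives $F_{<\gamma}(A)\xi_i\subseteq F_{<\gamma+\beta_i}(M)$, which is the mechanism that transports the properties of the filtration on $A$ to $M$.

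For (1) I would show that $G^r(M)$ is generated over $G^r(A)$ by the classes $\bar\xi_1,\dots,\bar\xi_s$ of $\xi_1,\dots,\xi_s$ (note $\xi_i\in F_{\beta_i}(M)$, so $\bar\xi_i$ lives in degree $\beta_i$). Given a homogeneous element of $G^r(M)_\alpha$, lift it to $m\in F_\alpha(M)$ and write $m=\sum_i a_i\xi_i$ with $a_i\in F_{\alpha-\beta_i}(A)$ using the displayed formula; by the definition of the $G^r(A)$-action on $G^r(M)$, the class of $m$ equals $\sum_i \bar a_i\bar\xi_i$ with $\bar a_i\in G^r(A)_{\alpha-\beta_i}$. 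Since $M$ is finitely generated, $s<\infty$, so $G^r(M)$ is finitely generated.

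For (2) I would apply Lemma \ref{properties of Filtration on A}(1) to each $F_{\alpha-\beta_i}(A)$: as $\|\alpha-\beta_i\|=\|\alpha\|-\deg\xi_i$, it gives $F_{\alpha-\beta_i}(A)\subseteq\bigoplus_{j\le\|\alpha\|-\deg\xi_i}A_j$ and $F_{\alpha-\beta_i}(A)\subseteq A_{\|\alpha\|-\deg\xi_i}+F_{<\alpha-\beta_i}(A)$; multiplying by $\xi_i$ (of degree $\deg\xi_i$), using $F_{<\alpha-\beta_i}(A)\xi_i\subseteq F_{<\alpha}(M)$ from the first paragraph, and summing over $i$ yields both inclusions. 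For (3), if $(a_1,\dots,a_{r-1})\notin\mathbb{N}^{r-1}$ then $\alpha-\beta_i=(a_1,\dots,a_{r-1},a_r-\deg\xi_i)\notin\mathbb{N}^r$ for every $i$, so Lemma \ref{properties of Filtration on A}(2) gives $F_{\alpha-\beta_i}(A)=F_{<\alpha-\beta_i}(A)$, whence $F_\alpha(M)=\sum_i F_{<\alpha-\beta_i}(A)\xi_i\subseteq F_{<\alpha}(M)\subseteq F_\alpha(M)$, so equality holds. For (4), the same reasoning shows that $\alpha-\beta_i\notin\mathbb{N}^r$ forces $F_{\alpha-\beta_i}(A)\xi_i\subseteq F_{<\alpha}(M)$; hence $F_\alpha(M)\ne F_{<\alpha}(M)$ can only happen when $\alpha-\beta_i\in\mathbb{N}^r$ for some $i$, i.e.\ $a_1,\dots,a_{r-1}\ge0$ and $a_r\ge\min_\ell\deg\xi_\ell$. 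Fixing $\|\alpha\|=n$ then confines $(a_1,\dots,a_{r-1})$ to a finite box and pins down $a_r$, leaving only finitely many such $\alpha$.

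The argument is essentially bookkeeping around the filtration, so I do not anticipate a genuine obstacle; the one point to handle carefully is the passage $F_{<\gamma}(A)\xi_i\subseteq F_{<\gamma+\beta_i}(M)$ together with the correct use of the associated-graded module structure in (1), since these are exactly what make Lemma \ref{properties of Filtration on A} usable for $M$.
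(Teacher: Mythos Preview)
Your proposal is correct and is precisely the detailed verification the paper leaves implicit: the paper does not give a proof of this lemma, merely stating that ``According to Lemma~\ref{properties of Filtration on A}, this $\mathbb{Z}^r$-filtration on finitely generated module $M$ implies the following lemma,'' and your argument is exactly the bookkeeping that fills this in. The key reduction $F_\alpha(M)=\sum_i F_{\alpha-\beta_i}(A)\xi_i$ together with the special form $\beta_i=(0,\dots,0,\deg\xi_i)$ is what makes parts~(3) and~(4) go through, and you have identified and used this correctly.
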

Lemma \ref{properties of Filtration modules} implies that the $\mathbb{Z}^r$-filtration $\{F_\alpha(M)\}_{\alpha\in\mathbb{Z}^r}$ on finitely generated module $M$ is well-ordering with respect to the order by inclusion. Furthermore, for any $m\in M$, there exists $\alpha\in\mathbb{Z}^r$ such that $F_\alpha(M)\neq F_{<\alpha}(M)$ and $m\in F_\alpha(M)$. The $\mathbb{Z}^r$-filtration on modules in $\mathrm{grmod\,}A^o$ can be defined in a similar way.

In the sequel, all $\mathbb{Z}^r$-filtration on free modules and finitely generated modules is considered to be defined as (\ref{filtration on free modules}) and (\ref{filtration on fg modules}) respectively. With these
preparations, we turn to consider the homological aspect of them.
\begin{lemma}\label{exactnees original and graded}
Suppose $\mathbb{Z}^r$-filtered modules $M_1, M_2, M_3\in\mathrm{GrMod}\,A$. Consider $\mathbb{Z}^r$-filtered sequence
\begin{equation*}
M_1\xrightarrow{\varphi_1} M_2\xrightarrow{\varphi_2} M_3,\tag{$\natural$}
\end{equation*}
with $\varphi_2\varphi_1=0$ and the associated $\mathbb{Z}^r$-graded sequence:
\begin{equation*}
G^r(M_1)\xrightarrow{G^r(\varphi_1)} G^r(M_2)\xrightarrow{G^r(\varphi_2)} G^r(M_3).\tag{$G^r(\natural)$}
\end{equation*}

\begin{enumerate}
\item If sequence $(\natural)$ is exact and $\varphi_1,\varphi_2$ are strict, then sequence $(G^r(\natural))$ is exact.
\item Suppose $M_i\in\mathrm{grmod}\,A$ (i=1, 2, 3), then sequence $(G^r(\natural))$ is exact if and only if $(\natural)$ is exact and $\varphi_1, \varphi_2$ are strict .
\end{enumerate}
\end{lemma}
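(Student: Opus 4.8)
The plan is to prove both parts by a chase inside each graded component, the one real ingredient being the well-ordering of the chain $\{F_\alpha(M)\}_\alpha$ by inclusion, recorded just after Lemma~\ref{properties of Filtration modules}. I will use repeatedly two bookkeeping facts. First, since each $\varphi_i$ is $\mathbb{Z}^r$-filtered it carries $F_{<\alpha}$ into $F_{<\alpha}$ (as $F_{<\alpha}=\bigcup_{\beta<\alpha}F_\beta$), so the induced $G^r(\varphi_i)$ are morphisms in $\mathrm{GrMod}\,G^r(A)$, hence degree preserving. Second, $\varphi_2\varphi_1=0$ gives $G^r(\varphi_2)G^r(\varphi_1)=0$. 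Consequently, in both $(\natural)$ and $(G^r(\natural))$ the inclusion $\Image\subseteq\Ker$ at the middle term is automatic, and only the reverse inclusions, together with strictness in part (2), have to be shown.

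For part (1), assume $(\natural)$ exact at $M_2$ and $\varphi_1,\varphi_2$ strict. Fix $\alpha$ and a nonzero $\bar x$ in the degree-$\alpha$ part of $\Ker G^r(\varphi_2)$; lift it to $x\in F_\alpha(M_2)$, so $\varphi_2(x)\in F_{<\alpha}(M_3)$, i.e.\ $\varphi_2(x)\in\varphi_2(M_2)\cap F_{<\alpha}(M_3)$. Strictness of $\varphi_2$ (which, as remarked after the definition of strictness, also holds for the truncation $F_{<\alpha}$) produces $x'\in F_{<\alpha}(M_2)$ with $\varphi_2(x')=\varphi_2(x)$; then $x-x'\in\Ker\varphi_2=\Image\varphi_1$ and $x-x'\in F_\alpha(M_2)$, so strictness of $\varphi_1$ gives $y\in F_\alpha(M_1)$ with $\varphi_1(y)=x-x'$. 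Reducing modulo $F_{<\alpha}$ shows that the class of $y$ in $G^r(M_1)_\alpha$ maps to $\bar x$ under $G^r(\varphi_1)$. This proves $(G^r(\natural))$ exact; note that part (1) uses neither finite generation nor the well-ordering.

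For part (2) the ``if'' direction is part (1), so assume $(G^r(\natural))$ exact. For a nonzero element $m$ of a finitely generated $\mathbb{Z}^r$-filtered module write $\nu(m)$ for the least index with $m\in F_{\nu(m)}(M)\setminus F_{<\nu(m)}(M)$ (well defined by the cited well-ordering) and $\sigma(m)\in G^r(M)_{\nu(m)}$ for its symbol. The key step is the lifting claim: \emph{for every $\alpha$ and every $z\in\Ker\varphi_2\cap F_\alpha(M_2)$ one has $z\in\varphi_1(F_\alpha(M_1))$}. I would prove it by well-founded recursion on $\nu(z)$ (the case $z=0$ being trivial). Since $\varphi_2(z)=0$, $\sigma(z)$ lies in $\Ker G^r(\varphi_2)=\Image G^r(\varphi_1)$, so $\sigma(z)=G^r(\varphi_1)(\eta)$ for some $\eta\in G^r(M_1)_{\nu(z)}$; lifting $\eta$ to $y_0\in F_{\nu(z)}(M_1)$ gives $z-\varphi_1(y_0)\in F_{<\nu(z)}(M_2)$, and $z-\varphi_1(y_0)\in\Ker\varphi_2$ because $\varphi_2\varphi_1=0$. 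Either $z=\varphi_1(y_0)$ and we stop, or $\nu(z-\varphi_1(y_0))<\nu(z)$ and we recurse; since $\{F_\beta(M_2)\}_\beta$ has no infinite strictly descending subchain the process terminates, and then $z=\varphi_1(y_0+\dots+y_N)$ with all $y_i\in F_{\nu(z)}(M_1)\subseteq F_\alpha(M_1)$. Using exhaustiveness of the filtration, the claim yields $\Ker\varphi_2\subseteq\Image\varphi_1$, so $(\natural)$ is exact at $M_2$; and it also gives $\varphi_1(M_1)\cap F_\alpha(M_2)=\Ker\varphi_2\cap F_\alpha(M_2)\subseteq\varphi_1(F_\alpha(M_1))$, whence $\varphi_1$ is strict. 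It remains to show $\varphi_2$ strict: given $t\in\varphi_2(M_2)\cap F_\alpha(M_3)$ choose $x$ with $\varphi_2(x)=t$; if $\nu(x)\le\alpha$ we are done, and otherwise $G^r(\varphi_2)(\sigma(x))$ is the class of $t$ in $G^r(M_3)_{\nu(x)}$, which vanishes because $t\in F_\alpha(M_3)\subseteq F_{<\nu(x)}(M_3)$, so $\sigma(x)\in\Ker G^r(\varphi_2)=\Image G^r(\varphi_1)$; subtracting $\varphi_1$ of a lift from $x$ strictly lowers $\nu(x)$ while keeping $\varphi_2(x)=t$, and the well-ordering forces termination at some $x\in F_\alpha(M_2)$.

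The one genuinely delicate point is the termination of these successive-approximation loops: one has to verify that each correction strictly decreases the filtration level inside the well-ordered chain $\{F_\alpha(M_i)\}$, so that only finitely many corrections occur and their accumulated sum still lies in the prescribed $F_\alpha$. This is exactly where the hypothesis $M_i\in\mathrm{grmod}\,A$ enters part (2), through Lemmas~\ref{properties of Filtration on A} and~\ref{properties of Filtration modules}; part (1) is the easy implication and requires none of this. Everything else is formal, resting only on the two bookkeeping facts isolated at the start.
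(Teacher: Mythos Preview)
Your proof is correct and follows essentially the same approach as the paper's: part (1) is argued identically, and part (2) uses the same successive-approximation idea with termination guaranteed by the well-ordering of the filtration recorded after Lemma~\ref{properties of Filtration modules}. The only cosmetic difference is organizational: the paper first establishes strictness of $\varphi_2$ (and remarks the argument for $\varphi_1$ is analogous), then proves exactness, whereas you package exactness and strictness of $\varphi_1$ into a single ``lifting claim'' before treating strictness of $\varphi_2$; the underlying mechanism is the same in both.
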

\begin{proof}
(a) Clearly $G^r(\varphi_2)G^r(\varphi_1)=0$. Let $m_2\in F_\alpha(M_2)\backslash F_{<\alpha}(M_2)$, and $0\ne \overline{m}_2\in G^r(M_2)$. Suppose $G^r(\varphi_2)(\overline{m}_2)=0$. This implies $\varphi_2(m_2)\in F_{<\alpha}(M_3)$. However, $\varphi_2(m_2)\in\varphi_2(F_{<\alpha}(M_2))$ by the strictness of $\varphi_2$. There exists $m_2'\in F_{\alpha'}(M_2)$ such that $m_2-m_2'\in\Ker\varphi_2$ where $\alpha'<\alpha$. Exactness of ($\natural$) and strictness of $\varphi_1$ yield
$$
\varphi_1(F_\alpha(M_1))=\Image(\varphi_1)\cap F_\alpha(M_2)=\Ker(\varphi_2)\cap F_\alpha(M_2).
$$
Thus $\varphi_1(m_1)=m_2-m_2'$ for some $m_1\in F_\alpha(M_1)$. Then $G^r(\varphi_1)(\overline{m}_1)=\overline{\varphi_1(m_1)}=\overline{m}_2$.
This shows $\Ker G^r(\varphi_2)\subseteq\Image G^r(\varphi_1)$.

(b) The sufficiency is a special case of (a). To get the necessity, we proceed it in two steps. The first step is to show the strictness. We need only to prove the strictness of $\varphi_2$ since a similar argument is valid for $\varphi_1$.

Choose $m_3\in F_\alpha(M_3)\cap\Image(\varphi_2)$ and $m_3\notin F_{<\alpha}(M_3)$. There exists $m_2\in M_2$ with degree $\|\alpha\|$ which belongs to $F_{\alpha'}(M_2)$ such that $\varphi_2(m_2)=m_3$. If $\alpha'>\alpha$, $G^r(\varphi_2)(\overline{m}_2)=\overline{\varphi_2(m_2)}=0$. By the exactness, there exists $m_1\in F_{\alpha'}(M_1)$ with degree $\|\alpha\|$ such that $G^r(\varphi_1)(\overline{m}_1)=\overline{\varphi_1(m_1)}=\overline{m}_2$. Thus $m_2'=m_2-\varphi_1(m_1)\in F_{<\alpha'}(M_2)$ such that
$\varphi_2(m_2')=m_3$. The proof is completed if $m_2'\in F_\alpha(M_2)$. Otherwise, there is $\alpha''>\alpha$ such that $m_2'\in F_{\alpha''}(M_2)$  and $F_{\alpha''}\neq F_{<\alpha''}$. Repeat this procedure, by Lemma \ref{properties of Filtration modules}(d), it stops in finite steps. Finally, we get $\widetilde{m}_2\in F_\alpha{(M_2)}$ such that $\varphi_2(\widetilde{m}_2)=m_3$.

The second step is exactness. Let $m_2\in F_\alpha(M_2)\backslash F_{<\alpha}(M_2)$ such that $\varphi_2(m_2)=0$. Then $G^r(\varphi_2)(\overline{m}_2)=0$. Hence $\overline{m}_2\in\Ker G^r(\varphi_2)$. There exists $m_1^{(1)}\in F_\alpha(M_1)\backslash F_{<\alpha}(M_1)$ satisfying $G^r(\varphi_1)(\overline{m_1^{(1)}})=\overline{\varphi_1(m_1^{(1)})}
=\overline{m}_2.$
Thus $m_2'=m_2-\varphi_1(m_1^{(1)})\in F_{<\alpha}(M_2)\cap \Ker \varphi_2$. There is $\alpha'<\alpha$ such that $m_2'\in F_{\alpha'}(M_2)$ and $F_{\alpha'}(M_2)\neq F_{<\alpha'}(M_2)$. Repeat this procedure, by Lemma \ref{properties of Filtration modules}(d) and $M_2$ is bounded below, there exist finite number of $m_1^{(1)},m_1^{(2)},\cdots,m_1^{(t)}$ such that $m_2=\varphi_1(\sum_{i=1}^tm_1^{(i)})$. Therefore $\Ker \varphi_2\subseteq\Image\varphi_1$.
\end{proof}

The following corollary tells that the properties of submodules can also be obtained from its associated $\mathbb{Z}^r$-graded version.
\begin{corollary}\label{submodules gr eq then they eq}
Let $M_1, M_2\in\mathrm{GrMod}\,A$ be $\mathbb{Z}^r$-filtered modules.
\begin{enumerate}
\item Suppose $\phi:M_1\rightarrow M_2$ is a strict $\mathbb{Z}^r$-filtered homomorphism. Then $\Image G^r(\phi)\cong G^r(\Image \phi)$ and $\Ker G^r(\phi)\cong G^r(\Ker \phi)$.
\item If $L$ is a submodules of $M_1$, then $G^r(M_1/L)\cong G^r(M_1)/G^r(L)$.
\item Suppose $M_1\in\mathrm{grmod\,}A$. If $L_1, L_2$ are two submodules of $M_1$ and $L_1\subseteq L_2$ such that $G^r(L_1)=G^r(L_2)$, then $L_1=L_2$.
\end{enumerate}
\end{corollary}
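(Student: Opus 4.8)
The plan is to deduce all three parts from Lemma~\ref{exactnees original and graded}(a), feeding it the tautologically strict exact sequences attached to kernels, images and cokernels, together with two elementary observations that I would record first: the canonical inclusion of a filtered submodule $N\subseteq M$ (carrying its induced filtration) has injective associated graded map, and a strict filtered epimorphism $\psi:M\to N$ has surjective associated graded map. Both are immediate from $F_{<\alpha}(N)=N\cap F_{<\alpha}(M)$ in the first case and from $\psi(F_\alpha(M))=F_\alpha(N)$ in the second.

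For (a), I would factor $\phi$ as $M_1\xrightarrow{\bar\phi}\Image\phi\xrightarrow{\iota}M_2$. Strictness of $\phi$ says $\phi(F_\alpha(M_1))=\Image\phi\cap F_\alpha(M_2)=F_\alpha(\Image\phi)$, so $\bar\phi$ is a strict filtered epimorphism and $\iota$ is a canonical submodule inclusion; by the two observations $G^r(\bar\phi)$ is onto and $G^r(\iota)$ is mono, whence $\Image G^r(\phi)=G^r(\iota)\bigl(G^r(\Image\phi)\bigr)\cong G^r(\Image\phi)$. For the kernel I would apply Lemma~\ref{exactnees original and graded}(a) to the sequence $\Ker\phi\xrightarrow{\,j\,}M_1\xrightarrow{\,\phi\,}M_2$, which is exact at $M_1$ with both maps strict (the first being a submodule inclusion), obtaining $\Ker G^r(\phi)=\Image G^r(j)\cong G^r(\Ker\phi)$.

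For (b), I would apply Lemma~\ref{exactnees original and graded}(a) to $0\to L\xrightarrow{\,j\,}M_1\xrightarrow{\,p\,}M_1/L\to 0$: with the induced filtrations both $j$ and $p$ are strict (indeed $p(F_\alpha(M_1))=(F_\alpha(M_1)+L)/L=F_\alpha(M_1/L)$), while $G^r(j)$ is mono and $G^r(p)$ is epi, so the lemma upgrades this to a short exact sequence $0\to G^r(L)\to G^r(M_1)\to G^r(M_1/L)\to 0$ in $\mathrm{GrMod}\,G^r(A)$, which is exactly (b). For (c), viewing $G^r(L_1),G^r(L_2)$ as submodules of $G^r(M_1)$ via (a), the hypothesis is that they coincide; applying (b) with $(M_1,L)$ replaced by $(L_2,L_1)$ (the filtration induced on $L_1$ from $L_2$ agrees with the one induced from $M_1$) gives $G^r(L_2/L_1)\cong G^r(L_2)/G^r(L_1)=0$, and it remains to infer $L_2/L_1=0$.

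The main obstacle is precisely this last implication $G^r(N)=0\Rightarrow N=0$ for $N=L_2/L_1$, and this is where $M_1\in\mathrm{grmod}\,A$ and the finiteness clauses of Lemma~\ref{properties of Filtration modules} are needed. I would check that the induced filtration on the subquotient $N$ still satisfies $F_\alpha(N)\subseteq\bigoplus_{i\le\|\alpha\|}N_i$ and has only finitely many jumps in each total degree, hence is exhaustive and well-ordered by inclusion, so that every nonzero $m\in N$ lies in some $F_\alpha(N)\setminus F_{<\alpha}(N)$ and has nonzero leading term in $G^r(N)$. (Alternatively one can bypass (b) and prove $L_2\cap F_\alpha(M_1)\subseteq L_1$ by induction along the well-ordered set of jump degrees, using the degreewise equality $(L_1\cap F_\alpha(M_1))+F_{<\alpha}(M_1)=(L_2\cap F_\alpha(M_1))+F_{<\alpha}(M_1)$ coming from $G^r(L_1)=G^r(L_2)$.) Everything else is diagram-chasing on top of Lemma~\ref{exactnees original and graded}.
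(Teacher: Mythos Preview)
Your proposal is correct and follows essentially the same approach as the paper: parts (a) and (b) are deduced from Lemma~\ref{exactnees original and graded}(a) applied to the canonical exact sequences with their induced (hence strict) filtrations, and part (c) is obtained by applying (b) to the pair $(L_2,L_1)$ and then invoking the well-ordering of the induced filtration on $L_2/L_1$ to pass from $G^r(L_2/L_1)=0$ to $L_2/L_1=0$. The paper's proof is terser---it simply declares (a) and (b) ``immediate'' from the lemma and for (c) asserts the induced filtration is well-ordering---whereas you spell out the factorization of $\phi$, the strictness of each map, and the reason the finiteness hypothesis $M_1\in\mathrm{grmod}\,A$ (via Lemma~\ref{properties of Filtration modules}) is what makes the final implication go through.
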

\begin{proof}
(a) and (b) are immediate results of Lemma \ref{exactnees original and graded}(a).

(c) Notice that the $\mathbb{Z}^r$-filtration on $L_1,L_2$ and $L_2/L_1$, induced from the one on $M_1$, is also well-ordering. Then $L_2/L_1=0$ follows from $G^r(L_2/L_1)=0$, and the latter follows from (b).
\end{proof}

Next lemma is a special case of \cite[Proposition 2.3, Chapter 2]{LHS}. However, it holds in the graded case and we are concentrated on the construction of morphism.
\begin{lemma}\label{morphisim in ass graded to original}
Let $M$ be a finitely generated $A$-module, and $\underline{P}=\bigoplus_{i=1}^sG^r(A)\underline{e}_i$ a finitely generated free $G^r(A)$-module with basis $\{\underline{e}_i\}_{i=1}^s$. Assume $\underline{\phi}:\underline{P}\rightarrow G^r(M)$ is a surjective morphism in $\mathrm{GrMod}\,G^r(A)$. Then there exist a finitely generated free $A$-module $P$ and a strict $\mathbb{Z}^r$-filtered surjection $\phi:P\rightarrow M$ in $\mathrm{GrMod}\,A$ such that $G^r(\phi)=\underline{\phi}$.
\end{lemma}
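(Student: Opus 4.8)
The plan is to lift $\underline\phi$ basis vector by basis vector, to put the filtration (\ref{filtration on free modules}) on the resulting free module, and then to upgrade the lift to a strict surjection by a successive-approximation argument. For the lifted map $\phi$ there will be three things to check: that $\phi$ is a morphism in $\mathrm{GrMod}\,A$ with $G^r(\phi)=\underline\phi$, that $\phi$ is onto, and that $\phi$ is strict. I expect the only genuine difficulty to be the termination of the successive-approximation step, which is exactly where the special shape of our $\mathbb{Z}^r$-filtration is needed, through Lemma~\ref{properties of Filtration modules}(b),(d) and the fact that $M$ is bounded below.

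\emph{Constructing $\phi$.} Write $\deg\underline e_i=\alpha_i$, so that $\underline\phi(\underline e_i)$ is an element of $G^r(M)_{\alpha_i}=F_{\alpha_i}(M)/F_{<\alpha_i}(M)$. I would pick any representative of it in $F_{\alpha_i}(M)$ and, using Lemma~\ref{properties of Filtration modules}(b), rewrite it as an element $\xi_i\in M_{\|\alpha_i\|}$ plus an element of $F_{<\alpha_i}(M)$; then $\xi_i$ is still a representative, lies in $F_{\alpha_i}(M)$, and is homogeneous of degree $\|\alpha_i\|$. Now set $P=\bigoplus_{i=1}^sAe_i$ with $\deg e_i=\|\alpha_i\|$, filtered by (\ref{filtration on free modules}) attached to the data $\{\alpha_i\}_{i=1}^s$, and let $\phi\colon P\to M$ be the $A$-linear map sending $e_i$ to $\xi_i$. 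Since $\deg\xi_i=\deg e_i$, the map $\phi$ preserves degrees, so $\phi\in\mathrm{GrMod}\,A$. It is $\mathbb{Z}^r$-filtered because $\phi\bigl(F_\alpha(P)\bigr)=\sum_i\bigl(\sum_{\gamma+\alpha_i\le\alpha}F_\gamma(A)\bigr)\xi_i$ and $F_\gamma(A)\xi_i\subseteq F_\gamma(A)F_{\alpha_i}(M)\subseteq F_{\gamma+\alpha_i}(M)\subseteq F_\alpha(M)$. Finally $G^r(P)$ is free on the classes $\underline e_i$ of the $e_i$ (the observation after (\ref{filtration on free modules})), and $G^r(\phi)(\underline e_i)=\overline{\xi_i}=\underline\phi(\underline e_i)$; since $G^r(\phi)$ and $\underline\phi$ are $G^r(A)$-linear and agree on this basis, $G^r(\phi)=\underline\phi$.

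\emph{Surjectivity and strictness.} Recall that the filtration on the finitely generated module $M$ is well-ordering and that every nonzero $m\in M$ lies in some $F_\alpha(M)\setminus F_{<\alpha}(M)$ (the discussion after Lemma~\ref{properties of Filtration modules}). For such an $m$ the class $\overline m\in G^r(M)_\alpha$ is nonzero, so surjectivity of $G^r(\phi)=\underline\phi$ gives $p\in F_\alpha(P)$ with $m-\phi(p)\in F_{<\alpha}(M)$; note that if $m$ already lay in $F_{\alpha_0}(M)$ then necessarily $\alpha\le\alpha_0$, hence $p\in F_{\alpha_0}(P)$. Replacing $m$ by $m-\phi(p)$ and repeating, the successive indices strictly decrease inside the set $S=\{\beta\le\alpha:F_\beta(M)\ne F_{<\beta}(M)\}$. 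This $S$ is finite: a member $\beta$ has $F_\beta(M)\ne0$, so by Lemma~\ref{properties of Filtration modules}(b) some $M_i\ne0$ with $i\le\|\beta\|$, whence $\|\beta\|$ lies between the lower bound of $M$ and $\|\alpha\|$, a finite range, and for each total degree there are only finitely many such $\beta$ by Lemma~\ref{properties of Filtration modules}(d). Hence the process stops and exhibits $m$ as a finite sum $\phi(p_1+\cdots+p_N)$, so $\phi$ is onto; and if $m\in F_{\alpha_0}(M)$ then all the $p_j$ lie in $F_{\alpha_0}(P)$, giving $\phi\bigl(F_{\alpha_0}(P)\bigr)=F_{\alpha_0}(M)=\phi(P)\cap F_{\alpha_0}(M)$, which is strictness. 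The whole argument is mechanical once the finiteness of $S$---the one point that really uses the chosen $\mathbb{Z}^r$-filtration---is granted.
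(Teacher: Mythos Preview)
Your proof is correct and follows essentially the same construction as the paper's: lift the basis elements $\underline e_i$ to homogeneous preimages $\xi_i\in M_{\|\alpha_i\|}\cap F_{\alpha_i}(M)$ and define $\phi(e_i)=\xi_i$. The only difference is organizational: for surjectivity and strictness the paper simply observes that the sequence $P\xrightarrow{\phi}M\to 0$ has exact associated graded sequence and invokes Lemma~\ref{exactnees original and graded}(b), whereas you unpack the same successive-approximation argument (with the same appeal to Lemma~\ref{properties of Filtration modules}(d) and bounded-belowness) by hand. Your version has the minor advantage of making explicit why the descent terminates, but the paper's citation of Lemma~\ref{exactnees original and graded}(b) is more economical since that lemma has already been proved with exactly this iteration.
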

\begin{proof}
As mentioned above, set $P=\bigoplus_{i=1}^sAe_i$ on a basis of $\{e_i\}_{i=1}^s$ with $\deg e_i=\mathrm{tdeg\,} \underline{e}_i$ for $i=1, 2, \cdots, s$. $(P,\{\alpha_i\}_{i=1}^s)$ is a $\mathbb{Z}^r$-filtered pair of free modules such that $G^r(P)=\underline{P}$ where $\alpha_i=\deg\underline{e}_i\ (i=1,2,
\cdots,s)$.

Assume $\underline\phi(\underline{e}_i)=\overline{m}_i$, where $\overline{m}_i$ is a homogenous element in $G^r(M)_{\alpha_i}$ represented by $m_i$. Here, $m_i\in F_{\alpha_i}(M)\backslash F_{<\alpha_i}(M)$ and $m_i\in M_{\|\alpha_i\|}$ for $i=1,2,\cdots,s$. The existence of $m_i$ is  guaranteed by Lemma \ref{properties of Filtration modules}.

We define the morphism $\phi:P\rightarrow M$ in $\mathrm{GrMod}\,A$ by $\phi(e_i)=m_i$. It is easy to know $G^r(\phi)=\underline{\phi}$.
Since $\underline{\phi}$ is surjective, $\phi$ is a strict $\mathbb{Z}^r$-filtered surjection by Lemma \ref{exactnees original and graded}(b).
\end{proof}
The following lemma exhibits a construction of a free resolution for a $\mathbb{Z}^r$-filtered graded module from a free resolution of its associative $\mathbb{Z}^r$-graded module.
\begin{lemma}\label{resolution filtered to orginial}
Let $\mathbb{Z}^r$-filtered $M$ be a finitely generated $A$-module. $G^r(M)$ has a finite free resolution (the length is finite and each term in it is finitely generated):
$$
\xymatrix @M=2px@C=2pc@R=1pc{%
    	 0\ar[r]&{\underline{P}_m}\ar[r]^(0.4){\underline{d}_{m}}
    &{\underline{P}_{m-1}}\ar[r]^{\underline{d}_{m-1}}&\cdots
    \ar[r]^{\underline{d}_{2}}&{\underline{P}_1}\ar[r]^{\underline{d}_{1}}
    &{\underline{P}_0}\ar[r]^(0.4){\underline{d}_{0}}&{G^r(M)}\ar[r]&0
    ,}%
$$
where $\underline{P}_j=\bigoplus_{i=1}^{s_{j}}G^r(A)\underline{e}^j_i$ for $0\leq j\leq m$.
Then there exists a finite free resolution of $M$ in $\mathrm{GrMod}\,A$:
$$
    \xymatrix @M=2px@C=2pc@R=1pc{%
    	 0\ar[r]&{P_m}\ar[r]^(0.4){{d}_{m}}&{P_{m-1}}\ar[r]^{{d}_{m-1}}
    &\cdots\ar[r]^{{d}_{2}}&{P_1}\ar[r]^{{d}_{1}}&{P_0}\ar[r]^(0.4){{d}_{0}}&{M}\ar[r]&0
    ,}%
$$
where $P_j=\bigoplus_{i=1}^{s_{j}}A{e}^{j}_i$ with $\deg {e}^j_i=\mathrm{tdeg\,}\underline{e}^j_i$ and $(P_j, \{\deg \underline{e}_i^j\}_{i=1}^{s_j})$ is a $\mathbb{Z}^r$-filtered pair of free module such that $G^r(P_j)\cong \underline{P}_j$ and $G^r({d}_j)=\underline{d}_j$ for all $1\leq i\leq s_j, 0\leq j\leq m.$

As a consequence, $gl\dim A\leq gl\dim G^r(A)$.
\end{lemma}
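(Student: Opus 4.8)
The plan is to build a free resolution of $M$ one step at a time, by induction on the length $m$ of the given finite free resolution of $G^r(M)$. If $m=0$, then $\underline d_0\colon\underline P_0\to G^r(M)$ is an isomorphism; applying Lemma~\ref{morphisim in ass graded to original} I obtain a strict $\mathbb{Z}^r$-filtered surjection $d_0\colon P_0\to M$ with $G^r(d_0)=\underline d_0$, where $P_0=\bigoplus_{i=1}^{s_0}Ae_i^0$ and $\deg e_i^0=\mathrm{tdeg\,}\underline e_i^0$. Since $d_0$ is strict, Corollary~\ref{submodules gr eq then they eq}(a) gives $G^r(\Ker d_0)\cong\Ker G^r(d_0)=\Ker\underline d_0=0$, and as the induced filtration on $\Ker d_0$ is well-ordering with every element lying in some $F_\alpha\setminus F_{<\alpha}$, a nonzero submodule would have nonzero associated graded; hence $\Ker d_0=0$, $M\cong P_0$, and $0\to P_0\xrightarrow{d_0}M\to 0$ is the required resolution.

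For $m\ge 1$, assume the statement for resolutions of length $m-1$. As above, lift $\underline d_0$ to a strict $\mathbb{Z}^r$-filtered surjection $d_0\colon P_0\to M$ with $G^r(d_0)=\underline d_0$, and set $N=\Ker d_0$ with the filtration induced from $P_0$. By Corollary~\ref{submodules gr eq then they eq}(a), $G^r(N)\cong\Ker\underline d_0=\Image\underline d_1$, so the truncation $0\to\underline P_m\to\cdots\to\underline P_1\to\Image\underline d_1\to 0$ is a finite free resolution of $G^r(N)$ of length $m-1$. In particular $G^r(N)$ is finitely generated over $G^r(A)$; lifting a finite set of homogeneous generators to elements of $N$ and invoking Corollary~\ref{submodules gr eq then they eq}(c) shows $N$ is finitely generated over $A$, and the induced filtration on $N$ inherits the conclusions of Lemma~\ref{properties of Filtration modules}. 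Hence the induction hypothesis applies to $N$ and yields a finite free resolution $0\to P_m\to\cdots\to P_1\xrightarrow{d_1'}N\to 0$ in $\mathrm{GrMod}\,A$ with $G^r(P_j)\cong\underline P_j$, $\deg e_i^j=\mathrm{tdeg\,}\underline e_i^j$, and $G^r(d_j')=\underline d_j$ for $j\ge 2$, while $G^r(d_1')=\underline d_1$ viewed as the surjection onto $\Image\underline d_1=G^r(N)$.

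It remains to splice. Let $\iota\colon N\hookrightarrow P_0$ be the inclusion, which is strict for the induced filtration; by Corollary~\ref{submodules gr eq then they eq}(a), $G^r(\iota)$ is injective with image $G^r(N)=\Image\underline d_1\subseteq\underline P_0$. Put $d_1=\iota\circ d_1'\colon P_1\to P_0$. Then $0\to P_m\to\cdots\to P_1\xrightarrow{d_1}P_0\xrightarrow{d_0}M\to 0$ is exact (at $P_0$ because $\Image d_1=\iota(N)=\Ker d_0$, elsewhere by the resolution of $N$), it consists of the prescribed finitely generated free modules, and $G^r(d_1)=G^r(\iota)\circ G^r(d_1')=\underline d_1$, the other $G^r(d_j)$ being $\underline d_j$ already; thus $G^r$ applied termwise returns the original resolution of $G^r(M)$. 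For the consequence, if $n:=gl\dim G^r(A)<\infty$ then ${}_{A}k$ is finitely generated with $G^r({}_{A}k)=k$ as a $G^r(A)$-module (since $G^r(A)$ is connected), so $k$ admits a length-$n$ finite free resolution over $G^r(A)$; by the lemma ${}_{A}k$ admits one of length $\le n$ over $A$, whence $gl\dim A=\mathrm{pd}_A k\le n$.

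The step I expect to be the main obstacle is the passage between a module and its associated graded at two critical spots. First, one must ensure that the syzygy $N=\Ker d_0$ is again a finitely generated $A$-module carrying a filtration good enough for the inductive hypothesis; this is not automatic when $A$ is non-noetherian, and it is precisely here that the well-ordering of induced filtrations (Lemma~\ref{properties of Filtration modules} and Corollary~\ref{submodules gr eq then they eq}) must be used. Second, after resolving $N$ one must verify that $G^r$ applied to the spliced differential $d_1=\iota\circ d_1'$ recovers $\underline d_1$ exactly, which relies on the strictness of $\iota$ and the identification of images and kernels under $G^r$ in Corollary~\ref{submodules gr eq then they eq}(a). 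The remaining points—exactness of the spliced complex, the degree matching $\deg e_i^j=\mathrm{tdeg\,}\underline e_i^j$, and $G^r(P_j)\cong\underline P_j$—are routine bookkeeping with strict $\mathbb{Z}^r$-filtered maps.
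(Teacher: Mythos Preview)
Your proof is correct and follows the standard inductive lifting argument that the paper itself defers to \cite[Theorem 2.7]{TL} and \cite[Proposition 2.5, Chapter 2]{LHS}; the paper's own proof is essentially a pointer to these references together with the remark that Lemma~\ref{morphisim in ass graded to original} guarantees the lifted differentials preserve degrees. Your identification of the delicate step---that the syzygy $N=\Ker d_0$ with its induced filtration must be finitely generated and still satisfy the conclusions of Lemma~\ref{properties of Filtration modules} so that the induction can proceed---is exactly the point that needs care, and your use of Corollary~\ref{submodules gr eq then they eq}(c) (with $M_1=P_0$) to deduce finite generation of $N$ is the right mechanism.
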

\begin{proof}
The proof is similar to \cite[Theorem 2.7]{TL} and \cite[Proposition 2.5, Chapter 2]{LHS}. But we need to notice that ${d}_i$ is constructed as in Lemma \ref{morphisim in ass graded to original} which preserves degrees for all $i=1, 2, \cdots, m$.
\end{proof}

In the sequel, we use the following notations if there is no confusion:
\begin{eqnarray*}
&&(-)^\vee:=\uHom_A(-,A);\\
&&(-)^\vee:=\uHom_{G^r(A)}(-,G^r(A)).
\end{eqnarray*}

As usual, $\mathbb{Z}^r$-filtration for functor $(-)^\vee$ is defined below
$$
F_\alpha(P^\vee)=\{f\in P^\vee\;|\;\;f(F_{\alpha'}(P))\subseteq F_{\alpha'+\alpha}(A)\mathrm{\; for\;all\; \alpha'\in\mathbb{Z}^r}\},
$$
where $P=\bigoplus_{i=1}^sAe_i$ is a finitely generated free $A$-module on basis $\{e_i\}_{i=1}^s$. However, there is an isomorphism $                               \theta:P^\vee\cong P'$ in $\mathrm{grmod}\,A^o$ , where $P'=\bigoplus_{i=1}^se'_iA$ is a free $A^o$-module on a basis $\{e'_i\}_{i=1}^s$ with $\deg e'_i=-\deg e_i$ for $i=1, 2,\cdots, s$. It is easy to check that $\theta$ is a strict $\mathbb{Z}^r$-filtered isomorphism. Thus, the $\mathbb{Z}^r$-filtration above also satisfies Lemma \ref{properties of Filtration modules}.

\begin{lemma}\label{dual for graded} Let
$(\underline{P}_i,\, \underline{d}_i)$ and $(P_i,\, d_i)$ be defined as in Lemma \ref{resolution filtered to orginial} for $i=1, 2, \cdots, m$. Then $G^r(P_i^\vee)\cong \underline{P}_i^\vee$ and  $G^r(d_i\!^\vee)=\underline{d}_i\!^\vee$ for $i=1, 2, \cdots, m$.
\end{lemma}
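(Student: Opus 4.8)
The plan is to argue with explicit bases and the transpose matrix. Write $P_i=\bigoplus_l Ae^i_l$ with the filtration (\ref{filtration on free modules}) attached to the degrees $\alpha^i_l:=\deg\underline{e}^i_l$, so that $G^r(P_i)=\bigoplus_l G^r(A)\underline{e}^i_l$ with $\deg\underline{e}^i_l=\alpha^i_l$. By the strict $\mathbb{Z}^r$-filtered isomorphism $\theta$ recorded before the statement, $P_i^\vee$ is identified with the free right $A$-module $P_i'=\bigoplus_l e'^i_lA$, where $\deg e'^i_l=-\alpha^i_l$; hence $G^r(P_i^\vee)\cong G^r(P_i')$ is free over $G^r(A)$ on the symbols $\overline{e'^i_l}$, each of degree $-\alpha^i_l$. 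On the other hand $\underline{P}_i^\vee=\uHom_{G^r(A)}(\underline{P}_i,G^r(A))$ is free on the dual basis $\{(\underline{e}^i_l)^\ast\}_l$, again with $\deg(\underline{e}^i_l)^\ast=-\alpha^i_l$. I would therefore \emph{define} the candidate isomorphism $G^r(P_i^\vee)\xrightarrow{\ \sim\ }\underline{P}_i^\vee$ on basis elements by $\overline{e'^i_l}\mapsto(\underline{e}^i_l)^\ast$ and then verify it intertwines the two differentials, which is the only thing that needs checking.

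For the differentials, write $d_i(e^i_l)=\sum_k a^{(i)}_{kl}\,e^{i-1}_k$ with $a^{(i)}_{kl}\in A$. Since $d_i$ is a strict, $\mathbb{Z}^r$-filtered, degree‑preserving morphism with $G^r(d_i)=\underline{d}_i$ (Lemma~\ref{morphisim in ass graded to original}, Lemma~\ref{resolution filtered to orginial}), each entry $a^{(i)}_{kl}$ lies in $F_{\alpha^i_l-\alpha^{i-1}_k}(A)$ and is homogeneous of Adams degree $\|\alpha^i_l-\alpha^{i-1}_k\|$; by Lemma~\ref{properties of Filtration on A}(a) its symbol $\underline{a}^{(i)}_{kl}\in G^r(A)_{\alpha^i_l-\alpha^{i-1}_k}$ is exactly the $(k,l)$‑entry of the matrix of $\underline{d}_i$, i.e.\ $\underline{d}_i(\underline{e}^i_l)=\sum_k\underline{a}^{(i)}_{kl}\,\underline{e}^{i-1}_k$.

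Now dualize: $d_i^\vee(f)=f\circ d_i$, so on dual bases $d_i^\vee\bigl((e^{i-1}_k)^\ast\bigr)=\sum_l (e^i_l)^\ast\,a^{(i)}_{kl}$, i.e.\ $d_i^\vee$ is the transpose matrix of $d_i$ acting on the right. Each summand $(e^i_l)^\ast a^{(i)}_{kl}$ lies in $F_{-\alpha^i_l+(\alpha^i_l-\alpha^{i-1}_k)}(P_i^\vee)=F_{-\alpha^{i-1}_k}(P_i^\vee)$, matching the filtration degree of $(e^{i-1}_k)^\ast$; since passing to symbols is multiplicative, $G^r(d_i^\vee)\bigl(\overline{(e^{i-1}_k)^\ast}\bigr)=\sum_l\overline{(e^i_l)^\ast}\;\underline{a}^{(i)}_{kl}$. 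Under the basis identification of the first paragraph this is precisely $\sum_l(\underline{e}^i_l)^\ast\,\underline{a}^{(i)}_{kl}=\underline{d}_i^\vee\bigl((\underline{e}^{i-1}_k)^\ast\bigr)$. Hence the isomorphism $G^r(P_i^\vee)\cong\underline{P}_i^\vee$ carries $G^r(d_i^\vee)$ to $\underline{d}_i^\vee$, as claimed, and the argument is uniform in $i=1,\dots,m$.

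The delicate point, and the place where I would be most careful, is the degree bookkeeping for the dual filtration: one must check that $F_\alpha(P^\vee)$, as defined just before the statement, assigns $(e_l)^\ast$ filtration degree \emph{exactly} $-\deg e_l$ rather than merely $\le-\deg e_l$ (equivalently, that $\theta$ is strict, which is asserted in the text), and that whenever an entry $a^{(i)}_{kl}$ drops strictly below its expected filtration level, so that $\underline{a}^{(i)}_{kl}=0$, the corresponding summand of $d_i^\vee\bigl((e^{i-1}_k)^\ast\bigr)$ also drops a level in $P_i^\vee$. Because $P_i^\vee\cong P_i'$ is free, this second point is automatic ($\overline{e'^i_l}\,\underline{a}=0$ iff $\underline{a}=0$), so the basis identification is a genuine isomorphism of $\mathbb{Z}^r$‑graded modules; granting these bookkeeping facts, everything else is the routine verification above.
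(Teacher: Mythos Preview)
Your proof is correct and is essentially a fleshed-out version of the paper's own argument. The paper's proof is two sentences: it says $G^r(P_i^\vee)\cong\underline{P}_i^\vee$ is immediate from freeness via the strict isomorphism $\theta$ and Lemma~\ref{properties of Filtration modules}, and that $G^r(d_i^\vee)=\underline{d}_i^\vee$ ``can be verified straightforwardly by Lemma~\ref{morphisim in ass graded to original} and the isomorphism $\theta$''; your explicit basis-and-transpose computation is precisely what that straightforward verification amounts to.
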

\begin{proof}
Since $P_i^\vee$ is a finitely generated free module, $\underline{P}_i^\vee\cong G^r(P_i^\vee)$ is a easy result by Lemma \ref{properties of Filtration modules}. And the other one can be verified straightforwardly by Lemma \ref{morphisim in ass graded to original} and the isomorphism $\theta$.
\end{proof}
\vspace{2mm}

\subsection{The regularity}~

Now, we give the regularity criterion for a connected graded algebra.

Let $A=k\langle X\rangle/I$ be a connected graded algebra. We keep the $\mathbb{Z}^r$-filtration on $A$ defined in last subsection. Actually, this $\mathbb{Z}^r$-filtration is equivalent to a $\mathbb{Z}^r$-grading on $k\langle X\rangle$ such that $G^r(A)$ is a $\mathbb{Z}^r$-graded algebra. Provided an appropriate $\mathbb{Z}^r$-grading, one may derive some properties of $A$ from $G^r(A)$.

\begin{theorem}\label{key theo 1}
Let $A=k\langle X\rangle/I$ be a connected graded algebra. If $G^r(A)$ is AS-regular for an appropriate $\mathbb{Z}^r$-grading on $k\langle X\rangle$, then $A$ is AS-regular.
\end{theorem}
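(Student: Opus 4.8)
The plan is to verify the three Artin–Schelter conditions (AS1), (AS2), (AS3) for $A$ by transferring them from $G^r(A)$ via the filtration machinery developed in this section. First I would record that the $\mathbb{Z}^r$-filtration on $A$ is exhaustive and well-ordered (Lemma \ref{properties of Filtration on A} and the discussion following Lemma \ref{properties of Filtration modules}), so that $G^r(A)$ is a connected properly $\mathbb{Z}^r$-graded algebra and $G^r(M)\neq 0$ whenever $M\neq 0$. Since $G^r(A)$ is AS-regular, it has finite global dimension $d$; by the Minimal Model / Proposition \ref{SZ}-type structure, the trivial module $_{G^r(A)}k$ has a finite minimal free resolution of length $d$ with each term finitely generated. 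Now I would apply Lemma \ref{resolution filtered to orginial} to $M = {_A}k$ — noting $G^r({_A}k) = {_{G^r(A)}}k$ because the filtration on $A$ gives $F_0(A)=k$ — to lift this to a finite free resolution of $_A k$ in $\mathrm{GrMod}\,A$ of length at most $d$. This gives $\mathrm{gl\dim}\,A \le \mathrm{gl\dim}\,G^r(A) = d < \infty$, which is (AS1). (One also gets that the two global dimensions are in fact equal, since a strict filtered nonsplit resolution cannot collapse; but only finiteness is needed here.)

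For (AS2): the Hilbert series is insensitive to the associated graded construction in the following sense. Because the filtration on $A$ is exhaustive and, in each total degree $i$, refines $A_i$ into finitely many filtration quotients whose direct sum is $A_i$ (Lemma \ref{properties of Filtration on A}(a) together with Lemma \ref{properties of Filtration modules}(d)), we have $\dim_k A_i = \sum_{\|\alpha\|=i}\dim_k\big(F_\alpha(A)/F_{<\alpha}(A)\big) = \dim_k G^r(A)_i$ after collapsing the $\mathbb{Z}^r$-grading to its total degree. Hence $H_A(t) = H_{G^r(A)}(t)$ as ordinary Hilbert series, and since $G^r(A)$ has finite GK-dimension this forces polynomial growth of $\dim_k A_i$, giving $\mathrm{GKdim}\,A = \mathrm{GKdim}\,G^r(A) < \infty$, which is (AS2).

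The real work is (AS3), the Gorenstein condition, and this is where I expect the main obstacle. Take the finite minimal free resolution $P_\bullet \to {_A}k$ produced above, with $G^r(P_\bullet) = \underline{P}_\bullet$ the (minimal) resolution of $_{G^r(A)}k$. Applying the duality functor $(-)^\vee = \uHom_A(-,A)$ and using Lemma \ref{dual for graded}, the complex $P_\bullet^\vee$ is a $\mathbb{Z}^r$-filtered complex whose associated graded complex is $\underline{P}_\bullet^\vee = \uHom_{G^r(A)}(\underline{P}_\bullet, G^r(A))$. Since $G^r(A)$ is AS-regular with some Gorenstein parameter $l$, the latter complex has cohomology concentrated in degree $d$, equal to $k(l)$ as a $\mathbb{Z}^r$-graded module, in particular one-dimensional. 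Now I would run Lemma \ref{exactnees original and graded}(b) in reverse: exactness of the associated graded complex $\underline{P}_\bullet^\vee$ in degrees $\neq d$ forces exactness of $P_\bullet^\vee$ in those degrees and strictness of the relevant maps, so $\uExt^i_A(k,A)=0$ for $i\neq d$; and in degree $d$, the map $d_d^\vee : P_{d-1}^\vee \to P_d^\vee$ has, by strictness, $G^r(\mathrm{Coker}) \cong \mathrm{Coker}(\underline{d}_d^\vee) = k(l')$ for the scalar part $l'$ of $l$, whence $\uExt^d_A(k,A) \cong k$ concentrated in the single degree $\|l\|$, i.e. $\cong k(\|l\|)$ as a $\mathbb{Z}$-graded $A^o$-module. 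That is precisely (AS3) with Gorenstein parameter $\|l\|$. The delicate point throughout is bookkeeping the filtration/strictness hypotheses needed to apply Lemma \ref{exactnees original and graded}(b): one must check that the differentials in $P_\bullet^\vee$ are genuinely $\mathbb{Z}^r$-filtered maps (this is Lemma \ref{dual for graded}) and that the filtration induced on each cohomology/kernel/cokernel is good enough — finitely generated and well-ordered — which is where Corollary \ref{submodules gr eq then they eq} and Lemma \ref{properties of Filtration modules}(d) are invoked, and where I would be most careful to avoid a circular argument.
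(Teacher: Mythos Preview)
Your proposal is correct and follows essentially the same architecture as the paper's proof: lift the minimal free resolution of $_{G^r(A)}k$ to one of $_Ak$ via Lemma~\ref{resolution filtered to orginial} for (AS1), then dualize and use Lemma~\ref{dual for graded} together with Lemma~\ref{exactnees original and graded}(b) and Corollary~\ref{submodules gr eq then they eq} to deduce (AS3) with Gorenstein parameter $\|l\|$. The only deviation is in (AS2): the paper simply invokes \cite[Theorem~2.8]{TO} to get $\mathrm{GK}\dim A=\mathrm{GK}\dim G^r(A)$, whereas you give the underlying Hilbert-series argument directly, observing that for each $i$ the intersections $F_\alpha(A)\cap A_i$ with $\|\alpha\|=i$ filter $A_i$ with successive quotients $G^r(A)_\alpha$; this is a harmless and arguably more transparent replacement for the citation.
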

\begin{proof}
By Lemma \ref{resolution filtered to orginial}, we know $gl\dim A\leq gl\dim G^r(A)$ is finite.

Notice that $G^r(A)$ can be seen as a $\mathbb{N}^r$-graded algebra which does not change the GK-dimension. Thus $\mathrm{GK}\dim A=\mathrm{GK} \dim G^r(A)$ is finite by \cite[Theorem 2.8]{TO}.

It remains to show that $A$ is Gorenstein. Since $G^r(A)$ is AS-regular, there exists a minimal free resolution of $_{G^r(A)}k$:
$$
\xymatrix @M=2px@C=2pc@R=1pc{%
    	 0\ar[r]&{\underline{P}_n}\ar[r]^(0.4){\underline{d}_{n}}
    &{\underline{P}_{n-1}}\ar[r]^{\underline{d}_{n-1}}&\cdots\ar[r]^{\underline{d}_{2}}
    &{\underline{P}_1}\ar[r]^{\underline{d}_{1}}&{\underline{P}_0}
    \ar[r]^(0.4){\underline{d}_{0}}&{_{G^r(A)}k}\ar[r]&0
    ,}%
$$
where $\underline{P}_j=\bigoplus_{i=1}^{s_{j}}G^r(A)\underline{e}^{j}_i$ for all $0\leq j\leq n.$

It is easy to know $G^r(k)\cong k$ as $G^r(A)$-modules. By Lemma \ref{resolution filtered to orginial}, there exists a finite free resolution of $_Ak$ in $\mathrm{GrMod}\,A$:
\begin{equation*}
    \xymatrix @M=2px@C=2pc@R=1pc{%
    	 0\ar[r]&{P_n}\ar[r]^(0.4){{d}_{n}}&{P_{n-1}}\ar[r]^{{d}_{n-1}}
    &\cdots\ar[r]^{{d}_{2}}&{P_1}\ar[r]^{{d}_{1}}&{P_0}\ar[r]^(0.4){{d}_{0}}&{_Ak}\ar[r]&0
    ,}%
\end{equation*}
where $P_j=\bigoplus_{i=1}^{s_{j}}A{e}^{j}_i$ with $\deg{e}^j_i=\mathrm{tdeg\,}\underline{e}^j_i$ and $(P_j,\{\deg \underline{e}_i^j\}_{i=1}^{s_j})$ is a $\mathbb{Z}^r$-filtered pair of free module such that $G^r(P_j)\cong \underline{P}_j$ and $G^r({d}_j)=\underline{d}_j$ for all $1\leq i\leq s_j, \, 0\leq j\leq n.$

The regularity of $G^r(A)$ implies $\Image(\underline{d}_{i-1}\!^\vee)=\Ker(\underline{d}_i\!^\vee)$. Note that $G^r({d}_i\!^\vee)=\underline{d}_i\!^\vee$ and $G^r(P_i^\vee)\cong \underline{P}_i^\vee$ by Lemma \ref{dual for graded} for $i=1, 2,\cdots, n$. Thus $\uExt_A^i(k,A)=0$ for all $i<n$ by Lemma \ref{exactnees original and graded}(b). Moreover, ${d}_n\!^\vee$ is strict. Now we turn to compute $\uExt_A^n(k,A)$.
$$
\uExt_{G^r(A)}^n(k,G^r(A))=\underline{P}_n^\vee/\Image(\underline{d}_n\!^\vee)\cong G^r(P_n^\vee)/\Image(G^r({d}_{n}\!^\vee))\cong k(l)
$$
for some $l\in \mathbb{Z}^r$. By Corollary \ref{submodules gr eq then they eq}, we have $$G^r(\uExt_A^n(k,A))=G^r(P_n^\vee/\Image({d}_{n}\!^\vee))\cong G^r(P_n^\vee)/\Image(G^r({d}_{n}\!^\vee))\cong k(l),$$
where the $\mathbb{Z}^r$-filtration on $\uExt_A^n(k,A)$ is induced by the one on $P_n^\vee$.

Hence, $F_{-l}(\uExt_A^n(k,A))/F_{<-l}(\uExt_A^n(k,A))\cong k$ and $F_\alpha(\uExt_A^n(k,A))=F_{<\alpha}(\uExt_A^n(k,A))$ for all $\alpha\in\mathbb{Z}^r$ except for $\alpha=-l$. Since the $\mathbb{Z}^r$-filtration on $P_n^\vee$ satisfies Lemma \ref{properties of Filtration modules}, we know $\uExt_A^n(k,A)\cong k(\|l\|)$. In addition, $gl\dim A=gl\dim G^r(A)$.
\end{proof}

To make the regularity criterion theorem above available in practice, a good way is to use Gr\"obner theory. We review noncommutative Gr\"obner basis theory briefly, a detailed treatment can be found in \cite{LHS}. We firstly choose an arbitrary monomial ordering $\prec$ on $X^*$. This induces a $\mathbb{Z}^r$-graded admissible ordering $\prec_{\mathbb{Z}^r}$ on $X^*$: for $u, v\in X^*$, $u\prec_{\mathbb{Z}^r}v$ is defined by
\begin{enumerate}
\item $\deg^r(u)<\deg^r(v)$, or
\item $\deg^r(u)=\deg^r(v)$ and $u\prec v$.
\end{enumerate}

For a nonzero polynomial $f\in k\langle X\rangle$, we can write $f=\sum_{i=1}^qf_i$, where each nonzero $f_i$ is $\mathbb{Z}^r$-homogenous with $\deg^rf_i=\alpha_i$ and $\alpha_1<\alpha_2<\cdots<\alpha_q$,  $f_q$ is called the \emph{leading homogenous polynomial} of $f$, denoted by $LH(f)$. Let $\mathcal{G}$ be the reduced monic Gr\"obner basis of $I$ under admissible ordering $\prec_{\mathbb{Z}^r}$, and $LH(\mathcal{G})=\{LH(f)\;|\;f\in \mathcal{G}\}$.

With the above preparations, now we are in position to prove Theorem \ref{first theo}.

\begin{proof}[Proof of Theorem \ref{first theo}]
Due to the observation above Theorem \ref{key theo 1}, there exists a partition on generator set $X$ corresponding to the $\mathbb{Z}^r$-grading on $k\langle X\rangle$. This partition induces a  $\mathbb{Z}^r$-filtration on $A$ as defined in Section 2.1, and $G^r(A)$ is the associated $\mathbb{Z}^r$-graded algebra. Here the ordering $<$ on $\mathbb{Z}^r$ is the top priority in $\prec_{\mathbb{Z}^r}$. And $\mathcal{G}$ is the reduced Gr\"obner basis of $I$ with respect to $\prec_{\mathbb{Z}^r}$.  From \cite[Theorem 2.3, Chapter 4]{LHS}, we know
$$
G^r(A)\cong k\langle X\rangle/(LH(\mathcal{G}))
$$
as $\mathbb{Z}^r$-graded algebras.
Thus $A$ is AS-regular by Theorem \ref{key theo 1}.
\end{proof}

\begin{remark}\label{deformation}
Theorem \ref{first theo} provides a possible generalized deformation from known $\mathbb{Z}^r$-graded AS-regular algebras; that is, by adding some appropriate low-terms to the relations, one may produce some new classes of AS-regular algebras.
\end{remark}

\begin{question}
Suppose that $A=k\langle X\rangle/ I$ is an AS-regular algebra, does there exist an appropriate $\mathbb{Z}^r$-grading on $k\langle X\rangle$ such that $k\langle X\rangle/(LH(\mathcal{G}))$ is AS-regular?
\end{question}

\vspace{2mm}

\subsection{Ring-theoretic and homological properties}~

AS-regular algebras obtained so far all have nice ring-theoretic and homological properties, such as noetherian, strongly noetherian, and Auslander regular. In this subsection, we show that those properties also hold if their associated $\mathbb{Z}^r$-graded algebras have.

\begin{theorem}\label{nsnac}
Let $A=k\langle X\rangle/I$ be a connected graded algebra.  If $G^r(A)$ is strongly noetherian and Auslander regular for an appropriate $\mathbb{Z}^r$-grading on $k\langle X\rangle$, then so is $A$.
\end{theorem}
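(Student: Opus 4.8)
The plan is to mirror the proof strategy of Theorem \ref{key theo 1}, but now tracking the ring-theoretic and homological data instead of (or in addition to) the Gorenstein condition. The basic principle is the same: a ``good'' $\mathbb{Z}^r$-filtration on $A$ with associated graded $G^r(A)$ lets one lift structural properties from $G^r(A)$ to $A$. First I would recall the standard filtered-to-graded transfer results for the noetherian property. If $G^r(A)$ is (left and right) noetherian, then $A$ is noetherian: given an ascending chain of left ideals $\{L_n\}$ in $A$, equip each $L_n$ with the induced $\mathbb{Z}^r$-filtration; the chain $\{G^r(L_n)\}$ stabilizes in $G^r(A)$ since the $\mathbb{Z}^r$-filtration is well-ordering (Lemma \ref{properties of Filtration modules}), and then $L_n=L_{n+1}$ by Corollary \ref{submodules gr eq then they eq}(c). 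This is the $\mathbb{Z}^r$-analogue of the classical argument for $\mathbb{N}$-filtered algebras, and the key technical input — that the induced filtration on a submodule is again well-ordering — is already established in the excerpt.

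Next, for strong noetherianity, the point is that the $\mathbb{Z}^r$-filtration on $A$ extends to a $\mathbb{Z}^r$-filtration on $A\otimes_k C$ for any commutative noetherian $k$-algebra $C$, by setting $F_\alpha(A\otimes_k C)=F_\alpha(A)\otimes_k C$. One checks directly that the associated $\mathbb{Z}^r$-graded algebra is $G^r(A)\otimes_k C$, and that the filtration on $A\otimes_k C$ and on its submodules remains well-ordering (the arguments of Lemma \ref{properties of Filtration modules} go through verbatim, as tensoring with $C$ does not disturb the $\mathbb{Z}^r$-grading bookkeeping). Since $G^r(A)$ is strongly noetherian, $G^r(A)\otimes_k C$ is noetherian, and the filtered-to-graded noetherian transfer from the previous paragraph shows $A\otimes_k C$ is noetherian. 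As $C$ was arbitrary, $A$ is strongly noetherian.

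For Auslander regularity I would argue as follows. We already know from Theorem \ref{key theo 1} (or directly from Lemma \ref{resolution filtered to orginial}) that $\mathrm{gl\,dim}\,A=\mathrm{gl\,dim}\,G^r(A)<\infty$, so $A$ has finite global dimension. The Auslander condition requires that for every finitely generated $A$-module $M$, every $i\geq 0$, and every $A$-submodule $N\subseteq\uExt^i_A(M,A)$, one has $\uExt^j_A(N,A)=0$ for $j<i$. The strategy is to transfer the grade/$j$-number of a module under the filtration. Given a finitely generated $\mathbb{Z}^r$-filtered $A$-module $M$ with a good filtration, Lemma \ref{resolution filtered to orginial} gives a finite free resolution $P_\bullet\to M$ that is $\mathbb{Z}^r$-filtered and strict, with $G^r(P_\bullet)\to G^r(M)$ a free resolution; dualizing and using Lemma \ref{dual for graded} together with Lemma \ref{exactnees original and graded}, the cohomology $\uExt^i_A(M,A)$ carries an induced filtration with $G^r(\uExt^i_A(M,A))\cong\uExt^i_{G^r(A)}(G^r(M),G^r(A))$ — exactly as in the computation of $\uExt^n_A(k,A)$ in the proof of Theorem \ref{key theo 1}. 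In particular $\mathrm{j}_A(M)=\mathrm{j}_{G^r(A)}(G^r(M))$, where $\mathrm{j}$ denotes the grade number (smallest $i$ with $\uExt^i\neq 0$). Now given $N\subseteq\uExt^i_A(M,A)$, equip $N$ with the induced filtration; then $G^r(N)\subseteq G^r(\uExt^i_A(M,A))\cong\uExt^i_{G^r(A)}(G^r(M),G^r(A))$, so by the Auslander condition on $G^r(A)$ we get $\mathrm{j}_{G^r(A)}(G^r(N))\geq i$, hence $\mathrm{j}_A(N)=\mathrm{j}_{G^r(A)}(G^r(N))\geq i$, which is precisely the Auslander condition for $A$.

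The main obstacle, and the step I would spend the most care on, is the claim that for an \emph{arbitrary} finitely generated $A$-submodule $N$ of $\uExt^i_A(M,A)$ one has $G^r(N)\hookrightarrow G^r(\uExt^i_A(M,A))$ compatibly with the Ext-identification, and that the resulting filtration on $N$ is good enough (well-ordering, so that $G^r(N)=0$ forces $N=0$ and so that $\mathrm{j}_A(N)$ can be computed from $G^r(N)$). The subtlety is that the filtration on $\uExt^i_A(M,A)$ is the one \emph{induced} from $P_i^\vee$ via Corollary \ref{submodules gr eq then they eq}, not an a priori ``good'' filtration of the form (F2), so one must verify that its restriction to $N$ still satisfies the conclusions of Lemma \ref{properties of Filtration modules} — in particular parts (c) and (d), which are what make the grade number filtration-stable. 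This essentially repeats, in the relative setting, the bookkeeping already done for the trivial module, and I expect it to go through; but it is where the argument needs to be written out carefully rather than quoted. Everything else — the noetherian transfer, the tensoring argument for strong noetherianity, the finiteness of global dimension — is routine given Lemmas \ref{exactnees original and graded}--\ref{dual for graded} and Corollary \ref{submodules gr eq then they eq}.
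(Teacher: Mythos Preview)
Your overall architecture matches the paper's: noetherian via Corollary \ref{submodules gr eq then they eq}(c), strongly noetherian by tensoring the filtration with $C$ and identifying $G^r(A\otimes C)\cong G^r(A)\otimes C$ (this is Lemma \ref{tensor} and Remark \ref{remark fro filtration on not connected}), and Auslander regularity by comparing $j$-numbers through the filtration. So the strategy is right.

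There is, however, a genuine gap in your Auslander step. You assert that for an arbitrary finitely generated $M$ one gets an \emph{isomorphism} $G^r(\uExt^i_A(M,A))\cong\uExt^i_{G^r(A)}(G^r(M),G^r(A))$ ``exactly as in the computation of $\uExt^n_A(k,A)$ in the proof of Theorem \ref{key theo 1}'', and hence an \emph{equality} $j_A(M)=j_{G^r(A)}(G^r(M))$. But the argument in Theorem \ref{key theo 1} worked only because Gorenstein-ness of $G^r(A)$ forces $\uExt^i_{G^r(A)}(k,G^r(A))=0$ for $i<n$, which via Lemma \ref{exactnees original and graded}(b) gives exactness \emph{and} strictness of the dual complex $P_\bullet^\vee$ up to degree $n$. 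For a general $M$ the dual complex has cohomology in several degrees, the maps $d_i^\vee$ need not be strict, and without strictness Corollary \ref{submodules gr eq then they eq}(a) does not apply; you only have the containments $G^r(\Ker\phi)\subseteq\Ker G^r(\phi)$ and $\Image G^r(\phi)\subseteq G^r(\Image\phi)$. The paper handles this with two weaker lemmas that are in fact all one needs: Lemma \ref{kernel} shows $G^r(\uExt^i_A(M,A))$ is a \emph{subquotient} of $\uExt^i_{G^r(A)}(G^r(M),G^r(A))$, and Lemma \ref{jnumber} proves only the \emph{inequality} $j_A(M)\geq j_{G^r(A)}(G^r(M))$. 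These suffice: $G^r(N)$ is then a subquotient of $\uExt^i_{G^r(A)}(G^r(M),G^r(A))$, the Auslander condition on $G^r(A)$ gives $j_{G^r(A)^o}(G^r(N))\geq i$, and the inequality pushes this to $j_{A^o}(N)\geq i$. That the equality of $j$-numbers is genuinely out of reach here is underscored by the Remark following Corollary \ref{nsnac1}: the authors explicitly say they cannot prove $j_{G^r(A)}(G^r(M))=j_A(M)$ in general (it would yield Cohen--Macaulay transfer), and leave it as a conjecture. So replace your isomorphism/equality claims by subquotient/inequality and the proof goes through; as written, the step you flagged as ``the main obstacle'' actually fails, but for a different reason than the one you identified.
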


Before proving this theorem, we need some lemmas.

Firstly, we set the definition of $\mathbb{Z}^r$-filtration on tensor product. Let $A_1$ and $A_2$ be two algebras with $A_1$ being a $\mathbb{Z}^r$-filtered algebra. We introduce a $\mathbb{Z}^r$-filtration on $A_1\otimes A_2$ by
$$
F_\alpha(A_1\otimes A_2)=F_\alpha(A_1)\otimes A_2\quad \text{ for all }\alpha\in\mathbb{Z}^r.
$$
\begin{remark}\label{remark fro filtration on not connected}
Suppose $A_1$ is a connected graded algebra and $A_2$ is regard as a graded algebra concentrated in degree 0. The $\mathbb{Z}^r$-filtration on $A_1$ is the one defined in Section 2.1, then the $\mathbb{Z}^r$-filtration on $A_1\otimes A_2$ satisfies Lemma \ref{properties of Filtration on A}(a,b). And $\mathbb{Z}^r$-filtration on modules in $\mathrm{grmod\,}A_1\otimes A_2$ is defined as (\ref{filtration on fg modules}). Then Lemma \ref{properties of Filtration modules}, Lemma \ref{exactnees original and graded} and Corollary \ref{submodules gr eq then they eq} still hold in the category $\mathrm{grmod\,}(A_1\otimes A_2)$.
\end{remark}
\begin{lemma}\label{tensor}
Let $A_1$ and $A_2$ be two algebras where $A_1$ is a $\mathbb{Z}^r$-filtered algebras, then $G^r(A_1\otimes A_2)\cong G^r(A_1)\otimes A_2$.
\end{lemma}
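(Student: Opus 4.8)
The plan is to prove $G^r(A_1 \otimes A_2) \cong G^r(A_1) \otimes A_2$ by unwinding the definition of the associated $\mathbb{Z}^r$-graded algebra together with the specific $\mathbb{Z}^r$-filtration $F_\alpha(A_1 \otimes A_2) = F_\alpha(A_1) \otimes A_2$ imposed just above. The core observation is that tensoring with the fixed $k$-vector space $A_2$ (equivalently $A_2$ as a $k$-module) is exact, so it commutes with forming the subquotients that build $G^r$.

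First I would compute, for each $\alpha \in \mathbb{Z}^r$, the space $F_{<\alpha}(A_1 \otimes A_2) = \bigcup_{\alpha' < \alpha} F_{\alpha'}(A_1 \otimes A_2) = \bigcup_{\alpha' < \alpha}\bigl(F_{\alpha'}(A_1) \otimes A_2\bigr)$. Since $-\otimes_k A_2$ preserves unions (directed colimits) of subspaces, this equals $\bigl(\bigcup_{\alpha' < \alpha} F_{\alpha'}(A_1)\bigr) \otimes A_2 = F_{<\alpha}(A_1) \otimes A_2$. Next, using that $-\otimes_k A_2$ is exact and hence takes the quotient $F_\alpha(A_1)/F_{<\alpha}(A_1)$ to $\bigl(F_\alpha(A_1) \otimes A_2\bigr)/\bigl(F_{<\alpha}(A_1) \otimes A_2\bigr)$, I get for each degree $\alpha$ a natural $k$-linear isomorphism
$$
\frac{F_\alpha(A_1 \otimes A_2)}{F_{<\alpha}(A_1 \otimes A_2)} \;\cong\; \frac{F_\alpha(A_1)}{F_{<\alpha}(A_1)} \otimes A_2.
$$
Taking the direct sum over all $\alpha \in \mathbb{Z}^r$ yields $G^r(A_1 \otimes A_2) \cong G^r(A_1) \otimes A_2$ as $\mathbb{Z}^r$-graded $k$-vector spaces, where on the right $A_2$ sits in degree $0$ (matching Remark \ref{remark fro filtration on not connected}).

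Finally I would check this $k$-linear isomorphism is multiplicative. On representatives, the product in $G^r(A_1 \otimes A_2)$ of the classes of $a \otimes b$ and $a' \otimes b'$ is the class of $(aa') \otimes (bb')$, and this matches the product $(\bar a \otimes b)(\bar{a'} \otimes b') = \overline{aa'} \otimes bb'$ in $G^r(A_1) \otimes A_2$; the filtration axiom $F_\alpha(A_1)F_{\alpha'}(A_1) \subseteq F_{\alpha+\alpha'}(A_1)$ guarantees the product lands in the correct filtration piece so the classes are well-defined and the identification respects degrees. I expect no genuine obstacle here: the only point requiring a word of care is that the filtration on $A_1 \otimes A_2$ is defined only by filtering the first tensor factor, so one must confirm that the "constant" factor $A_2$ genuinely contributes nothing to the passage to $G^r$ beyond being carried along as a flat coefficient space — which is exactly the exactness of $-\otimes_k A_2$. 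The verification that the isomorphism is an isomorphism of algebras (not merely of graded vector spaces) is then routine.
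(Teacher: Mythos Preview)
Your proposal is correct and follows essentially the same approach as the paper: both arguments use the flatness of $A_2$ over $k$ to identify each graded piece $G^r(A_1\otimes A_2)_\alpha$ with $G^r(A_1)_\alpha\otimes A_2$, and then assert that the resulting bijection is an algebra isomorphism. Your write-up is in fact slightly more thorough, since you explicitly justify $F_{<\alpha}(A_1\otimes A_2)=F_{<\alpha}(A_1)\otimes A_2$ via commutation of $-\otimes_k A_2$ with directed unions and spell out the multiplicativity check, points the paper leaves implicit.
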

\begin{proof}
For any $\alpha\in\mathbb{Z}^r$, there exists an exact sequence as vector space,
$$
0\rightarrow F_{<\alpha}(A_1)\rightarrow F_\alpha(A_1)\rightarrow G^r(A_1)_\alpha\rightarrow 0.
$$
Note that $A_2$ is flat as $k$-module. Hence, acting $-\otimes A_2$ on that sequence,
$$
0\rightarrow F_{<\alpha}(A_1)\otimes A_2\rightarrow F_\alpha(A_1)\otimes A_2\rightarrow G^r(A_1)_\alpha\otimes A_2\rightarrow 0.
$$
is still exact, which implies
$$
G^r(A_1)_\alpha\otimes A_2\cong\frac{F_\alpha(A_1)\otimes A_2}{F_{<\alpha}(A_1)\otimes A_2}=G^r(A_1\otimes A_2)_\alpha.
$$

It is easy to check that $G^r(A_1\otimes A_2)\cong G^r(A_1)\otimes A_2$ as $\mathbb{Z}^r$-graded algebras.
\end{proof}

\begin{lemma}\label{kernel}
Let $A$ be a connected graded algebra. If $\mathbb{Z}^r$-filtered $A$-module $M$ has a finite free resolution, then $G^r(\uExt_A^i(M,A))$ is a subquotient of $\uExt_{G^r(A)}^i(G^r(M),G^r(A))$ for any $i\geq 0$.
\end{lemma}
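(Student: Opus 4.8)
The plan is to apply the machinery of Lemma \ref{resolution filtered to orginial} and Lemma \ref{dual for graded} to a finite free resolution of $M$, and then track how exactness behaves under the functor $(-)^\vee$ and passage to associated graded objects. First I would fix a finite free resolution of the $\mathbb{Z}^r$-filtered module $M$ in $\mathrm{GrMod}\,A$,
$$
0\rightarrow P_m\xrightarrow{d_m} P_{m-1}\rightarrow\cdots\rightarrow P_1\xrightarrow{d_1} P_0\xrightarrow{d_0} M\rightarrow 0,
$$
where each $(P_j,\{\alpha_i^j\})$ is a $\mathbb{Z}^r$-filtered pair of free modules, so that by construction $G^r(P_j)\cong\underline{P}_j$ is free over $G^r(A)$, the maps $d_j$ are strict $\mathbb{Z}^r$-filtered homomorphisms, and $G^r(d_j)=\underline{d}_j$; moreover by Lemma \ref{exactnees original and graded}(a) the associated $\mathbb{Z}^r$-graded complex is a free resolution of $G^r(M)$ (we may take the resolution of $M$ to be the one lifted from a finite free resolution of $G^r(M)$ as in Lemma \ref{resolution filtered to orginial}). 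Applying $(-)^\vee=\uHom_A(-,A)$ gives a cochain complex
$$
0\rightarrow P_0^\vee\xrightarrow{d_1\!^\vee} P_1^\vee\rightarrow\cdots\rightarrow P_{m}^\vee\rightarrow 0,
$$
whose cohomology at the $i$-th spot is $\uExt_A^i(M,A)$, and by Lemma \ref{dual for graded} its associated graded complex is the dual complex $\underline{P}_\bullet^\vee$ over $G^r(A)$, whose cohomology is $\uExt_{G^r(A)}^i(G^r(M),G^r(A))$.

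Next I would endow $\uExt_A^i(M,A)=\Ker(d_{i+1}\!^\vee)/\Image(d_i\!^\vee)$ with the $\mathbb{Z}^r$-filtration induced from the one on $P_i^\vee$ (which satisfies Lemma \ref{properties of Filtration modules}, as noted before Lemma \ref{dual for graded}), and compute $G^r$ of this subquotient. The key point is that $G^r$ is exact on strict short exact sequences and commutes with taking induced filtrations on submodules and quotients: by Corollary \ref{submodules gr eq then they eq}(a,b) one gets
$$
G^r\big(\Ker(d_{i+1}\!^\vee)\big)\cong\Ker\big(G^r(d_{i+1}\!^\vee)\big)=\Ker(\underline{d}_{i+1}\!^\vee),
$$
$$
G^r\big(\Image(d_i\!^\vee)\big)\cong\Image\big(G^r(d_i\!^\vee)\big)=\Image(\underline{d}_i\!^\vee),
$$
where for the first isomorphism one must know that $d_{i+1}\!^\vee$ is strict — which is exactly what the proof of Theorem \ref{key theo 1} records — and for the second that $\Image(d_i\!^\vee)\subseteq\Ker(d_{i+1}\!^\vee)$ so that $G^r$ of the quotient behaves well. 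Then Corollary \ref{submodules gr eq then they eq}(b) applied to $\Image(d_i\!^\vee)\subseteq\Ker(d_{i+1}\!^\vee)$ inside $P_i^\vee$ yields
$$
G^r\big(\uExt_A^i(M,A)\big)=G^r\big(\Ker(d_{i+1}\!^\vee)/\Image(d_i\!^\vee)\big)\cong \Ker(\underline{d}_{i+1}\!^\vee)\big/G^r\big(\Image(d_i\!^\vee)\big),
$$
and since $G^r(\Image(d_i\!^\vee))\cong\Image(\underline{d}_i\!^\vee)$ sits inside $\Ker(\underline{d}_{i+1}\!^\vee)$ as a submodule (though possibly not literally equal to $\Image(\underline{d}_i\!^\vee)$ as submodules of $\underline{P}_i^\vee$ — this is where the word "subquotient" enters rather than "isomorphic"), we obtain that $G^r(\uExt_A^i(M,A))$ is a quotient of a submodule of $\Ker(\underline{d}_{i+1}\!^\vee)/\Image(\underline{d}_i\!^\vee)=\uExt_{G^r(A)}^i(G^r(M),G^r(A))$, i.e.\ a subquotient of it.

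I expect the main obstacle to be the bookkeeping around \emph{strictness} and the precise identification of $G^r(\Image(d_i\!^\vee))$ as a submodule of $\Ker(\underline{d}_{i+1}\!^\vee)$. One does not in general have that $G^r$ carries the image of $d_i\!^\vee$ to the literal image $\Image(\underline{d}_i\!^\vee)$ \emph{inside} the kernel in a way compatible with the quotient — the filtration on $\Image(d_i\!^\vee)$ induced from $P_i^\vee$ need not agree with that induced from $\Ker(d_{i+1}\!^\vee)$ unless the inclusion is strict, which fails precisely when $\uExt_A^i(M,A)\neq0$. This is exactly why the conclusion is only a subquotient relation and not an isomorphism, and the cleanest way around it is to phrase everything in terms of the filtered cochain complex $P_\bullet^\vee$ and invoke Corollary \ref{submodules gr eq then they eq} for the two submodules $\Image(d_i\!^\vee)\subseteq\Ker(d_{i+1}\!^\vee)$ of $P_i^\vee$, reading off the subquotient statement directly; the exactness input from Lemma \ref{exactnees original and graded}(a) guarantees the dualized graded complex genuinely computes $\uExt_{G^r(A)}^i$.
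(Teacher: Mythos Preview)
Your approach has a genuine gap at the point where you invoke strictness of the dual maps $d_i^\vee$. You cite the proof of Theorem \ref{key theo 1} as establishing that $d_{i+1}^\vee$ is strict, but in that theorem strictness was deduced \emph{a posteriori} from Lemma \ref{exactnees original and graded}(b), using the fact that the associated graded complex $\underline{P}_\bullet^\vee$ is exact below degree $n$ --- which in turn rested on the hypothesis that $G^r(A)$ is AS-regular. The present lemma carries no such hypothesis, so you cannot conclude that the maps $d_i^\vee$ are strict, and hence Corollary \ref{submodules gr eq then they eq}(a) is unavailable. You recognize this obstacle in your final paragraph, but the proposed fix (``invoke Corollary \ref{submodules gr eq then they eq} for the two submodules'') does not work either: that corollary says nothing about a pair of nested submodules beyond the quotient formula of part (b), and part (b) alone does not relate $G^r(\Ker d_{i+1}^\vee)$ or $G^r(\Image d_i^\vee)$ to the corresponding kernel and image on the graded side.

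The paper's proof sidesteps strictness entirely by establishing, for an \emph{arbitrary} $\mathbb{Z}^r$-filtered homomorphism $\phi$, the two inclusions
\[
G^r(\Ker\phi)\ \subseteq\ \Ker\big(G^r(\phi)\big) \qquad\text{and}\qquad \Image\big(G^r(\phi)\big)\ \subseteq\ G^r(\Image\phi),
\]
which are checked by a direct elementwise computation at each degree $\alpha$. Applied to $d_{i+1}^\vee$ and $d_i^\vee$ these give a chain
\[
\Image(\underline{d}_i{}^\vee)\ \subseteq\ G^r(\Image d_i^\vee)\ \subseteq\ G^r(\Ker d_{i+1}^\vee)\ \subseteq\ \Ker(\underline{d}_{i+1}{}^\vee),
\]
and then Corollary \ref{submodules gr eq then they eq}(b) identifies $G^r(\uExt_A^i(M,A))$ with $G^r(\Ker d_{i+1}^\vee)/G^r(\Image d_i^\vee)$, which is visibly a subquotient of $\Ker(\underline{d}_{i+1}{}^\vee)/\Image(\underline{d}_i{}^\vee)=\uExt_{G^r(A)}^i(G^r(M),G^r(A))$. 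The missing ingredient in your argument is precisely this pair of elementary inclusions; once you have them, no strictness is needed anywhere.
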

\begin{proof}
We claim that for any $\mathbb{Z}^r$-filtered homomorphism $\phi: N_1\rightarrow N_2$, $G^r(\Ker\phi)\subseteq\Ker(G^r(\phi))$ and $\Image(G^r(\phi))\subseteq G^r(\Image\phi)$, where $N_1,N_2$ are two $\mathbb{Z}^r$-filtered $A$-modules. If it is the case, the conclusion follows from Lemma \ref{resolution filtered to orginial}.

Now we verify the claim. For any $\alpha\in\mathbb{Z}^r$,
$$
G^r(\Ker\phi)_\alpha=\frac{\Ker\phi\bigcap F_\alpha(N_1)}{\Ker\phi\bigcap F_{<\alpha}(N_1)}\cong\frac{\Ker\phi\bigcap F_\alpha(N_1)+F_{<\alpha}(N_1)}{F_{<\alpha}(N_1)}
$$
However,
$$
\Ker(G^r(\phi))_\alpha=\frac{\Ker\phi\bigcap F_\alpha(N_1)+\phi^{-1}(F_{<\alpha}(N_2))\bigcap F_\alpha(N_1)+F_{<\alpha}(N_1)}{F_{<\alpha}(N_1)}.
$$
where $\phi^{-1}(F_{<\alpha}(N_2))=\{n_1\in N_1\;|\; \phi(n_1)\in F_{<\alpha}(N_2)\}$. Obviously, $G^r(\Ker\phi)\subseteq\Ker(G^r(\phi))$.

The proof for $\Image\phi$ is similar.
\end{proof}
We now recall the definition of $j$-number of modules. Let $A$ be a $\mathbb{Z}^r$-graded algebra and $M\in\mathrm{GrMod}\,A$,
$$
j_A(M)=\inf\{i\;|\;\uExt_A^i(M,A)\neq0\}.
$$

\begin{lemma}\label{jnumber}
Let $A$ be a connected graded algebra. If $G^r(M)$ has a finite free resolution for $\mathbb{Z}^r$-filtered $M\in\mathrm{grmod}\,A$ $($resp. $\mathrm{grmod}\,A^o$$)$, then $j_A(M)\geq j_{G^r(A)}(G^r(M))$ $($resp. $j_{A^o}(M)\geq j_{G^r(A)^o}(G^r(M))$$)$.
\end{lemma}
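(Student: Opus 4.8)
The plan is to deduce the inequality on $j$-numbers directly from Lemma \ref{kernel}. Fix $\mathbb{Z}^r$-filtered $M\in\mathrm{grmod}\,A$ and set $j:=j_{G^r(A)}(G^r(M))$. By definition of the $j$-number, $\uExt_{G^r(A)}^i(G^r(M),G^r(A))=0$ for all $i<j$. Since $G^r(M)$ admits a finite free resolution by hypothesis, Lemma \ref{kernel} applies: for every $i\geq 0$ the module $G^r(\uExt_A^i(M,A))$ is a subquotient of $\uExt_{G^r(A)}^i(G^r(M),G^r(A))$. Hence for $i<j$ we get $G^r(\uExt_A^i(M,A))=0$.

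The next step is to pass back from the associated graded to $\uExt_A^i(M,A)$ itself. The point is that the $\mathbb{Z}^r$-filtration used on $\uExt_A^i(M,A)$ is the one induced from the $\mathbb{Z}^r$-filtration on $P_i^\vee$ (in the notation of Lemma \ref{resolution filtered to orginial} and the discussion of the functor $(-)^\vee$ preceding Lemma \ref{dual for graded}), which satisfies the conclusions of Lemma \ref{properties of Filtration modules}; in particular this filtration on any subquotient of $P_i^\vee$ is well-ordering and exhaustive, and for any nonzero element $m$ there is some $\alpha$ with $m\in F_\alpha\setminus F_{<\alpha}$. Consequently $G^r(N)=0$ forces $N=0$ for such a module $N$ (this is exactly the argument used for $G^r(M)\neq 0$ when $M\neq 0$, and in Corollary \ref{submodules gr eq then they eq}(c)). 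Applying this with $N=\uExt_A^i(M,A)$ gives $\uExt_A^i(M,A)=0$ for all $i<j$, which is precisely $j_A(M)\geq j = j_{G^r(A)}(G^r(M))$.

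The case of right modules is entirely parallel: one uses $(-)^\vee=\uHom_{A^o}(-,A)$ together with the $\mathbb{Z}^r$-filtration on $\mathrm{grmod}\,A^o$ defined symmetrically, noting that Lemma \ref{resolution filtered to orginial} and Lemma \ref{kernel} hold verbatim on that side, and the same vanishing argument then yields $j_{A^o}(M)\geq j_{G^r(A)^o}(G^r(M))$.

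I do not expect a serious obstacle here: the substantive work has already been done in Lemma \ref{kernel}, and the only thing to be careful about is that the filtration on $\uExt_A^i(M,A)$ is the induced (good) one, so that the implication ``$G^r(N)=0 \Rightarrow N=0$'' is legitimate — which is guaranteed because $\uExt_A^i(M,A)$ is a subquotient of the finitely generated free module $P_i^\vee$ carrying a filtration of the type \eqref{filtration on free modules}. If anything requires a line of care it is confirming that a subquotient of such a filtered module still has the well-ordering property of Lemma \ref{properties of Filtration modules}, but this was already invoked for $L_2/L_1$ in the proof of Corollary \ref{submodules gr eq then they eq}(c) and carries over unchanged.
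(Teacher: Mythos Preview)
Your argument is correct, but it takes a somewhat different path from the paper's own proof. The paper does not invoke Lemma \ref{kernel} at all: instead it lifts the given finite free resolution of $G^r(M)$ to one of $M$ via Lemma \ref{resolution filtered to orginial}, then uses Lemma \ref{dual for graded} to identify $G^r(P_i^\vee)\cong\underline{P}_i^\vee$ and $G^r(d_i^\vee)=\underline{d}_i^\vee$, and finally applies Lemma \ref{exactnees original and graded}(b) directly to the dual complex to conclude that the sequence $0\to P_0^\vee\to\cdots\to P_t^\vee$ is exact (where $t=j_{G^r(A)}(G^r(M))$). This gives $\uExt_A^i(M,A)=0$ for $i<t$ in one stroke, without ever forming the Ext groups on the associated graded side and without the separate step ``$G^r(N)=0\Rightarrow N=0$''.

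Your route via Lemma \ref{kernel} is legitimate but less economical: Lemma \ref{kernel} already relies on Lemma \ref{resolution filtered to orginial} internally, so you are using the same lifting machinery, then passing through the subquotient statement and a well-ordering argument to undo the passage to associated graded. One small point to tighten: the hypothesis of Lemma \ref{kernel} as stated is that $M$ (not $G^r(M)$) has a finite free resolution; you should note that this follows from the hypothesis of Lemma \ref{jnumber} by Lemma \ref{resolution filtered to orginial}. With that remark your proof is complete, and the advantage of your version is that it isolates the vanishing of Ext as a formal consequence of Lemma \ref{kernel}; the paper's advantage is that it bypasses both Lemma \ref{kernel} and the filtration argument on $\uExt_A^i(M,A)$ entirely.
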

\begin{proof}
Assume $G^r(M)$ has a finite free resolution
$$
\xymatrix @M=2px@C=2pc@R=1pc{%
    	    0\ar[r]&{\underline{P}_m}\ar[r]^(0.4){\underline{d}_{m}}
    &{\underline{P}_{m-1}}\ar[r]^{\underline{d}_{m-1}}
    &\cdots\ar[r]^{\underline{d}_{2}}&{\underline{P}_1}
    \ar[r]^{\underline{d}_{1}}&{\underline{P}_0}\ar[r]^(0.4)
    {\underline{d}_{0}}&{G^r(M)}\ar[r]&0
    ,}%
$$
where $\underline{P}_j=\bigoplus_{i=1}^{s_{j}}G^r(A)\underline{e}^{j}_i$ for $0\leq j\leq m$. By Lemma \ref{resolution filtered to orginial}, $M$ has a free resolution
$$
    \xymatrix @M=2px@C=2pc@R=1pc{%
    	 0\ar[r]&{P_m}\ar[r]^(0.4){{d}_{m}}&{P_{m-1}}\ar[r]^{{d}_{m-1}}
    &\cdots\ar[r]^{{d}_{2}}&{P_1}\ar[r]^{{d}_{1}}&{P_0}\ar[r]^(0.4){{d}_{0}}
    &{M}\ar[r]&0
    ,}%
$$
where $P_j=\bigoplus_{i=1}^{s_{j}}A{e}^{j}_i$ with $\deg{e}^j_i=\mathrm{tdeg\,}\underline{e}^j_i$ and $(P_j,\{\deg \underline{e}_i^j\}_{i=1}^{s_j})$ is a $\mathbb{Z}^r$-filtered pair of free module such that $G^r(P_j)\cong \underline{P}_j$ and $G^r({d}_j)=\underline{d}_j$ for $1\leq i\leq s_j, 0\leq j\leq m.$

Put $t=j_{G^r(A)}(G^r(M))$; that is, the following sequence is exact
$$
    \xymatrix @M=2px@C=2pc@R=1pc{%
    	 {\underline{P}_t^\vee}&{\underline{P}_{t-1}^\vee}
    \ar[l]_{\underline{d}_t\!^\vee}&{\cdots\cdots}
    \ar[l]_{\underline{d}_{t-1}\!^\vee}&{\underline{P}_0^\vee}\ar[l]_(0.4)
    {\underline{d}_0\!^\vee}&0\ar[l]
    }%
$$

By Lemma \ref{dual for graded} and Lemma \ref{exactnees original and graded}(b), we can know the sequence
$$
    \xymatrix @M=2px@C=2pc@R=1pc{%
    	    {P_t^\vee}&{P_{t-1}^\vee}\ar[l]_{{d}_t\!^\vee}&{\cdots\cdots}
    \ar[l]_{{d}_{t-1}\!^\vee}&{P_0^\vee}\ar[l]_(0.4){{d}_0\!^\vee}&0\ar[l]
    }%
$$
is also exact, which implies $j_A(M)\geq t$.
\end{proof}

\begin{proof}[Proof of Theorem \ref{nsnac}] Assume $G^r(A)$ is strongly noetherian. For every commutative noetherian algebra $B$, $G^r(A\otimes B)$ is noetherian by Lemma \ref{tensor}. $A\otimes B $ is noetherian by Remark \ref{remark fro filtration on not connected} and Corollary \ref{submodules gr eq then they eq} immediately.

If $G^r(A)$ is Auslander regular, so $A$ is also noetherian with finite global dimension. For any $M\in\mathrm{grmod}\,A$ and $i\in\mathbb{N}$, $N$ is an $A^o$-submodule of $\uExt^i_A(M,A)$. We need to show $j_{A^o}(N)\geq i$.

    $G^r(N)$ is a subquotient of $\uExt_{G^r(A)}^i(G^r(M),G^r(A))$ by Lemma \ref{kernel}. Therefore, the Auslander condition implies $j_{G^r(A)^o}(G^r(N))\geq i$. However, $j_{A^o}(N)\geq j_{G^r(A)^o}(G^r(N))\geq i$ by Lemma \ref{jnumber}.

    The right ones can be verified similarly.
\end{proof}

By means of Gr\"obner basis,  there is a corollary as the criterion for the regularity.

\begin{corollary}\label{nsnac1}
Let $A=k\langle X\rangle/I$ be a connected graded algebra. Suppose $\mathcal{G}$ is the reduced Gr\"obner basis of $I$ with respect to an admissible ordering $\prec_{\mathbb{Z}^r}$ for some $\mathbb{Z}^r$-grading on $k\langle X\rangle$. If the $\mathbb{Z}^r$-graded algebra ${k\langle X\rangle/(LH(\mathcal{G}))}$ is strongly noetherian and Auslander regular, then so is $A$.
\end{corollary}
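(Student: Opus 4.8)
The plan is to obtain this as an immediate consequence of Theorem \ref{nsnac}, exactly the way Theorem \ref{first theo} is obtained from Theorem \ref{key theo 1}. First I would note that a $\mathbb{Z}^r$-grading on $k\langle X\rangle$ of the type under consideration is nothing but the data of a partition $\{X_1,\dots,X_r\}$ of the generating set $X$, with $\deg^r x=\varepsilon_i$ for $x\in X_i$. This partition induces the $\mathbb{Z}^r$-filtration on $A$ constructed in Section 2.1, whose associated $\mathbb{Z}^r$-graded algebra is, by our standing convention, $G^r(A)$.

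Next I would fix the admissible ordering $\prec_{\mathbb{Z}^r}$ so that the ordering $<$ on $\mathbb{Z}^r$ is its top priority; then the leading homogeneous polynomial $LH(f)$ of any $f\in k\langle X\rangle$ is precisely its $\mathbb{Z}^r$-homogeneous component of top degree, and $LH(\mathcal{G})=\{LH(f)\mid f\in\mathcal{G}\}$ matches the filtration data. Invoking \cite[Theorem 2.3, Chapter 4]{LHS} — as in the proof of Theorem \ref{first theo} — gives an isomorphism $G^r(A)\cong k\langle X\rangle/(LH(\mathcal{G}))$ of $\mathbb{Z}^r$-graded algebras. Consequently the hypothesis that $k\langle X\rangle/(LH(\mathcal{G}))$ is strongly noetherian and Auslander regular passes verbatim to $G^r(A)$, and Theorem \ref{nsnac} then yields that $A$ is strongly noetherian and Auslander regular.

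I do not expect a genuine obstacle here: all the substantive work — the tensor-product computation of Lemma \ref{tensor}, the subquotient estimate of Lemma \ref{kernel}, the inequality of $j$-numbers in Lemma \ref{jnumber}, and their assembly in Theorem \ref{nsnac} — is already in place, and the corollary is merely their translation into Gr\"obner-basis language. The single point that needs a line of care is the compatibility of the given admissible ordering with the filtration, namely that $\prec_{\mathbb{Z}^r}$ may be taken with $<$ on $\mathbb{Z}^r$ as leading priority so that passing to leading homogeneous parts realizes $G^r(A)$; but this is exactly the hypothesis already embedded in the statement and the content of \cite[Theorem 2.3, Chapter 4]{LHS}, so it is a harmless choice rather than a difficulty.
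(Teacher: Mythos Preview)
Your proposal is correct and matches the paper's intended argument: the corollary is stated without an explicit proof in the paper, but it is clearly meant to follow from Theorem~\ref{nsnac} via the identification $G^r(A)\cong k\langle X\rangle/(LH(\mathcal{G}))$ from \cite[Theorem 2.3, Chapter 4]{LHS}, exactly as in the proof of Theorem~\ref{first theo}. Your write-up makes this explicit and even flags the one point of care (compatibility of $\prec_{\mathbb{Z}^r}$ with the filtration ordering), which is indeed harmless here.
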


\begin{remark}
We fail to prove that $A$ is Cohen-Macaulay if $G^r(A)$ is Cohen-Macaulay. It is equivalent to prove $j_{G^r(A)}(G^r(M))=j_A(M)$ for any $M\in\mathrm{grmod\,}A$. We conjecture it is true. For the class of AS-regular algebras $\mathcal{J}$, we will prove it directly in Section 5.
\end{remark}

\section{$A_\infty$-algebra structure of Jordan type}
From this section, we turn to the AS-regular algebras of type $(12221)$.  As mentioned in the introduction, we hope to classify the AS-regular algebras whose Frobenius data is in the Jordan case. We first review the $A_\infty$-algebra structures on the Ext-algebra of the type $(12221)$, the readers may find the details in \cite{LPWZ2}. After that, we concentrate on analyzing and solving the equations getting from the Stasheff identities in Jordan case.

In our case, $A$ is generated by 2 elements $x_1$ and $x_2$ with two relations $r_3$ and $r_4$ whose degrees are 3 and 4, respectively. Denote $E:=E(A)$ the $A_\infty$-Ext-algebras of $A$.
\vspace{2mm}

\subsection{$A_\infty$-Ext-algebras of type (12221)}~

Notice that our $A_\infty$-algebra structures satisfy the strictly unital condition, all multiplications and Stasheff identities can be described without $E^0=k$.
\vspace{2mm}
\subsubsection{Multiplications}~
\vspace{2mm}

According to the minimal resolution (\ref{resolution}) of trivial module $_Ak$, we know
\begin{equation*}
E\cong k\oplus E^1_{-1}\oplus E^2_{-3}\oplus E^2_{-4}\oplus  E^3_{-6}\oplus E^4_{-7},
\end{equation*}
where $\dim E^1_{-1}=\dim E^3_{-6}=2,~\dim E^2_{-3}=\dim E^2_{-4}=\dim E^4_{-7}=1.$

As stated above, all $m_n$ preserves Adams degree. After straightforward computation, we have $m_n=0$ except for $n=2, 3, 4$. The non-trivial multiplications $m_2, m_3, m_4$ are described explicitly in \cite{LPWZ2}. For the sake of computation, we copy them below.

$\bullet \; m_2:$
The possible non-trivial actions of $m_2$ on $E^{\otimes 2}$ are
\begin{eqnarray*}
&E^1_{-1}\otimes E^3_{-6}\rightarrow E^4_{-7},~~~&E^3_{-6}\otimes E^1_{-1}\rightarrow E^4_{-7},\\
&E^2_{-3}\otimes E^2_{-4}\rightarrow E^4_{-7},~~~&E^2_{-4}\otimes E^2_{-3}\rightarrow E^4_{-7}.
\end{eqnarray*}

By Lemma \ref{ASFrobenius}£¬$E$ is a Frobenius algebra. The Frobenius structure on $E$ can be described as follows.
There exists a basis $\{\beta_1,\beta_2\}$ of $E^1_{-1}$, a basis $\{\gamma_1\}$ of $E^2_{-3}$, a basis $\{\gamma_{2}\}$ of $E^2_{-4}$, a basis $\{\xi_1,\xi_2\}$ of $E^3_{-6}$, and a basis $\{\eta\}$ of $E^4_{-7}$ such that
\begin{equation*}
\begin{aligned}
&\gamma_1\gamma_2=\eta,&&\gamma_2\gamma_1=t\eta,&&t\in k,\\
&\beta_i\xi_j=\delta_{ij}\eta,&&\xi_i\beta_j=r_{ij}\eta,&&r_{ij}\in k.
\end{aligned}
\end{equation*}
where $t\neq 0, \mathcal{R}=(r_{ij})$ is nonsingular. $(\mathcal{R},t)$ is called the \emph{Frobenius data} of $E$.

Since $k$ is algebraic closed, $\mathcal{R}$ is similar to a diagonal matrix or a Jordan block; that is,
\begin{equation*}
\left(
\begin{array}{cc}
g_1&0\\
0  &g_2
\end{array}
\right)
\quad\quad \text{or} \quad\quad
\left(
\begin{array}{cc}
g&1\\
0  &g
\end{array}
\right).
\end{equation*}
We will focus on the later case which is called \emph{Jordan type}.

$\bullet \; m_3:$ Possible nonzero components of $m_3$ on $E^{\otimes 3}$ are
\begin{eqnarray*}
~&(E^1_{-1})^{\otimes 3}\rightarrow E^2_{-3},&~\\
(E^1_{-1})^{\otimes 2}\otimes E^2_{-4}\rightarrow E^3_{-6},&E^1_{-1} \otimes E^2_{-4}\otimes E^1_{-1} \rightarrow E^3_{-6}, &E^2_{-4}\otimes (E^1_{-1})^{\otimes 2}\rightarrow E^3_{-6},\\
E^1_{-1}\otimes(E^2_{-3})^{\otimes2}\rightarrow E^4_{-7},&E^2_{-3}\otimes E^1_{-1}\otimes E^2_{-3}\rightarrow E^4_{-7},&(E^2_{-3})^{\otimes2}\otimes E^1_{-1}\rightarrow E^4_{-7}.
\end{eqnarray*}
For $1\leq i,j,k\leq2$, we have
\begin{equation*}
\begin{array}{cr}
\multicolumn{2}{c}{m_3(\beta_i,\beta_j,\beta_k)=a_{ijk}\gamma_1,}\\
m_3(\beta_i,\beta_j,\gamma_2)=b_{13ij}\xi_1+b_{23ij}\xi_{2},&m_3(\beta_i,\gamma_1,\gamma_1)=c_{1i}\eta,\\
m_3(\beta_i,\gamma_2,\beta_j)=b_{12ij} \xi_1+b_{22ij}\xi_{2},&m_3(\gamma_1,\beta_i,\gamma_1)=c_{2i}\eta,\\
m_3(\gamma_2,\beta_i,\beta_j)=b_{11ij}\xi_1+b_{21ij}\xi_{2},&m_3(\gamma_1,\gamma_1,\beta_i)=c_{3i}\eta.
\end{array}
\end{equation*}
where the coefficients are scalars in $k$.

$\bullet \; m_4:$
The possible non-trivial actions of $m_4$ on $E^{\otimes 4}$ are
\begin{equation*}
\begin{array}{cccccc}
~&~~~~~~~\qquad\qquad\quad\quad\quad&\multicolumn{2}{r}{(E^1_{-1})^{\otimes 4}\rightarrow E^2_{-4},}&~&~\\
\multicolumn{3}{l}{(E^1_{-1})^{\otimes3}\otimes E^2_{-3}\rightarrow E^3_{-6},}
&\multicolumn{3}{r}{E^2_{-3}\otimes(E^1_{-1})^{\otimes3}\rightarrow E^3_{-6},}\\
\multicolumn{3}{c}{E^1_{-1}\otimes E^2_{-3}\otimes (E^1_{-1})^{\otimes2}\rightarrow E^3_{-6},}
&\multicolumn{3}{c}{(E^1_{-1})^{\otimes2}\otimes E^2_{-3}\otimes E^1_{-1}\rightarrow E^3_{-6}.}
\end{array}
\end{equation*}
For $1\leq i,j,k,h\leq2$, we have
\begin{equation*}
\begin{array}{cc}
\multicolumn{2}{c} {m_4(\beta_i,\beta_j,\beta_k,\beta_h)=v_{ijkh}\gamma_2,}\\
m_4(\beta_i,\beta_j,\beta_k,\gamma_1)=u_{14ijk}\xi_1+u_{24ijk}\xi_{2},&m_4(\beta_i,\beta_j,\gamma_1,\beta_k)=u_{13ijk}\xi_1+u_{23ijk}\xi_{2},\\
m_4(\beta_i,\gamma_1,\beta_j,\beta_k)=u_{12ijk}\xi_1+u_{22ijk}\xi_{2},&m_4(\gamma_1,\beta_i,\beta_j,\beta_k)=u_{11ijk}\xi_1+u_{21ijk}\xi_{2}.
\end{array}
\end{equation*}
where the coefficients are scalars in $k$.
\vspace{2mm}
\subsubsection{Stasheff identities for the $A_\infty$-algebra $E$}~

\vspace{2mm}
The non-trivial Stasheff identities are just \textsf{SI(4)}, \textsf{SI(5)}, \textsf{SI(6)}.

$\bullet$ \textsf{SI(4)}: Since $m_1=0$, \textsf{SI(4)} becomes
$$
m_3(m_2\otimes id^{\otimes 2}-id\otimes m_2\otimes id+ id^{\otimes 2}\otimes m_2)-m_2(m_3\otimes id+id\otimes m_3)=0.
$$
Applying it to the basis of $E$, the non-trivial ones give the relationships between the coefficients.
\begin{equation*}\label{SI(4a)}
\begin{array}{lll}
a_{ijk}=b_{i3jk},&b_{i2jk}=\sum_{s=1}^2r_{sk}b_{s3ij},&~\\
b_{i1jk}=\sum_{s=1}^2r_{sk}b_{s2ij},&-ta_{ijk}=\sum_{s=1}^2r_{sk}b_{s1ij},&\raisebox{1.5ex}[0pt]{ for $1\leq i,j,k\leq2.$}
\end{array}\tag{SI(4a)}
\end{equation*}
Immediately, we have
\begin{equation*}\label{SI(4b)}
-ta_{ijk}=\sum_{s,t,u=1}^2r_{sk}r_{tj}r_{ui}a_{uts}, \tag{SI(4b)}\quad\mathrm{for\;}1\leq i,j,k\leq2.
\end{equation*}

$\bullet$ \textsf{SI(5)}: The Stasheff identity \textsf{SI(5)} is equivalent to
\begin{eqnarray*}
&&m_4(m_2\otimes id^{\otimes3}-id\otimes m_2\otimes id^{\otimes2}+id^{\otimes2}\otimes m_2\otimes id-id^{\otimes3}\otimes m_2)\\
&&\quad +m_3(m_3\otimes id^{\otimes2}+id\otimes m_3\otimes id+id^{\otimes2}\otimes m_3)+m_2(m_4\otimes id-id\otimes m_4)=0.
\end{eqnarray*}
Then it follows that \textsf{SI(5)} holds if and only if for $1\leq i,j,k,h\leq2$,
\begin{equation*}\label{SI(5a)}
\begin{array}{l}
a_{ijk}c_{2h}-a_{jkh}c_{1i}+tv_{ijkh}-u_{i4jkh}=0,\\
a_{ijk}c_{3h}+r_{1h}u_{14ijk}+r_{2h}u_{24ijk}-u_{i3jkh}=0,\\
r_{1h}u_{13ijk}+r_{2h}u_{23ijk}-u_{i2jkh}=0,\\
c_{1i}a_{jkh}-r_{1h}u_{12ijk}-r_{2h}u_{22ijk}+u_{i1jkh}=0,\\
a_{jkh}c_{2i}-a_{ijk}c_{3h}-r_{1h}u_{11ijk}-r_{2h}u_{21ijk}+v_{ijkh}=0.
\end{array}\tag{SI(5a)}
\end{equation*}

$\bullet$ \textsf{SI(6)}: The Stasheff identity \textsf{SI(6)} becomes
\begin{eqnarray*}
&&m_4(-m_3\otimes id^{\otimes3}-id\otimes m_3\otimes id^{\otimes2}-id^{\otimes3}\otimes m_3\otimes id-id^{\otimes3}\otimes m_3)\\
&&\quad +m_3(m_4\otimes id^{\otimes2}-id\otimes m_4\otimes id+id^{\otimes2}\otimes m_4)=0.
\end{eqnarray*}
Applying it to the basis of $E$, all are trivial except for $(\beta_i,\beta_j,\beta_k,\beta_h,\beta_m,\beta_n)$. We obtain
\begin{equation*}\label{SI(6a)}
\begin{array}{cc}
-a_{ijk}u_{s1hmn}+a_{jkh}u_{s2imn}-a_{khm}u_{s3ijn}+a_{hmn}u_{s4ijk}&~
\\+b_{s1mn}v_{ijkh}-b_{s2in}v_{jkhm}+b_{s3ij}v_{khmn}=0,&\raisebox{1.5ex}[0pt]{for $1\leq i,j,k,h,m,n,s\leq2$.}
\end{array}\tag{SI(6a)}
\end{equation*}

Since $E^2=E^2_{-3}\oplus E^2_{-4}$, the relations $R=\{r_3,r_4\}$ where $\deg r_3=3$ and $\deg r_4=4$. By Lemma \ref{recover} and the $A_\infty$-algebra structure on $E$ described above, we can write
\begin{eqnarray*}
&&r_3=\sum_{1\leq i, j, k\leq2}a_{ijk}x_ix_jx_k,\\
&&r_4=\sum_{1\leq i, j, k, h\leq2}v_{ijkh}x_ix_jx_kx_h.
\end{eqnarray*}
Furthermore, $r_3$ and $r_4$ are neither zero nor a product of lower-degree polynomials since $A$ is a domain.
\vspace{1mm}
\subsection{Jordan type}~

We now concentrate on the Jordan type. We write
\begin{equation*}
\mathcal{R}=\left(
\begin{array}{cc}
-g&1\\
0&-g
\end{array}
\right).
\end{equation*}

Next, we work with $m_3$ by considering \textsf{SI(4)} to describe $r_3$. By \ref{SI(4b)}, we have
\begin{equation*}\label{SI(4c)}
\begin{array}{cr}
\multicolumn{2}{c}{(t-g^3)a_{111}=0,}\\
(t-g^3)a_{112}+g^2a_{111}=0,&(t-g^3)a_{212}+g^2(a_{211}+a_{112})-ga_{111}=0,\\
(t-g^3)a_{121}+g^2a_{111}=0,&(t-g^3)a_{221}+g^2(a_{211}+a_{121})-ga_{111}=0,\\
(t-g^3)a_{211}+g^2a_{111}=0,&(t-g^3)a_{122}+g^2(a_{112}+a_{121})-ga_{111}=0,\\
\multicolumn{2}{c}{(t-g^3)a_{222}+g^2(a_{221}+a_{212}+a_{122})-g(a_{112}+a_{121}+a_{211})+a_{111}=0.}
\end{array}\tag{SI(4c)}
\end{equation*}

If $t-g^3\neq 0$, all $a_{ijk}=0$ which implies $r_3=0$. Therefore
\begin{equation*}
t=g^3.
\end{equation*}
From (\ref{SI(4c)}), it is easy to obtain
\begin{equation*}
\left\{\begin{array}{l}a_{111}=a_{112}=a_{121}=a_{211}=0,\\
a_{221}+a_{212}+a_{122}=0.
\end{array}
\right.
\end{equation*}
Hence $r_3=a_{122}x_1x_2^2+a_{212}x_2x_1x_2+a_{221}x_2^2x_1+a_{222}x_2^3$.
Moreover, it is easy to see $a_{122}a_{221}\ne 0$ since $A$ is a domain, we write $a_{122}=1,\; a_{221}=p\neq 0 \;\; \text{and}\;\; a_{222}=w, \; a_{212}=-(1+p)$. So
$$
r_3=x_1x_2^2-(1+p)x_2x_1x_2+px_2^2x_1+wx_2^3.
$$

We get the solutions for $b_{isjk}$ from (\ref{SI(4a)}),
\begin{equation*}
\begin{array}{llll}
b_{1322}=1,      &b_{2312}=-(1+p),&b_{2321}=p        ,&b_{2322}=w,\\
b_{1222}=g(1+p),&b_{2212}=-gp    ,&b_{2221}=-g        ,&b_{2222}=1-gw,\\
b_{1122}=g^2p   ,&b_{2112}=g^2   ,&b_{2121}=-g^2(1+p),&b_{2122}=gp+g^2w,\\
\multicolumn{4}{c}{\text{the other of } b_{ijkh} \text{ are zero.} }
\end{array}
\end{equation*}

Then consider \textsf{SI(5)} to describe $r_4$. By replacing $r_4$ with the equivalent relation
$$
r_4-v_{1122}x_1r_3-v_{2122}x_2r_3-v_{1221}r_3x_1-v_{1222}r_3x_2,
$$
we may assume that
\begin{equation*}
v_{1122}=v_{2122}=v_{1221}=v_{1222}=0.
\end{equation*}

Using (\ref{SI(5a)}) recursively to eliminate $u_{isjkh}$, we obtain equations:
\begin{eqnarray}
&&(1-g^4t)v_{1111}=0,\label{SI(5a)1}\\
&&(1-g^4t)v_{1112}=-g^3tv_{1111},\label{SI(5a)2}\\
&&(1-g^4t)v_{1121}=-g^3tv_{1111},\label{SI(5a)3}\\
&&(1-g^4t)v_{1122}=-g^3t(v_{1112}+v_{1121})+g^2tv_{1111}-(g^4c_{11}+c_{21}+g^3c_{31})=0,\label{SI(5a)4}\\
&&(1-g^4t)v_{1211}=-g^3tv_{1111},\label{SI(5a)5}\\
&&(1-g^4t)v_{1212}=-g^3t(v_{1112}+v_{1211})+g^2tv_{1111}+(1+p)(g^4c_{11}+c_{21}+g^3c_{31}),\label{SI(5a)6}\\
&&\begin{aligned}
(1-g^4t)v_{1221}=&-g^3t(v_{1121}+v_{1211})+g^2tv_{1111}\\
                 &-p(g^4c_{11}+c_{21}+g^3c_{31})+gc_{11}+g^4c_{21}+c_{31}=0,
\end{aligned}\label{SI(5a)7}\\
&&\begin{aligned}
(1-g^4t)v_{1222}=&-g^3tv_{1212}+g^2t(v_{1112}+v_{1121}+v_{1211})-gtv_{1111}\\
                 &-c_{11}-g^3c_{21}-w(g^4c_{11}+c_{21}+g^3c_{31})+gc_{12}+g^4c_{22}+c_{32}=0,
\end{aligned}\label{SI(5a)8}\\
&&(1-g^4t)v_{2111}=-g^3tv_{1111},\label{SI(5a)9}\\
&&(1-g^4t)v_{2112}=-g^3t(v_{1112}+v_{2111})+g^2tv_{1111},\label{SI(5a)10}\\
&&(1-g^4t)v_{2121}=-g^3t(v_{1121}+v_{2111})+g^2tv_{1111}-(1+p)(gc_{11}+g^4c_{21}+c_{31}),\label{SI(5a)11}\\
&&\begin{aligned}
(1-g^4t)v_{2122}=&-g^3t(v_{2112}+v_{2121})+g^2t(v_{1112}+v_{1121}+v_{2111})-gtv_{1111}\\
                 &+(1+p+g^3)c_{11}+g^3(1+p)c_{21}\\
                 &-(1+p+g^3)gc_{12}-(g^4(1+p)+1)c_{22}-(g^3+p+1)c_{32}=0,
\end{aligned}\label{SI(5a)12}\\
&&(1-g^4t)v_{2211}=-g^3t(v_{1211}+v_{2111})+g^2tv_{1111}+p(gc_{11}+g^4c_{21}+c_{31}),\label{SI(5a)13}\\
&&\begin{aligned}
(1-g^4t)v_{2212}=&-g^3t(v_{1212}+v_{2112}+v_{2211})+g^2t(v_{1112}+v_{1211}+v_{2111})-gtv_{1111}\\
                 &+(-g^3(1+p)-p)c_{11}-g^3pc_{21}\\
                 &+(g^4(1+p)+gp)c_{12}+(g^4p+1+p)c_{22}+(p+g^3(1+p))c_{32},
\end{aligned}\label{SI(5a)14}\\
&&\begin{aligned}
(1-g^4t)v_{2221}=&-g^3t(v_{2121}+v_{2211})+g^2t(v_{1121}+v_{1211}+v_{2111})-gtv_{1111}\\
                 &+w(gc_{11}+g^4c_{21}+c_{31})+g^3pc_{11}\\
                 &-p(g^4c_{12}+c_{22}+g^3c_{32}),
\end{aligned}\label{SI(5a)15}\\
&&\begin{aligned}
(1-g^4t)v_{2222}=&-g^3t(v_{2212}+v_{2221})+g^2t(v_{1212}+v_{2112}+v_{2121}+v_{2211})\\
                 &-gt(v_{1112}+v_{1121}+v_{1211}+v_{2111})+tv_{1111}\\
                 &+w(g^3c_{11}-c_{11}-g^3c_{21})\\
                 &+w((-g^4+g)c_{12}+(g^4-1)c_{22}+(1-g^3)c_{32}).
\end{aligned}\label{SI(5a)16}
\end{eqnarray}

If $1-g^4t\neq 0$, then all $v_{1ijk}=0$, which implies $x_2$ is a zero divisor, contrary to our assumption. Hence
\begin{equation*}
g^4t=1, \quad \quad g^7=1.
\end{equation*}

From (\ref{SI(5a)2}), we get $v_{1111}=0$. Hence (\ref{SI(5a)4}), (\ref{SI(5a)6}), (\ref{SI(5a)7}) become
\begin{eqnarray}
&&v_{1112}+v_{1121}=-gM,\label{SI(5a)41}\\
&&v_{1112}+v_{1211}=(1+p)gM,\label{SI(5a)61}\\
&&v_{1121}+v_{1211}=(g^4-p)gM,\label{SI(5a)71}
\end{eqnarray}
where $M:=g^4c_{11}+c_{21}+g^3c_{31}$. The equations
(\ref{SI(5a)10}), (\ref{SI(5a)11}), (\ref{SI(5a)13}) become
\begin{eqnarray}
&&v_{1112}+v_{2111}=0,\label{SI(5a)101}\\
&&v_{1121}+v_{2111}=-(1+p)g^5M,\label{SI(5a)111}\\
&&v_{1211}+v_{2111}=pg^5M,\label{SI(5a)131}
\end{eqnarray}

The equations (\ref{SI(5a)41})-(\ref{SI(5a)131}) hold is same to the following equations to be held
\begin{eqnarray*}
&-gM=(g^4-p-pg^4)gM,\\
&(1+p)gM=(2g^4-p+pg^4)gM.
\end{eqnarray*}
Hence we have two cases:
\begin{description}
\item[Case 1]  $M=0$.
\item[Case 2]  $M\neq 0, g=1, p=1$.
\end{description}

\section{Regular algebras of Jordan type}
We continue to analyze the $A_\infty$-algebra structures this section. We solve all the algebras corresponding to Case 1 and Case 2, and prove that there is one class of AS-regular algebras in Case 2, and no AS-regular algebra in Case 1.

\begin{proposition}
Suppose that $A$ is an AS-regular algebra of type $(12221)$ which is Jordan type, then Case 1 gives no AS-regular algebras and Case 2 gives exactly one class of AS-regular algebras.
\end{proposition}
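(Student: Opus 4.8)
The plan is to complete, separately in each of the two surviving cases, the solution of the remaining Stasheff identities so as to pin down the quartic relation $r_4$, and then to decide Artin--Schelter regularity of the resulting candidate algebra $A=k\langle x_1,x_2\rangle/(r_3,r_4)$ against the prescribed resolution type $(12221)$.

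\emph{Case 1 ($M=0$).} Substituting $M=0$ into the reduced consequences of \textsf{SI(5)}, together with $v_{1111}=0$ and the normalization $v_{1122}=v_{2122}=v_{1221}=v_{1222}=0$, turns the relations among the $v_{ijkh}$ into a homogeneous linear system; I would solve it to write $r_4$ in terms of the surviving data ($g$ a seventh root of unity with $t=g^3$, the scalar $p\neq0$, and $w$), and then feed the solution into \textsf{SI(6)}. The expectation is that the combined constraints degenerate so far that $A$ cannot be AS-regular of type $(12221)$: concretely, one shows that $(r_3,r_4)$ fails to be a regular sequence --- e.g.\ $r_4$ lies in the two-sided ideal $(r_3)$, or $A$ acquires zero-divisors, or a Gr\"obner-basis computation produces a Hilbert series different from $\tfrac{1}{(1-t)^3(1-t^2)(1-t^3)}$. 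In any of these outcomes the minimal resolution (\ref{resolution}) of $_Ak$ is impossible (so (AS1)/(AS3) fail, compare Theorem~\ref{ASFrobenius}), and Case~1 yields no AS-regular algebra.

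\emph{Case 2 ($M\neq0$, $g=1$, $p=1$).} Now $t=1$ and $\mathcal{R}=\left(\begin{smallmatrix}-1&1\\0&-1\end{smallmatrix}\right)$, whence $r_3=x_1x_2^2-2x_2x_1x_2+x_2^2x_1+wx_2^3$. I would solve the \textsf{SI(5)} system for the $v_{ijkh}$ (rescaling $M\neq0$ to a convenient value) and then impose \textsf{SI(6)}; after the renaming $x_1=x$, $x_2=y$ and relabelling of the remaining free scalars, the output should be exactly the pair $f_1,f_2$ of the second theorem of the introduction, so the AS-regular candidates produced in Case~2 are precisely the members of the family $\mathcal{J}=\mathcal{J}(u,v,w)$ (that these form a single isomorphism class then follows, either by a direct change of variables or as recorded in that theorem). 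It remains to prove that every $\mathcal{J}(u,v,w)$ is genuinely AS-regular, and here I would invoke the regularity criterion Theorem~\ref{first theo} (equivalently Theorem~\ref{key theo 1}): put the $\mathbb{Z}^2$-grading $\deg^2 x=(1,0)$, $\deg^2 y=(0,1)$ on $k\langle x,y\rangle$ (the partition $X_1=\{x\}$, $X_2=\{y\}$), fix a monomial order with $x\succ y$ refining $\prec_{\mathbb{Z}^2}$, compute the reduced Gr\"obner basis $\mathcal{G}$ of $(f_1,f_2)$, and identify $G^2(\mathcal{J})\cong k\langle x,y\rangle/(LH(\mathcal{G}))$. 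One then verifies that this associated $\mathbb{Z}^2$-graded algebra is AS-regular of global dimension $4$ --- either by recognizing it among the known $\mathbb{Z}^2$-graded AS-regular algebras of type $(12221)$, or directly by exhibiting a $\mathbb{Z}^2$-graded free resolution of $k$ of the shape (\ref{resolution}) and checking the Gorenstein condition (equivalently $H(t)=\tfrac{1}{(1-t)^3(1-t^2)(1-t^3)}$ and $E$ Frobenius). Theorem~\ref{first theo} then transfers AS-regularity to $\mathcal{J}$ itself, so Case~2 gives exactly one class.

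The principal obstacle is the verification in Case~2: computing the reduced Gr\"obner basis of $(f_1,f_2)$ under $\prec_{\mathbb{Z}^2}$ --- all overlap ambiguities must resolve, and $LH(\mathcal{G})$ must be read off correctly --- and then certifying that the resulting $\mathbb{Z}^2$-graded algebra is AS-regular. A secondary, more bookkeeping-heavy difficulty is organizing the coupled linear systems arising from \textsf{SI(5)} and \textsf{SI(6)} (the $v_{ijkh}$ together with the eliminated $u_{isjkh}$) so that Case~1 visibly collapses to a non-regular algebra while Case~2 reproduces precisely the three-parameter family $\mathcal{J}(u,v,w)$ and nothing more.
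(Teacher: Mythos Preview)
Your plan is essentially the paper's own argument. In Case~2 the paper does exactly what you propose: it solves \textsf{SI(5)}/\textsf{SI(6)} (via Maple) to obtain the single solution family $\mathcal{J}(u,v,w)$, computes the reduced Gr\"obner basis $\{f_1,f_2,f_3\}$ of $(f_1,f_2)$ under $\prec_{\mathbb{Z}^2}$, observes that $LH(f_3)$ lies in $(LH(f_1),LH(f_2))$, and recognizes $G^2(\mathcal{J})$ as the known AS-regular algebra $D(-2,-1)$ of \cite{LPWZ2}, so Theorem~\ref{first theo} finishes. In Case~1 the paper likewise solves the Stasheff system completely (obtaining five explicit solutions) and rules each out by a Hilbert-series comparison via Gr\"obner bases and Lemma~\ref{low degree LM and Hilbert}; of the three failure modes you list, it is this last one that actually occurs --- the relations remain independent and the algebras need not have obvious zero-divisors, so your alternatives ``$r_4\in(r_3)$'' or ``$A$ acquires zero-divisors'' would not succeed. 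One small correction: in Case~2 the constraint from \textsf{SI(6)} forces the cubic coefficient $a_{222}$ to vanish, so $r_3=x_1x_2^2-2x_2x_1x_2+x_2^2x_1$ without the $wx_2^3$ term (the parameter $w$ in $\mathcal{J}(u,v,w)$ is the free coefficient $v_{2222}$ in $r_4$, not the original $a_{222}$).
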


\subsection{Case 1: non-AS-regular algebras}

If $M=0$, the equations (\ref{SI(5a)41})-(\ref{SI(5a)131}) tell that
\begin{equation*}
v_{1112}=v_{1121}=v_{1211}=v_{2111}=0.
\end{equation*}
Then
\begin{equation*}
\begin{aligned}
r_4=&v_{1212}x_1x_2x_1x_2+v_{2112}x_2x_1^2x_2+v_{2121}x_2x_1x_2x_1+v_{2211}x_2^2x_1^2\\
    &+v_{2212}x_2^2x_1x_2+v_{2221}x_2^3x_1+v_{2222}x_2^4.
\end{aligned}
\end{equation*}
Since $A$ is a domain, $v_{1212}$ must be nonzero. Hence we assume
$v_{1212}=1$.
Now
\begin{equation*}
\begin{aligned}
r_4=&x_1x_2x_1x_2+v_{2112}x_2x_1^2x_2+v_{2121}x_2x_1x_2x_1+v_{2211}x_2^2x_1^2\\
    &+v_{2212}x_2^2x_1x_2+v_{2221}x_2^3x_1+v_{2222}x_2^4.
\end{aligned}
\end{equation*}

Next, we start to perform the computations. Find expressions of $\{v_{1212}, v_{2112}, v_{2121}, v_{2211}, v_{2212}\}$ from (\ref{SI(5a)1})-(\ref{SI(5a)16}) which are represented by $\{g,p,$ $w,c_{11}, c_{21}, c_{12}, c_{22}, c_{32}, v_{2221}\}$, and formulas of $u_{isjkh}$ from \ref{SI(5a)}. We omit them because of its length. Then input $\{a_{ijk}, b_{stmn}, u_{isjkh}, v_{ijkh}\}$ into \ref{SI(6a)}. This produces $2^7 $ equations involving the variables $\{g,p,w,c_{11},c_{21},c_{12},c_{22},c_{32},v_{2221},v_{2222}\}$. We compute those and solve the equations by Maple.

After deleting useless solutions, we have five different solutions in total. Input them into the coefficients of $r_3,r_4$ as listed below.

\begin{description}
\item[Solution 1]
\begin{equation*}\label{S1}
\begin{aligned}
&\quad\quad~~~ g=1,\quad\quad p=1,\quad\quad w=0,\\
&v_{1212}=1,~~v_{2112}= -1,~~v_{2121}=-1,~~v_{2211}= 1,\\
&\quad~ v_{2212}=c_{22},\quad v_{2221}=-c_{22},\quad v_{2222}= v_{2222}.
\end{aligned}\tag{S1}
\end{equation*}

\item[Solution 2]
\begin{equation*}\label{S2}
\begin{aligned}
&\quad\quad~~~ g=1,\quad\quad p=-1,\quad\quad w=w,\\
&~~v_{1212}=1,~~v_{2112}= 1,~~v_{2121}=1,~~v_{2211}=-3,\\
&v_{2212}=1-\frac{w}{2},\quad v_{2221}=\frac{7w}{2}-1,\quad v_{2222}=-\frac{3w^2}{2}+\frac{w}{2}.\\
\end{aligned}\tag{S2}
\end{equation*}

\item[Solution 3]
\begin{equation*}\label{S3}
\begin{aligned}
&\quad\quad~~~ g=1,\quad\quad p=-1,\quad\quad w=\frac{2}{7},\\
&v_{1212}=1,~~v_{2112}= 1,~~v_{2121}=1,~~v_{2211}=-3,\\
&\quad~ v_{2212}=\frac{6}{7},~\quad v_{2221}=0,~\quad v_{2222}= v_{2222}.
\end{aligned}\tag{S3}
\end{equation*}

\item[Solution 4]
\begin{equation*}\label{S4}
\begin{aligned}
&\quad\quad~~~ g=j ,\quad\quad p=-j ^3,\quad\quad w=w,\\
&~~v_{1212}=1,~~~v_{2112}=j,~~~v_{2121}=-j ^6-j ^2-2j -2,\\
&v_{2211}=j ^6+j ^2+j +1,v_{2212}=-w(\frac{j^4}{2}+2j ^3+3j ^2+2+\frac{7j}{2})+\frac{j ^6+1}{2},\\ &v_{2221}=w(j ^5+\frac{3j^4}{2}+2j ^3+3j ^2+\frac{7j}{2}+3)-\frac{j ^6+1}{2},\\ &v_{2222}=\frac12\big({w^2(-4j ^5+10j ^3+14j ^2+13j +6)-w(j ^3+2j ^2+2j +1)}\big).
\end{aligned}\tag{S4}
\end{equation*}

\item[Solution 5]
\begin{equation*}\label{S5}
\begin{aligned}
&\quad\quad~~~ g=j,\quad\quad p=j^2,\quad w=c_{22}(-j^6+j^5),\\
&~~v_{1212}=1,~~~v_{2112}=-1,~~~v_{2121}=-j^2,\\
&v_{2211}=j^2,v_{2212}=(j^4-j^3+j^2)c_{22},\\
&v_{2221}=c_{22}(2j^5+2j^3+j+1),\\
&v_{2222}=c_{22}^2(j^6-2j^5-j^3-j^2-2).
\end{aligned}\tag{S5}
\end{equation*}
\end{description}
where $j^6+j^5+j^4+j^3+j^2+j+1=0$.
\vspace{2mm}

We check Hilbert series of them by using Diamond Lemma \cite{B} to calculate the Gr\"obner bases. Before that, we show a lemma to help us compare the Hilbert series with other series in low degrees. Following we fix an arbitrary monomial ordering on $X^*$. For any $u,v\in X^*$, we say $v$ is a \emph{factor} of $u$, if there exist $w,w'\in X^*$ such that $u=wvw'$ denoted by $v|u$. Let any nonzero polynomial $f\in k\langle X\rangle$, the \emph{leading monomial} $LM(f)$ of $f$ is the largest monomial in $f$. Let $\mathcal{G}$ be the reduced monic Gr\"obner basis of $I$, and $\mathcal{G}=\bigcup_i\mathcal{G}_i$ where $\mathcal{G}_i=\{f\in \mathcal{G}\;|\;\deg f\leq i\}$. Then the set
$$
NW(\mathcal{G})=\{u\in X^*\; |\; LM(g)\nmid u\text{ for any }g\in \mathcal{G}\}=\bigcup_iNW(\mathcal{G})_i
$$
is a $k$-basis of $A$, where $NW(\mathcal{G})_i$ consists of the elements of degree $i$ in $NW(\mathcal{G})$. Hence $\dim_kA_m=\#(NW(\mathcal{G})_m)$. Notice that $NW(\mathcal{G})_i=\{u\in X^*\;|\;LM(g)\nmid u\text{ for  any }g \in \mathcal{G}_i\text{ and }\deg u=i\}$.

\begin{lemma}\label{low degree LM and Hilbert}
Let $A=k\langle X \rangle/I$  be a connected graded algebra, $\mathcal{G}$ is a Gr\"obner basis of $I$, and let $A'=k\langle X\rangle/(LM(\mathcal{G}_m))$. Then $H_{A'}(t)-H_A(t)=\sum_{i>m}a_it^i$ with $a_i\geq0$.
\end{lemma}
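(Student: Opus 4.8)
We must show: if $\mathcal{G}$ is a Gröbner basis of $I$ and $A' = k\langle X\rangle/(LM(\mathcal{G}_m))$ with $\mathcal{G}_m = \{f \in \mathcal{G} : \deg f \le m\}$, then $H_{A'}(t) - H_A(t) = \sum_{i > m} a_i t^i$ with all $a_i \ge 0$.

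The plan is to compare the two algebras degree-by-degree using the normal-word bases described just before the lemma. First I would pass from $A$ to its "monomial" version $A'' = k\langle X\rangle/(LM(\mathcal{G}))$; by the Diamond Lemma (or directly from the fact that $NW(\mathcal{G})$ is a $k$-basis of $A$), we have $H_A(t) = H_{A''}(t)$, so it suffices to compare $A'$ with $A''$. Both are now monomial algebras: $A' = k\langle X\rangle/(LM(\mathcal{G}_m))$ and $A'' = k\langle X\rangle/(LM(\mathcal{G}))$. Since $LM(\mathcal{G}_m) \subseteq LM(\mathcal{G})$, the defining ideal of $A''$ contains that of $A'$, hence there is a surjection $A' \twoheadrightarrow A''$ of graded algebras, giving $\dim_k A'_i \ge \dim_k A''_i = \dim_k A_i$ for every $i$; thus every coefficient $a_i := \dim_k A'_i - \dim_k A_i$ is $\ge 0$. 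This already yields $H_{A'}(t) - H_A(t) = \sum_i a_i t^i$ with $a_i \ge 0$.

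The remaining point — and the only one requiring a small argument — is that $a_i = 0$ for all $i \le m$, i.e. the two Hilbert series agree through degree $m$. For this I would use the explicit normal-word description: $\dim_k A'_i = \#(NW(LM(\mathcal{G}_m))_i)$ and $\dim_k A_i = \#(NW(\mathcal{G})_i)$, where a monomial $u$ of degree $i$ lies in the first set iff no $LM(g)$ with $g \in \mathcal{G}_m$ divides $u$, and in the second iff no $LM(g)$ with $g \in \mathcal{G}$ (any degree) divides $u$. For $i \le m$, any $g \in \mathcal{G}$ with $LM(g) \mid u$ must have $\deg LM(g) \le \deg u = i \le m$, so $g \in \mathcal{G}_m$; hence the divisibility conditions defining $NW(LM(\mathcal{G}_m))_i$ and $NW(\mathcal{G})_i$ coincide, the two sets are equal, and $a_i = 0$. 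Therefore $H_{A'}(t) - H_A(t) = \sum_{i > m} a_i t^i$ with $a_i \ge 0$, as claimed.

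I don't expect a serious obstacle here; the one thing to be careful about is that the reduced Gröbner basis being used is $\mathbb{Z}^r$-graded/monic and that "$\mathcal{G}_m$" is indexed by ordinary degree $\deg$, so that the key implication "$LM(g) \mid u$ and $\deg u \le m$ $\Rightarrow$ $g \in \mathcal{G}_m$" genuinely holds; this is immediate since divisibility of monomials is degree-monotone. One should also note that $LM(\mathcal{G}_m)$ need not itself be a Gröbner basis of the ideal it generates, but that is irrelevant: for a monomial ideal the quotient always has the obvious monomial $k$-basis, and the surjection $A' \twoheadrightarrow A''$ is all that is needed for the inequality $a_i \ge 0$.
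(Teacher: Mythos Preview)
Your proposal is correct and follows essentially the same approach as the paper: both arguments establish $a_i \ge 0$ from the containment $LM(\mathcal{G}_m) \subseteq LM(\mathcal{G})$ (you phrase it as a surjection $A' \twoheadrightarrow A''$, the paper as an inclusion of normal-word sets), and both prove $a_i = 0$ for $i \le m$ by observing that if $LM(g) \mid u$ with $\deg u \le m$ then necessarily $g \in \mathcal{G}_m$, so $NW(LM(\mathcal{G}_m))_i = NW(\mathcal{G})_i$ in that range. Your remark that $LM(\mathcal{G}_m)$ need not be a Gr\"obner basis of the ideal it generates, and that this does not matter for a monomial ideal, is a useful clarification that the paper leaves implicit.
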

\begin{proof}
For any $i\geq 0$, $LM(\mathcal{G}_m)_i\subset LM(\mathcal{G})_i$ since $LM(\mathcal{G}_m)\subset LM(\mathcal{G})$. Then $NW(LM(\mathcal{G}))_i\subset NW(LM(\mathcal{G}_m))_i$ and $\dim_kA'_i\geq\dim_kA_i$. $H_{A'}(t)-H_A(t)=\sum_{i\geq 0}a_it^i$ with $a_i\geq 0$.

Moreover, for $i\leq m$,
\begin{equation*}
\begin{aligned}
NW(LM(\mathcal{G}_m))_i&=\{u\in X^*\;|\;LM(g)\nmid u\text{ for  any }g \in LM(\mathcal{G}_m) \text{ and }\deg u=i\}\\
&=\{u\in X^*\;|\;LM(g)\nmid u\text{ for  any }g \in \mathcal{G}_m \text{ and }\deg u=i\}\\
&=\{u\in X^*\;|\;LM(g)\nmid u\text{ for  any }g \in \mathcal{G}_i \text{ and }\deg u=i\}\\
&=NW(\mathcal{G})_i.
\end{aligned}
\end{equation*}
Hence $\dim_kA_i=\dim_kA'_i$ if $i\leq m$, which implies $H_{A'}(t)-H_A(t)=\sum_{i>m}a_it^i$ with $a_i\geq0$.
\end{proof}

We choose monomial ordering $\prec_{gr-lex}$ on the free monoid $\{x_1,x_2\}^*$ as follows: For any $u=x_{i_1}x_{i_2}\cdots x_{i_s}, v=x_{j_1}x_{j_2}\cdots x_{j_t}\in X^*$, to say $u\prec_{gr-lex} v$ we mean either
\begin{enumerate}
\item $s<t$, or
\item $s=t$ and there exists $p$ such that $x_{i_l}=x_{j_l}$ for $l<p$ and $i_p>j_p$.
\end{enumerate}

Keep in mind that  Hilbert series of type $(12221)$ is
\begin{equation*}\label{Hilbert series}
H_A(t)=1+2t+4t^2+7t^3+11t^4+16t^5+23t^6+31t^7+\cdots.\tag{HS}
\end{equation*}

The algebra corresponding to (\ref{S1}) is  $U(g,h)=k\langle x_1,x_2\rangle/(f_{11},f_{12})$, where
\begin{equation*}
\begin{aligned}
&f_{11}=x_1x_2^2-2x_2x_1x_2+x_2^2x_1,\\
&f_{12}=x_1x_2x_1x_2-x_2x_1^2x_2-x_2x_1x_2x_1
+x_2^2x_1^2+gx_2^2x_1x_2-gx_2^3x_1+hx_2^4,
\end{aligned}
\end{equation*}
with $g, h\in k$.
\vspace{2mm}

$V(w,l)=k\langle x_1,x_2\rangle/(f_{21},f_{22})$ is the algebra corresponding to (\ref{S5}), where
\begin{equation*}
\begin{aligned}
f_{21}=&x_1x_2^2-(1+j^2)x_2x_1x_2+j^2x_2^2x_1+w(-j^6+j^5)x_2^3,\\
f_{22}=&x_1x_2x_1x_2-x_2x_1^2x_2-j^2x_2x_1x_2x_1+j^2x_2^2x_1^2+l(j^4-j^3+j^2)x_2^2x_1x_2\\
&+l(2j^5+2j^3+j+1)x_2^3x_1+l^2(j^6-2j^5-j^3-j^2-2)x_2^4,
\end{aligned}
\end{equation*}
with $j^6+j^5+j^4+j^3+j^2+j+1=0$ and $w,l\in k$.

\begin{lemma}
$U(g,h),V(w,l)$ are not AS-regular.
\end{lemma}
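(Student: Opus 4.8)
The plan is to show directly that the Hilbert series of $U(g,h)$ and $V(w,l)$ differ from the type-$(12221)$ series \eqref{Hilbert series} in some low degree, which contradicts the requirement that an AS-regular algebra of type $(12221)$ have Hilbert series $H_A(t)=1/((1-t)^3(1-t^2)(1-t^3))$. First I would fix the monomial order $\prec_{gr-lex}$ introduced above and compute, for each of the two algebras, the leading monomials of $f_{i1},f_{i2}$; here $LM(f_{i1})=x_1x_2^2$ and $LM(f_{i2})=x_1x_2x_1x_2$. I then run the Diamond Lemma / Buchberger procedure (as in \cite{B}) far enough to control $\dim_k A_m$ for small $m$. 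In fact, by Lemma \ref{low degree LM and Hilbert} it suffices to compute the reduced Gr\"obner basis only up to some degree $m$: if $A'=k\langle X\rangle/(LM(\mathcal{G}_m))$ then $H_{A'}(t)$ and $H_A(t)$ agree in degrees $\le m$, so a discrepancy between the monomial-algebra count in degree $\le m$ and the coefficient of $t^{\le m}$ in \eqref{Hilbert series} already rules out regularity.

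The key steps, in order, are: (i) determine the overlap ambiguities of $\{x_1x_2^2,\,x_1x_2x_1x_2\}$ and resolve them, producing new Gr\"obner basis elements in degrees $4,5,6,\dots$ until the pattern of leading monomials stabilizes through degree $7$ (this is the degree where \eqref{resolution} predicts the last generator/relation data, so degree $\le 7$ is a safe window); (ii) count the normal words $NW(\mathcal{G})_i$ in each degree $i\le 7$ for the resulting monomial algebra; (iii) compare with the sequence $1,2,4,7,11,16,23,31,\ldots$ from \eqref{Hilbert series}. For $U(g,h)$, the extra relation of degree $4$ has leading term $x_1x_2x_1x_2$, and resolving the overlap of $x_1x_2^2$ with itself and with $x_1x_2x_1x_2$ forces additional leading monomials whose effect is to push $\dim_k A_m$ above the required value in degree $5$ or $6$; a short direct count shows the Hilbert series cannot match \eqref{Hilbert series}. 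The same computation for $V(w,l)$ — whose leading monomials are identical, so the monomial Gr\"obner basis and hence the Hilbert series in low degrees coincide with those of $U$ — gives the same contradiction. (One can also invoke that $U$ and $V$ have the same associated monomial algebra in low degrees, so a single count settles both.)

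The main obstacle is purely computational bookkeeping: one must carry the Buchberger procedure far enough that no new overlap ambiguity in degree $\le 7$ has been missed, since an undetected new leading monomial in degree $\le 7$ would change the count. The safeguard is Lemma \ref{low degree LM and Hilbert} together with the fact that once the set of degree-$\le 7$ leading monomials is closed under the reduction of all degree-$\le 7$ overlaps, the Hilbert series of $A$ is pinned down through degree $7$; since \eqref{Hilbert series} already disagrees by degree $6$ at the latest, we never need to go higher. I expect the discrepancy to appear as $\dim_k A_m \neq$ the $m$-th coefficient of \eqref{Hilbert series} for $m\in\{5,6\}$, and a one-line comparison then finishes the proof that neither $U(g,h)$ nor $V(w,l)$ is AS-regular.
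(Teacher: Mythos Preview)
Your approach is the same as the paper's, but the computation is simpler than you anticipate. In the paper's proof, the Diamond Lemma shows that $\{f_{i1},f_{i2}\}$ is already the \emph{full} reduced Gr\"obner basis for both $U(g,h)$ and $V(w,l)$: all overlap ambiguities resolve to zero, so no new elements appear. Thus $LM(\mathcal{G})=\{x_1x_2^2,\,x_1x_2x_1x_2\}$ exactly, Lemma~\ref{low degree LM and Hilbert} is not needed, and the Hilbert series equals that of the monomial algebra $k\langle x_1,x_2\rangle/(x_1x_2^2,\,x_1x_2x_1x_2)$, namely $1+2t+4t^2+7t^3+11t^4+17t^5+\cdots$. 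The discrepancy with \eqref{Hilbert series} occurs at degree $5$ ($17$ versus $16$).

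One small correction to your reasoning: you write that ``additional leading monomials \dots\ push $\dim_k A_m$ above the required value''. This is backwards---additional leading monomials shrink the set of normal words and hence \emph{lower} $\dim_k A_m$. What actually happens here is the opposite: because there are \emph{no} extra Gr\"obner elements beyond the two defining relations, the normal-word count in degree $5$ is too \emph{large}. Had you carried out the Buchberger procedure as planned you would have discovered this, but it is worth being clear about which direction the inequality goes.
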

\begin{proof}
By Diamond Lemma, we know $\{f_{11},f_{12}\}$ and $\{f_{21},f_{22}\}$ are Gr\"obner bases of $(f_{11},f_{12})$ and $(f_{21},f_{22})$ respectively. Then their leading monomials of Gr\"obner bases are the same, that is, $LM(\mathcal{G})=\{x_1x_2^2,x_1x_2x_1x_2\}$. Denote $MON:=k\langle x_1,x_2\rangle/(LM(\mathcal{G}))$.Hence
$$
H_{U(g, h)}(t)=H_{V(w, l)}(t)=H_{MON}(t)=1+2t+4t^2+7t^3+11t^4+17t^5+\cdots,
$$
which is different from (\ref{Hilbert series}).
\end{proof}

The algebras corresponding to solutions (\ref{S2})---(\ref{S4}) are listed below.
\begin{description}
\item[(S2)]
$O(w)=k\langle x_1,x_2\rangle/(f_{31},f_{32})$, where
\begin{equation*}
\begin{aligned}
f_{31}=&x_1x_2^2-x_2^2x_1+wx_2^3,\\
f_{32}=&x_1x_2x_1x_2+x_2x_1^2x_2+x_2x_1x_2x_1-3x_2^2x_1^2+(1-\frac{w}{2})x_2^2x_1x_2\\
&+(\frac{7w}{2}-1)x_2^3x_1+(-\frac{3w^2}{2}+\frac{w}{2})x_2^4,
\end{aligned}
\end{equation*}
with $w\in k$.
\vspace{2mm}
\item[(S3)]
$P(a)=k\langle x_1,x_2\rangle/(f_{41},f_{42})$, where
\begin{equation*}
\begin{aligned}
f_{41}=&x_1x_2^2-x_2^2x_1+\frac{2}{7}x_2^3,\\
f_{42}=&x_1x_2x_1x_2+x_2x_1^2x_2+x_2x_1x_2x_1-3x_2^2x_1^2+\frac{6}{7}x_2^2x_1x_2+ax_2^4,
\end{aligned}
\end{equation*}
with $a\in k$.
\vspace{2mm}
\item[(S4)]
$Q(d)=k\langle x_1,x_2\rangle/(f_{51},f_{52})$, where
\begin{equation*}
\begin{aligned}
f_{51}=&x_1x_2^2-(1-j^3)x_2x_1x_2+j^3x_2^2x_1+dx_2^3,\\
f_{52}=&x_1x_2x_1x_2+jx_2x_1^2x_2-(j ^6+j ^2+2j +2)x_2x_1x_2x_1+(j ^6+j ^2+j +1)x_2^2x_1^2\\
&+\big(\frac{j ^6+1}{2}-d(\frac{j^4}{2}+2j ^3+3j ^2+2+\frac{7j}{2})\big)x_2^2x_1x_2\\
&+\big(d(j ^5+\frac{3j^4}{2}+2j ^3+3j ^2+\frac{7j}{2}+3)-\frac{j ^6+1}{2}\big)x_2^3x_1\\
&+\frac12\Big({d^2(-4j ^5+10j ^3+14j ^2+13j +6)-d(j ^3+2j ^2+2j +1)}\Big)x_2^4,
\end{aligned}
\end{equation*}
with $j^6+j^5+j^4+j^3+j^2+j+1=0$ and $d\in k$.
\end{description}

\begin{lemma}
$O(w),P(a),Q(d)$ are not AS-regular.
\end{lemma}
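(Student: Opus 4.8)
The plan is to argue as for $U(g,h)$ and $V(w,l)$ in the preceding lemma: for each of the three families reduce the question to a Hilbert series computation, and show that the Hilbert series cannot equal $(\ref{Hilbert series})$, which every connected graded AS-regular algebra of global dimension $4$ with two degree-one generators must have (being forced to be of resolution type $(12221)$, cf.\ \cite{LPWZ2}). The one genuinely new feature, compared with $U$ and $V$, is that here the two defining relations do \emph{not} form a Gr\"obner basis, so the computation requires running the Diamond Lemma \cite{B}.

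Concretely, keeping the order $\prec_{gr-lex}$ fixed above, one first records that for each of $O(w)$, $P(a)$, $Q(d)$ the leading monomials of the cubic and the quartic relation are $x_1x_2^2$ and $x_1x_2x_1x_2$, exactly as for $U$ and $V$. The smallest overlap ambiguities are $x_1x_2x_1x_2x_2$ in degree $5$ (the length-$2$ suffix of $x_1x_2x_1x_2$ against the length-$2$ prefix of $x_1x_2^2$) and the self-overlap $x_1x_2x_1x_2x_1x_2$ in degree $6$; for $O$, $P$, $Q$ these, and the further ambiguities they spawn, do not all reduce to zero, so new Gr\"obner relations appear. The next step is to compute, with Maple and with $w$, $a$, $d$ (and, for $Q(d)$, the primitive $7$th root of unity $j$) kept symbolic, the reduced Gr\"obner basis $\mathcal{G}$ up to a degree $m$ large enough to detect the defect, and then to invoke Lemma \ref{low degree LM and Hilbert}: this identifies $\dim_k A_i$ for all $i\le m$ with the number of normal words of degree $i$ modulo $LM(\mathcal{G}_m)$, a finite combinatorial count. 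One then pinpoints a degree $i_0\le m$ at which this count disagrees with the coefficient of $t^{i_0}$ in $(\ref{Hilbert series})$ --- I expect the extra relations to pull the Hilbert series strictly below $(\ref{Hilbert series})$, in the worst case all the way down to a polynomial in $t$, i.e.\ to a finite-dimensional algebra. Since an AS-regular algebra of type $(12221)$ has Hilbert series $(\ref{Hilbert series})$, this rules out AS-regularity of $O(w)$, $P(a)$, $Q(d)$.

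The chief obstacle is the Gr\"obner basis computation itself: these algebras may have \emph{infinite} reduced Gr\"obner bases, so one must be sure that the truncation $\mathcal{G}_m$ is computed correctly and that the value of $m$ reached already exposes a discrepancy with $(\ref{Hilbert series})$ --- for $U$ and $V$ it was visible already in degree $5$, and here it may lie a little deeper. A secondary difficulty is uniformity in the parameters: the assertion is for \emph{all} values of $w$, $a$, $d$, so the $S$-polynomial reductions have to be performed symbolically, and the finitely many special values at which some leading coefficient degenerates (changing the leading monomial of a new relation, or making it vanish) must be examined separately to confirm that the Hilbert series still fails to match $(\ref{Hilbert series})$.
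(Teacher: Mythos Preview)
Your plan is the paper's argument: truncate the reduced Gr\"obner basis at some degree $m$, invoke Lemma~\ref{low degree LM and Hilbert} to identify $\dim_k A_i$ for $i\le m$ with a count of normal words, and exhibit a degree at which this fails to match $(\ref{Hilbert series})$. The paper carries this out with $m=7$: for all three families the leading monomials of $\mathcal{G}_7$ turn out to be the \emph{same} set
\[
\{x_1x_2^2,\; x_1x_2x_1x_2,\; x_2^2x_1^2x_2,\; x_2^2x_1^3x_2,\; x_2^2x_1x_2x_1^2x_2,\; x_2^2x_1^4x_2\},
\]
independent of the parameters $w,a,d$ (so your worry about coefficient degeneration never materialises). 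The associated monomial algebra has Hilbert series $1+2t+4t^2+7t^3+11t^4+16t^5+23t^6+32t^7+\cdots$, whence $\dim_k A_7=32$, while $(\ref{Hilbert series})$ demands $31$.

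One point to correct: your expectation that the extra Gr\"obner relations will pull $H_A$ \emph{below} $(\ref{Hilbert series})$, possibly down to a polynomial, is mistaken. The new elements of $\mathcal{G}$ are consequences of $f_1,f_2$, not additional constraints on $A$; they merely refine the computation of the fixed series $H_A$. In the event $H_A$ \emph{exceeds} $(\ref{Hilbert series})$ at degree $7$, exactly as $H_U$ and $H_V$ did at degree $5$. (None of these algebras is finite-dimensional: $x_1^n$ is a normal word for every $n$.) If you searched only for a downward defect you would never find the obstruction.
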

\begin{proof}
 Only consider $\mathcal{G}_7$. Then we obtain that the leading monomials in $\mathcal{G}_7$ of them are the same, that is,
$$
LM(\mathcal{G}_7)=\{x_1x_2^2,x_1x_2x_1x_2,x_2^2x_1^2x_2,x_2^2x_1^3x_2,x_2^2x_1x_2x_1^2x_2,x_2^2x_1^4x_2\}.
$$
Let $MON_7:=k\langle x_1,x_2\rangle/(\mathcal{G}_7)$.

Suppose they are AS-regular, then they have the same Hilbert series (\ref{Hilbert series}) denoted $H(t)$. Then $H_{MON_7}(t)-H(t)=\sum_{i\geq 8}a_it^i$ with $a_i\geq 0$ by Lemma \ref{low degree LM and Hilbert}.

However, $H_{MON_7}(t)=1+2t+4t^2+7t^3+11t^4+16t^5+23t^6+32t^7+\cdots$
and
$$H_{MON_7}(t)-H(t)=t^7+\Sigma_{i\geq 8}a_it^i.$$
It is a contradiction.
\end{proof}
\vspace{2mm}

\subsection{Case 2: AS-regular}~

Now we turn to Case 2. From (\ref{SI(5a)}), we find that $v_{1112}=-\frac{M}{2}$. Assume
$$
v_{1112}=1.
$$

The same method is used as in Case 1. Using (\ref{SI(5a)1})--(\ref{SI(5a)16}) again, represent $v_{ijkh}$ by $\{w,c_{11},c_{21},$ $c_{31},c_{12},c_{22},c_{32},v_{2221},v_{2222}\}$. Find expressions for $u_{isjkh}$ from \ref{SI(5a)}. We also omit those explicit formulas. Then input the formulas into \ref{SI(6a)} which produces $2^7 $ equations involving the variables $\{w,c_{11},c_{21},c_{31},c_{12},c_{22},c_{32},v_{2221},v_{2222}\}$ and solve them. All those steps are computed by Maple. There exits only one solution, and take it back to $v_{ijkh}$:
\begin{description}
\item[Solution 6]
\begin{equation*}\label{S6}
\begin{aligned}
&\quad\quad~~~ g=1,\quad\quad p=1,\quad\quad w=0,\\
&v_{1112}=1,~~v_{1121}=-3,~~v_{1211}=3,\\
&v_{1212}=-c_{21}+1,~~ v_{2111}=-1,\quad v_{2112}=c_{21},\\
&v_{2121}=c_{21}-3,~v_{2211} = -c_{21}+2, ~v_{2212}=-v_{2221},\\
&v_{2221}= v_{2221}, ~~~v_{2222}= v_{2222}.
\end{aligned}\tag{S6}
\end{equation*}
\end{description}

The corresponding algebra is
$\mathcal{J}=k\langle x_1,x_2\rangle/(f_1,f_2)$ where
\begin{equation*}
\begin{aligned}
f_1=&x_1x_2^2-2x_2x_1x_2+x_2^2x_1,\\
f_2=&x_1^3x_2-3x_1^2x_2x_1+3x_1x_2x_1^2-x_2x_1^3+(1-u)x_1x_2x_1x_2+ux_2x_1^2x_2\\
&+(u-3)x_2x_1x_2x_1+(2-u)x_2^2x_1^2-vx_2^2x_1x_2+vx_2^3x_1+wx_2^4,
\end{aligned}
\end{equation*}
and $u, v, w\in k$.

\vspace{2mm}
Then we define $\mathbb{Z}^2$-grading on $k\langle x_1, x_2\rangle$ with $\deg^2 x_1=(1,0),\; \deg^2 x_2=(0,1)$. We choose $\prec_{gr-lex}$ on $\{x_1,x_2\}^*$ as the monomial ordering defined in Section 4.1. The admissible ordering $\prec_{\mathbb{Z}^2}$ is defined as in Section \ref{Z^n}. Let $I=(f_1, f_2)$ and $\mathcal{G}$ be the Gr\"obner basis of $I$ respect to $\prec_{\mathbb{Z}^2}$. Then
$$
G^2(\mathcal{J})\cong k\langle x_1,x_2\rangle/(LH(I))\cong k\langle x_1,x_2\rangle/(LH(\mathcal{G})).
$$
Applying the Diamond Lemma, the Gr\"obner basis $\mathcal{G}$ is $\{f_1, f_2, f_3\}$ where
\begin{equation*}
\begin{aligned}
f_3=&x_1^2x_2x_1x_2-3x_1x_2x_1^2x_2+2x_1x_2x_1x_2x_1+3x_2x_1^2x_2x_1-5x_2x_1x_2x_1^2\\
&+(2u-2)x_2x_1x_2x_1x_2+2x_2^2x_1^3-2ux_2^2x_1^2x_2+(6-2u)x_2^2x_1x_2x_1\\
&+(2u-4)x_2^3x_1^2+2vx_2^3x_1x_2-2vx_2^4x_1-2wx_2^5.
\end{aligned}
\end{equation*}
So, $LH(\mathcal{G})=\{LH(f_1), LH(f_2), LH(f_3)\}$,
where
\begin{equation*}
\begin{aligned}
&LH(f_1)=x_1x_2^2-2x_2x_1x_2+x_2^2x_1,\\
&LH(f_2)=x_1^3x_2-3x_1^2x_2x_1+3x_1x_2x_1^2-x_2x_1^3,\\
&LH(f_3)=x_1^2x_2x_1x_2-3x_1x_2x_1^2x_2+2x_1x_2x_1x_2x_1+3x_2x_1^2x_2x_1-5x_2x_1x_2x_1^2+2x_2^2x_1^3.
\end{aligned}
\end{equation*}
However, $LH(f_3)=LH(f_2)x_2-x_2LH(f_2)+LH(f_1)x_1^2-x_1^2LH(f_1)-x_1LH(f_1)x_1$. Therefore,
$$
G^2(\mathcal{J})=k\langle x_1,x_2\rangle/(x_1x_2^2-2x_2x_1x_2+x_2^2x_1, x_1^3x_2-3x_1^2x_2x_1+3x_1x_2x_1^2-x_2x_1^3).
$$
This is just $D(-2,-1)$ in \cite{LPWZ2} which is AS-regular. By Theorem \ref{first theo}, we have

\begin{proposition}
$\mathcal{J}$ is an AS-regular algebra of global dimension $4$.
\end{proposition}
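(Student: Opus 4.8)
The plan is to invoke Theorem \ref{first theo} with $r=2$, the $\mathbb{Z}^2$-grading $\deg^2 x_1=(1,0)$, $\deg^2 x_2=(0,1)$ on $k\langle x_1,x_2\rangle$, and the admissible ordering $\prec_{\mathbb{Z}^2}$ built from $\prec_{gr-lex}$ as in Section \ref{Z^n}. Writing $I=(f_1,f_2)$, it then suffices to produce the reduced Gr\"obner basis $\mathcal{G}$ of $I$ with respect to $\prec_{\mathbb{Z}^2}$ and to check that the $\mathbb{Z}^2$-graded algebra $k\langle x_1,x_2\rangle/(LH(\mathcal{G}))$ is AS-regular.

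First I would compute $\mathcal{G}$ by the Diamond Lemma \cite{B}: starting from $\{f_1,f_2\}$, resolve the overlap ambiguities of the leading monomials $x_1x_2^2$ and $x_1^3x_2$; this yields one new relation $f_3$ in degree $7$, and all remaining ambiguities among $f_1,f_2,f_3$ reduce to zero, so $\mathcal{G}=\{f_1,f_2,f_3\}$. Taking leading $\mathbb{Z}^2$-homogeneous parts gives $LH(\mathcal{G})=\{LH(f_1),LH(f_2),LH(f_3)\}$, and the crucial point is the identity
\[
LH(f_3)=LH(f_2)x_2-x_2LH(f_2)+LH(f_1)x_1^2-x_1^2LH(f_1)-x_1LH(f_1)x_1,
\]
which shows that $LH(f_3)$ already lies in the two-sided ideal $(LH(f_1),LH(f_2))$. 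Hence
\[
G^2(\mathcal{J})\cong k\langle x_1,x_2\rangle/(LH(\mathcal{G}))=k\langle x_1,x_2\rangle/\bigl(x_1x_2^2-2x_2x_1x_2+x_2^2x_1,\ x_1^3x_2-3x_1^2x_2x_1+3x_1x_2x_1^2-x_2x_1^3\bigr).
\]
This last algebra is $D(-2,-1)$ of \cite{LPWZ2}, which is AS-regular of type $(12221)$, hence of global dimension $4$. Theorem \ref{first theo} then gives at once that $\mathcal{J}$ is AS-regular, and the equality $gl\dim\mathcal{J}=gl\dim G^2(\mathcal{J})$ recorded at the end of the proof of Theorem \ref{key theo 1} fixes the global dimension at $4$.

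The main obstacle is the Gr\"obner basis step: one must verify that $\{f_1,f_2,f_3\}$ is complete (all overlaps resolve) and that the stated reduction of $LH(f_3)$ holds. This is a finite but somewhat delicate computation, best carried out — as the authors do — with the aid of a computer algebra system. Once it is in place, the conclusion is a direct application of Theorem \ref{first theo} together with the known status of $D(-2,-1)$.
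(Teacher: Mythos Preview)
Your proposal is essentially identical to the paper's own argument: the authors set up the same $\mathbb{Z}^2$-grading, compute the reduced Gr\"obner basis $\mathcal{G}=\{f_1,f_2,f_3\}$ via the Diamond Lemma, exhibit exactly the identity you wrote for $LH(f_3)$ to conclude $G^2(\mathcal{J})\cong D(-2,-1)$, and then invoke Theorem~\ref{first theo}. The only slip is that $f_3$ has degree $5$ (its leading monomial is $x_1^2x_2x_1x_2$, arising from the overlap $x_1^3x_2^2$ of $LM(f_2)=x_1^3x_2$ and $LM(f_1)=x_1x_2^2$), not degree $7$.
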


\section{Properties of the algebras}
In this section, we show some properties of $\mathcal{J}$ about ring-theoretic, homology and geometry.

$D(-2,-1)$ has been proved to be noetherian, strongly noetherian and Auslander regular in \cite{LPWZ2}.  By Corollary \ref{nsnac1}, we obtain immediately
\begin{theorem}
The algebra $\mathcal{J}$ is strongly noetherian and Auslander regular.
\end{theorem}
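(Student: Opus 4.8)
The plan is to derive the statement directly from Corollary~\ref{nsnac1}, using the $\mathbb{Z}^2$-grading and Gr\"obner basis already set up for $\mathcal{J}$ in Section~4. Recall that we placed the $\mathbb{Z}^2$-grading $\deg^2 x_1=(1,0)$, $\deg^2 x_2=(0,1)$ on $k\langle x_1,x_2\rangle$, fixed the monomial order $\prec_{gr-lex}$, and formed the associated admissible ordering $\prec_{\mathbb{Z}^2}$ as in Section~\ref{Z^n}. The Diamond Lemma computation there shows that the reduced Gr\"obner basis of $I=(f_1,f_2)$ with respect to $\prec_{\mathbb{Z}^2}$ is $\mathcal{G}=\{f_1,f_2,f_3\}$, and that
$$
k\langle x_1,x_2\rangle/(LH(\mathcal{G}))\;\cong\;G^2(\mathcal{J})\;=\;D(-2,-1),
$$
the latter being the algebra of that name in \cite{LPWZ2}.

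First I would record, citing \cite{LPWZ2}, that $D(-2,-1)$ is noetherian, strongly noetherian, and Auslander regular; this is precisely the input from the literature that the argument needs. The defining relations of $D(-2,-1)$ are $\mathbb{Z}^2$-homogeneous, so viewing it as the $\mathbb{Z}^2$-graded algebra $k\langle x_1,x_2\rangle/(LH(\mathcal{G}))$ causes no conflict with the way it is treated there.

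Then the conclusion is immediate: Corollary~\ref{nsnac1} asserts that if the $\mathbb{Z}^2$-graded algebra $k\langle x_1,x_2\rangle/(LH(\mathcal{G}))$ is strongly noetherian and Auslander regular, then so is $A=k\langle x_1,x_2\rangle/I=\mathcal{J}$. Since the hypothesis holds by the previous paragraph, $\mathcal{J}$ is strongly noetherian and Auslander regular. (Note that $\mathcal{J}$ being AS-regular, hence of finite global dimension, is already known from the preceding Proposition, so the genuinely new content here is the strong noetherianity and the Auslander condition, both of which Corollary~\ref{nsnac1} transfers.)

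There is essentially no obstacle remaining: the Gr\"obner-basis verification --- the one genuinely technical point --- was already carried out via the Diamond Lemma in Section~4, and admissibility of $\prec_{\mathbb{Z}^2}$ is guaranteed by its construction in Section~\ref{Z^n}. The only thing worth double-checking is that the ordering invoked in Corollary~\ref{nsnac1} is the same one used to produce $\mathcal{G}$, i.e.\ that $\prec_{\mathbb{Z}^2}$ takes the $\mathbb{Z}^2$-grading as its top priority; this is exactly how $\prec_{\mathbb{Z}^r}$ is defined, so the hypotheses of the corollary are met verbatim and the proof is a one-line application.
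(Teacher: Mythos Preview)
Your proposal is correct and follows essentially the same approach as the paper: cite from \cite{LPWZ2} that $D(-2,-1)$ is strongly noetherian and Auslander regular, then apply Corollary~\ref{nsnac1} to transfer these properties to $\mathcal{J}$. The paper's own proof is in fact just this one line, with the Gr\"obner-basis identification $G^2(\mathcal{J})\cong D(-2,-1)$ already established in Section~4.
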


Besides, we still want to know whether $\mathcal{J}$ is Cohen-Macaulay. An Ore extension is constructed below.
\begin{theorem}
The algebra $\mathcal{J}$ is Cohen-Macaulay.
\end{theorem}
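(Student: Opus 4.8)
The plan is to realize $\mathcal{J}$ as a graded Ore extension of a three-dimensional Artin-Schelter regular algebra, and then to apply two standard transfer facts. The general criterion of Section~\ref{Z^n} gives only $j_{\mathcal{J}}(M)\ge j_{G^2(\mathcal{J})}(G^2(M))$ (Lemma~\ref{jnumber}), which is not enough to transport the Cohen-Macaulay property from $G^2(\mathcal{J})\cong D(-2,-1)$; so instead I would change generators. Write $\mathcal{J}_1=kx_1\oplus kx_2$, and look for a new basis $z,b_1$ of $\mathcal{J}_1$ together with homogeneous elements $b_2\in\mathcal{J}_2$ and $b_3\in\mathcal{J}_3$ --- built out of iterated commutators of $x_1$ and $x_2$ --- such that the subalgebra $\mathcal{B}:=k\langle b_1,b_2,b_3\rangle\subseteq\mathcal{J}$ is closed under all defining relations of $\mathcal{J}$ that do not involve $z$, such that $\mathcal{J}$ is free as a left $\mathcal{B}$-module on $1,z,z^2,\dots$, and such that $zb=\sigma(b)z+\delta(b)$ for a degree-preserving algebra automorphism $\sigma$ of $\mathcal{B}$ and a $\sigma$-derivation $\delta$ of degree $1$. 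In other words, $\mathcal{J}\cong\mathcal{B}[z;\sigma,\delta]$. The expected shape is $\mathcal{B}=k\langle b_1,b_2,b_3\rangle/(g_3,g_4,g_5)$ with $\deg b_i=i$ and relations $g_j$ of degree $j=3,4,5$, a (possibly deformed, weighted) three-dimensional Artin-Schelter regular algebra with $\mathrm{GKdim}\,\mathcal{B}=3$; the identity $H_{\mathcal{J}}(t)=(1-t)^{-1}H_{\mathcal{B}}(t)$ is then a consistency check on $z$ having degree $1$.

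Granting such a presentation, the theorem follows at once. First, every connected graded noetherian Artin-Schelter regular algebra of global dimension $\le 3$ is Cohen-Macaulay (this goes back to Artin, Tate and Van den Bergh; see \cite{ATV2} and the ensuing structure theory of three-dimensional regular algebras), so $\mathcal{B}$ is Cohen-Macaulay. Second, a graded Ore extension $\mathcal{B}[z;\sigma,\delta]$ of an Auslander-Gorenstein Cohen-Macaulay algebra is again Auslander-Gorenstein and Cohen-Macaulay: when $\delta=0$ this is immediate, for then $z$ is a homogeneous regular normal element with $\mathcal{B}[z;\sigma]/(z)\cong\mathcal{B}$, so that both $\operatorname{depth}$ and $\mathrm{GKdim}$ increase by exactly $1$ on passing from $\mathcal{B}$ to $\mathcal{B}[z;\sigma]$; and the case $\delta\ne0$ reduces to this one via the Zariskian filtration of $\mathcal{B}[z;\sigma,\delta]$ by $z$-degree, whose associated graded ring is $\mathcal{B}[z;\sigma]$. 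Hence $\mathcal{J}\cong\mathcal{B}[z;\sigma,\delta]$ is Cohen-Macaulay.

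The main obstacle is producing the Ore presentation in the first step: choosing the generators $z,b_1,b_2,b_3$ correctly and verifying, uniformly in the parameters $u,v,w$, that $\mathcal{B}$ is generated by $b_1,b_2,b_3$ with exactly the expected relations in degrees $3,4,5$ and that $\{z^i\}_{i\ge 0}$ is a left $\mathcal{B}$-basis of $\mathcal{J}$. This is a finite but delicate computation with the defining relations $f_1,f_2$ of $\mathcal{J}$ and the reduced Gr\"obner basis $\{f_1,f_2,f_3\}$ obtained in Section~4: the Diamond Lemma together with Lemma~\ref{low degree LM and Hilbert} pins down $H_{\mathcal{B}}(t)$ in low degrees and certifies freeness over $\mathcal{B}$, after which recognizing $\mathcal{B}$ as a three-dimensional regular algebra is routine. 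Everything downstream of the Ore decomposition is formal.
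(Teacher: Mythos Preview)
Your plan matches the paper's proof: there one takes $z=x_1$, $b_1=x_2$, $b_2=z_1:=[x_1,x_2]$, $b_3=z_2:=[x_1,z_1]$, and the subalgebra $\mathcal{B}$ turns out to be simply the \emph{commutative} polynomial ring $k[x_2,z_1,z_2]$ with $\sigma=\mathrm{id}$ and an explicit $\delta$, so your anticipated ``main obstacle'' dissolves and no deformation appears. The Cohen--Macaulay transfer along the Ore extension is handled by a direct citation to \cite[Lemma~1.3]{RZ} rather than the Zariskian filtration argument you sketch, and the identification $\mathcal{J}\cong\mathcal{B}[x_1;\mathrm{id},\delta]$ is checked by rewriting the relations directly rather than via the Gr\"obner machinery.
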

\begin{proof}
We claim $\mathcal{J}$ is an Ore extension of an algebra which is Cohen-Macaulay. Hence $\mathcal{J}$ is Cohen-Macaulay by \cite[Lemma 1.3]{RZ}.

Take a graded polynomial algebra $B=k[x_2, z_1, z_2]$ with $\deg x_2=1,\deg z_1=2,$ and $\deg z_2=3$. This is Cohen-Macaulay since it is an iterated Ore extension.

Let $x_1$ be a new variable with degree 1 and $C=B[x_1;\sigma,\delta]$ where $\sigma$ is identity and
\begin{equation*}
\delta(x_2)=z_1,\quad\delta(z_1)=z_2,\quad\delta(z_2)=(u-1)z_1^2-x_2z_2
+vx_2^2z_1-wx_2^4,\quad u, v, w\in k.
\end{equation*}

We rewrite the relations between $x_1$ and $x_2,z_1$ as
\begin{equation*}
x_1x_2=x_2x_1+z_1,\quad x_1z_1=z_1x_1+z_2.
\end{equation*}
Then $z_1,z_2$ can be generated by $x_1,x_2$ as
\begin{equation*}
z_1=x_1x_2-x_2x_1,\quad z_2=x_1z_1-z_1x_1.
\end{equation*}
Hence $C$ is generated by $x_1,x_2$. The other four relations of $C$ are listed below
\begin{eqnarray*}
&&x_2z_1-z_1x_2,\\
&&x_1z_2-z_2x_1+(1-u)z_1^2+x_2z_2-vx_2^2z_1+wx_2^4,\\
&&x_2z_2-z_2x_2,\\
&&z_1z_2-z_2z_1.
\end{eqnarray*}
Replacing $z_1,z_2$, the first relation is equivalent to the relation $f_1$ of $\mathcal{J}$. After being reduced by $f_1$ with respect to $\prec_{\mathbb{Z}^2}$, the second is equivalent to $f_2$ of $\mathcal{J}$. And the last two relations can be derived from $f_1,f_2,f_3$. Hence $\mathcal{J}\cong C$.
\end{proof}

\begin{remark}
The proof also shows $\mathcal{J}$ is AS-regular of dimension $4$, strongly noetherian and Auslander regular. However, finding an Ore extension is a tedious task, the method used in last section is more effective.
\end{remark}

\begin{theorem}
The automorphism group of $\mathcal{J}$ is isomorphic to the group $G$,
where
$$G=\left\{
\left(
\begin{array}{cc}
 a&b\\
0&a
\end{array}
\right)
\;\Big|\;\;a\in k\backslash\{0\},b\in k
\right\}.$$
\end{theorem}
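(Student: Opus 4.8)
The plan is to determine $\operatorname{Aut}(\mathcal{J})$ by exploiting the fact that $\mathcal{J}$ is generated in degree $1$, so every graded algebra automorphism is determined by an invertible linear map on $\mathcal{J}_1 = kx_1 \oplus kx_2$; moreover, since $\mathcal{J}$ is connected graded and generated in degree one, \emph{every} algebra automorphism is automatically graded (it must preserve the augmentation ideal and its powers, hence the degree-one piece). So I would start by writing a general candidate $\sigma \in GL_2(k)$, say $\sigma(x_1) = a x_1 + c x_2$, $\sigma(x_2) = b x_1 + d x_2$, and then impose the condition that $\sigma$ extends to an algebra endomorphism, i.e.\ that $\sigma(f_1)$ and $\sigma(f_2)$ lie in the ideal $(f_1, f_2)$.

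The key structural input is the relation $f_1 = x_1 x_2^2 - 2 x_2 x_1 x_2 + x_2^2 x_1$, which one recognizes as (proportional to) the ``iterated commutator'' $[[x_1,x_2],x_2]$ written additively, or more precisely it is symmetric enough that $f_1$ depends on the pair $(x_1,x_2)$ in a way that singles out $x_2$. Concretely, $f_1$ is the unique (up to scalar) degree-$3$ relation, and it is homogeneous of $\mathbb{Z}^2$-degree $(1,2)$ under $\deg^2 x_1 = (1,0)$, $\deg^2 x_2 = (0,1)$; the other minimal relation $f_2$ together with $f_3$ exhausts the relation space. The first main step is to substitute the general $\sigma$ into $f_1$, expand, and compare with the (low-dimensional) space of degree-$3$ elements of $(f_1)$, namely $kx_1 f_1 + k f_1 x_1 + \dots$ — actually the degree-$3$ part of the ideal is just $k f_1$ since $f_1$ is the only relation in degree $\leq 3$. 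This forces $\sigma(f_1) \in k f_1$, which is a strong constraint: it should already force $c = 0$ (so that $\sigma$ is upper triangular) and $a = d$ up to the scalar by which $f_1$ is multiplied. I expect this single computation — a polynomial identity in the free algebra on two generators, degree $3$, so at most $8$ monomials — to pin down the ``shape'' $\left(\begin{smallmatrix} a & b \\ 0 & a\end{smallmatrix}\right)$, possibly after also using that $b$ is unconstrained by $f_1$ because shifting $x_1 \mapsto x_1 + (b/a) x_2$ sends $f_1$ to a scalar multiple of itself (one can check $f_1(x_1 + t x_2, x_2) = f_1(x_1,x_2)$ exactly, since every correction term is a multiple of $x_1 x_2^2 - 2x_2x_1x_2 + x_2^2 x_1$ evaluated with $x_1$ replaced by $x_2$, which vanishes).

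The second step is to verify the converse: that every matrix of the asserted form actually \emph{does} give an automorphism. Here I would check directly that the substitution $x_1 \mapsto a x_1 + b x_2$, $x_2 \mapsto a x_2$ carries $f_1$ to $a^3 f_1$ and $f_2$ to an element of $(f_1, f_2)$ — the cleanest route is to use the Ore-extension description $\mathcal{J} \cong C = k[x_2, z_1, z_2][x_1; \delta]$ from the Cohen--Macaulay proof, noting $z_1 = x_1 x_2 - x_2 x_1$ and $z_2 = x_1 z_1 - z_1 x_1$; under $x_1 \mapsto a x_1 + b x_2$, $x_2 \mapsto a x_2$ one gets $z_1 \mapsto a^2 z_1$ and $z_2 \mapsto a^3 z_2$, and then the defining relation $\delta(z_2) = (u-1)z_1^2 - x_2 z_2 + v x_2^2 z_1 - w x_2^4$ is seen to be preserved because each term scales by $a^4$ (here one uses that $u,v,w$ are fixed, so this is a self-consistency check on the scalars — and indeed the degrees $\deg z_1^2 = 4$, $\deg x_2 z_2 = 4$, $\deg x_2^2 z_1 = 4$, $\deg x_2^4 = 4$ all match). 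This makes the group-of-matrices claim transparent and also identifies the group: the assignment $\left(\begin{smallmatrix} a & b \\ 0 & a\end{smallmatrix}\right) \mapsto \sigma$ is clearly a group homomorphism $G \to \operatorname{Aut}(\mathcal{J})$, injective because $\sigma$ is determined by its action on $x_1, x_2$, and surjective by Step~1.

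The main obstacle I anticipate is not conceptual but organizational: showing that $\sigma(f_2) \in (f_1, f_2)$ for general $\sigma$ in the candidate form, because $f_2$ has degree $4$ and many terms, and the degree-$4$ part of the ideal $(f_1, f_2)$ is larger (spanned by $x_i f_1 x_j$-type elements together with $f_2$), so one must actually produce the coefficients expressing $\sigma(f_2) - a^4 f_2$ as a combination $\lambda_1 x_1 f_1 + \lambda_2 x_2 f_1 + \mu_1 f_1 x_1 + \mu_2 f_1 x_2$. The Ore-extension viewpoint circumvents most of this, which is why I would lean on it; the only residual check is that the \emph{second} Ore relation $x_1 z_2 - z_2 x_1 + (1-u)z_1^2 + x_2 z_2 - v x_2^2 z_1 + w x_2^4$ — equivalently $f_2$ after reduction — transforms correctly, and this follows because, as noted, $x_1 \mapsto a x_1 + b x_2$ acts on the subalgebra $k[x_2,z_1,z_2]$ trivially up to scaling (the $b x_2$ part commutes with $x_2$ and shifts the $\delta$-action by an inner-like term that is absorbed), so the whole relation scales homogeneously. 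I would present Step~1 (necessity of the triangular shape, via $f_1$ in degree $3$) in full and Step~2–3 (sufficiency and the group identification, via the Ore extension) more briefly.
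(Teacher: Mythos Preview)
Your Step~1 contains a real gap. The degree-$3$ condition $\sigma(f_1)\in kf_1$ does \emph{not} force the two diagonal entries of the matrix to coincide. Writing $\sigma(x_1)=a_1x_1+a_2x_2$, $\sigma(x_2)=b_1x_1+b_2x_2$ (the paper's notation), one finds
\[
\sigma(f_1)=(a_1b_2-a_2b_1)\bigl(-b_1(x_1^2x_2-2x_1x_2x_1+x_2x_1^2)+b_2f_1\bigr),
\]
because $f_1(X,Y)=[[X,Y],Y]$ is linear in $X$ and quadratic in $Y$. Since the first bracket term is not a scalar multiple of $f_1$ in the free algebra, $\sigma(f_1)\in kf_1$ forces $b_1=0$ (not your ``$c=0$'', incidentally --- you have the off-diagonal entries swapped), and then $\sigma(f_1)=a_1b_2^{\,2}f_1$ with \emph{no} relation between $a_1$ and $b_2$. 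The equality $a_1=b_2$ comes only from the second relation: one must compute $\sigma(f_2)$ with $b_1=0$, reduce modulo $f_1$ and $f_2$, and read off from the surviving normal-form monomials (e.g.\ the $(1-u)a_1^2b_2(b_2-a_1)$ coefficient of $x_1x_2x_1x_2$) that $b_2=a_1$. This is precisely what the paper does, and it is not avoidable: the upper-triangular matrices with distinct diagonal entries all preserve $f_1$, so $f_2$ is genuinely needed to cut them down.

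Your Step~2 (the converse via the Ore extension $\mathcal{J}\cong k[x_2,z_1,z_2][x_1;\delta]$) is correct and is a cleaner way to see that every $\begin{pmatrix}a&b\\0&a\end{pmatrix}$ does define an automorphism than a direct monomial check; the paper leaves this direction implicit. But your proof of necessity must be repaired to use $f_2$ as above.
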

\begin{proof}
Let $\sigma$ is an arbitrary automorphism of $\mathcal{J}$. Suppose that
$$
\sigma(x_1)=a_1x_1+a_2x_2,\quad \sigma(x_2)=b_1x_1+b_2x_2,
$$
and the matrix
$\left(
\begin{array}{cc}
a_1&a_2\\
b_1&b_2
\end{array}
\right)$
is nonsingular, that is, $a_1b_2-a_2b_1\neq 0$. Then
\begin{equation*}
\sigma(f_1)=b_1(a_2b_1-a_1b_2)(x_1^2x_2-2x_1x_2x_1+x_2x_1^2)+b_2(a_1b_2-a_2b_1)(x_1x_2^2-2x_2x_1x_2+x_2^2x_1).
\end{equation*}

It is zero in $\mathcal{J}$, so it must be a scalar multiple of $f_1$. Hence
$$
b_1=0.
$$

To see the other relation $f_2$,
\begin{equation*}
\begin{aligned}
\sigma(f_2)&=a_1^3b_2(x_1^3x_2-3x_1^2x_2x_1+3x_1x_2x_1^2-x_2x_1^3)+a_1^2b_2(4a_2+(1-u)b_2)x_1x_2x_1x_2\\
           &\quad+a_1^2b_2((u-3)b_2-4a_2)x_2x_1x_2x_1+a_1^2b_2(2a_2+(2-u)b_2)x_2^2x_1^2-2a_1^2a_2b_2x_1^2x_2^2\\
           &\quad+ua_1^2b_2^2x_2x_1^2x_2+a_1a_2b_2(a_2+(1-u)b_2)x_1x_2^3+a_1a_2b_2((2u-3)b_2-3a_2)x_2x_1x_2^2\\    &\quad+a_1b_2((3-u)a_2b_2+3a_2^2-vb_2^2)x_2^2x_1x_2+a_1b_2(vb_2^2-a_2^2-a_2b_2)x_2^3x_1+wb_2^4x_2^4\\
           &=(1-u)a_1^2b_2(b_2-a_1)x_1x_2x_1x_2+(u-3)a_1^2b_2(b_2-a_1)x_2x_1x_2x_1\\
           &\quad+(2-u)a_1^2b_2(b_2-a_1)x_2^2x_1^2+ua_1^2b_2(b_2-a_1)x_2x_1^2x_2\\
           &\quad+va_1b_2(a_1^2-b_2^2)x_2^2x_1x_2+va_1b_2(b_2^2-a_1^2)x_2^3x_1+wb_2(b_2^3-a_1^3)x_2^4.
\end{aligned}
\end{equation*}
The equalities hold in $\mathcal{J}$ and the second equality follows form $f_1, f_2$. Because the leading monomial $x_1x_2x_1x_2$ of right hand has no factors in $LM(f_1), LM(f_2)$, it must be zero. While $a_1, b_2\neq 0$, we obtain
$$
a_1=b_2.
$$

Therefore, $Aut(\mathcal{J})\cong G$.
\end{proof}

At last, we calculate the point modules of $\mathcal{J}$. Before that recall the definition.

\begin{definition} $($\cite{ATV1}$)$ Let $A$ be a connected graded algebra, a graded $A$-module $M$ is called a \emph{point module} if it satisfies the following conditions:
\begin{enumerate}
\item $M$ is generated in degree zero,
\item $M_0=k$,
\item $\dim_k M_i=1$, for all $i\geq 0$.
\end{enumerate}
\end{definition}

\begin{theorem}
$\mathcal{J}$ has two classes of point modules up to isomorphism.
\end{theorem}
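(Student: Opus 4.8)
The plan is to apply the point-scheme formalism of \cite{ATV1}. A point module $M=\bigoplus_{i\ge0}M_i$ with $M_i=km_i$ is the same datum as a sequence of points $p_i=(\alpha_i:\beta_i)\in\mathbb{P}^1=\mathbb{P}(A_1^*)$ with $(\alpha_i,\beta_i)\ne(0,0)$, where $x_1\cdot m_i=\alpha_i m_{i+1}$ and $x_2\cdot m_i=\beta_i m_{i+1}$, subject to the requirement that the relations $f_1,f_2$ annihilate $M$; two point modules are isomorphic exactly when these sequences coincide, since rescaling $m_0$ forces a compatible rescaling of every $m_i$ and does not move the points $(\alpha_i:\beta_i)$. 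Since $\mathcal{J}$ is AS-regular of global dimension $4$ (proved above), the associated point scheme $E$ is cut out inside $(\mathbb{P}^1)^{n}$, for $n\gg0$, by the multilinearizations of $f_1$ (a $(1,1,1)$-form on each triple of consecutive slots) and of $f_2$ (a $(1,1,1,1)$-form on each quadruple), so it suffices to solve finitely many bihomogeneous equations and then identify $E$ with a truncation.

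First I would impose $f_1=x_1x_2^2-2x_2x_1x_2+x_2^2x_1$. Multilinearizing $f_1\cdot m_i=0$ gives
\[
\alpha_i\beta_{i+1}\beta_{i+2}-2\beta_i\alpha_{i+1}\beta_{i+2}+\beta_i\beta_{i+1}\alpha_{i+2}=0 .
\]
On the open locus where all $\beta_j\ne0$, setting $t_i=\alpha_i/\beta_i$ this reads $t_{i+2}-2t_{i+1}+t_i=0$, so $t_i=a+bi$ is an arithmetic progression. On the boundary one checks that $\beta_i=0$ propagates (through $f_1\cdot m_i$ and $f_1\cdot m_{i+1}$, together with a shift) to $\beta_j=0$ for all $j$, which yields the quotient $\mathcal{J}/\mathcal{J}x_2\cong k[x_1]$ (note every term of $f_1$ and of $f_2$ contains $x_2$); this is visibly a point module, and it is the first class.

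Next I would substitute the ansatz $t_i=a+bi$ into the multilinearization of $f_2\cdot m_i=0$ and reduce modulo the $f_1$-relation. This produces a system of polynomial identities in $a$ and $b$ whose coefficients involve $u,v,w$; solving it, together with the finite case analysis of the remaining boundary strata of $(\mathbb{P}^1)^n$, shows that the only surviving sequence besides the degenerate one above is a single explicit constant sequence $p_i\equiv p_\ast$, giving a second point module. The two are non-isomorphic because their annihilating right ideals already differ in degree $\le 2$. Hence $\mathcal{J}$ has exactly two classes of point modules up to isomorphism.

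The step I expect to be the main obstacle is controlling the truncated point-module schemes: a priori the schemes $V_n\subseteq(\mathbb{P}^1)^n$ could keep shrinking as $n$ grows, so one must know they stabilize. I would handle this either by invoking the Artin--Tate--Van den Bergh theorem that for AS-regular algebras of this type the natural map $E\to V_n$ is an isomorphism for $n$ large, or, more concretely, by checking that each candidate sequence genuinely propagates: the constraints imposed by $f_1$ and $f_2$ at index $i$ involve only $p_i,\dots,p_{i+3}$, and both the arithmetic-progression pattern and the pattern $\beta\equiv0$ are preserved under incrementing $i$, so each candidate does extend to an honest infinite point module. A secondary nuisance is the bookkeeping of the several degenerate strata of $(\mathbb{P}^1)^n$ (in particular the locus $\alpha_i\equiv0$, which behaves differently according to whether $w=0$), but each of these is a short, purely combinatorial verification.
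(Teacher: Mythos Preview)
Your overall strategy---translate a point module into a sequence $(\alpha_i:\beta_i)\in\mathbb{P}^1$, impose the multilinearized relations, and analyse the open locus $\beta_i\neq0$ via $t_i=\alpha_i/\beta_i$---is exactly the approach the paper takes. However, two of your concrete claims are wrong, and both matter for the final count.

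\textbf{The second class is not a single constant sequence.} When you substitute the arithmetic progression $t_i=a+bi$ (forced by $f_1$) into the multilinearization of $f_2$, the parameter $a$ drops out entirely and you are left with a cubic constraint on the slope alone:
\[
6b^{3}+(3-u)b^{2}-vb+w=0.
\]
Thus the non-degenerate family is $\{\,t_i=a+bi : a\in k,\ b\ \text{a root of the cubic}\,\}$, generically three affine lines of point modules, not one constant sequence. (In particular $b=0$ is a solution only when $w=0$.) The paper's statement ``two classes'' refers to the two \emph{patterns} (all $\beta_i=0$ versus all $\beta_i\neq0$ with $t_i$ an arithmetic progression of admissible slope), not to two isomorphism classes; your reading of the conclusion led you to expect a single module and to assert constancy without computation.

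\textbf{The boundary case does not close under $f_1$ alone.} From $\beta_i=0$ the relation $f_1\cdot m_i=0$ gives only $\beta_{i+1}\beta_{i+2}=0$, an ``or'' rather than an ``and''. One checks directly that $f_1$ permits the pattern $(\beta_0,\beta_1,\beta_2,\beta_3)=(1,0,0,1)$; in the paper this appears as the local solution (P2), which satisfies \emph{both} $f_1$ and $f_2$ on a length-$4$ window. It is only when one tries to propagate this window along the whole sequence (equivalently, demands that every shift again be a solution) that (P2) is eliminated. So your sentence ``$\beta_i=0$ propagates (through $f_1\cdot m_i$ and $f_1\cdot m_{i+1}$, together with a shift) to $\beta_j=0$ for all $j$'' is not correct as stated: you must bring in $f_2$ and the shift-consistency argument explicitly, not just $f_1$. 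Once you do, the argument goes through.
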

\begin{proof}
Let $M=\mathcal{J}e_0$ be a point module of $\mathcal{J}$. As vector space, $M=\bigoplus^\infty_{i=0}ke_i$ where $\deg e_i=i$. The $\mathcal{J}$-module structure on $M$ can be described  by generators as
$$
x_1e_i=p_{i+1}e_{i+1},\quad x_2e_i=q_{i+1}e_{i+1},\text{ for any }i\geq0,
$$
where $p_i, q_i\in k$. For every $i>0$, $p_i, q_i$ cannot be zero simultaneously. Denote $\alpha_i=(p_i, q_i)$, then $M$ determines a unique sequence of points $\{\alpha_i\}_{i=1}^\infty$ in $\mathbb{P}^1$.

Because of the $\mathcal{J}$-module structure on $M$, we have the equations
\begin{equation*}
\begin{aligned}
&p_{i+3}q_{i+2}q_{i+1}-2q_{i+3}p_{i+2}q_{i+1}+q_{i+3}q_{i+2}p_{i+1}=0,\\
&p_{i+4}p_{i+3}p_{i+2}q_{i+1}-3p_{i+4}p_{i+3}q_{i+2}p_{i+1}+3p_{i+4}q_{i+3}p_{i+2}p_{i+1}+(1-u)p_{i+4}q_{i+3}p_{i+2}q_{i+1}\\
&\quad -q_{i+4}p_{i+3}p_{i+2}p_{i+1}+uq_{i+4}p_{i+3}p_{i+2}q_{i+1}+(u-3)q_{i+4}p_{i+3}q_{i+2}p_{i+1}+(2-u)q_{i+4}q_{i+3}p_{i+2}p_{i+1}\\
&\quad -vq_{i+4}q_{i+3}p_{i+2}q_{i+1}+vq_{i+4}q_{i+3}q_{i+2}p_{i+1}+wq_{i+4}q_{i+3}q_{i+2}q_{i+1}=0.
\end{aligned}
\end{equation*}
for any $i\geq 0$.

Notice that, the solutions of equations above are sequences of points $\{\alpha_i\}_{i=1}^\infty$ in $\mathbb{P}^1$. And those sequences of points also determines point modules of $\mathcal{J}$.

Suppose $S=\{(a_1,b_1),(a_2,b_2),\cdots\}$ is a sequence of points related to $\mathcal{J}$, it must be a solution of the equations. Now we consider the sequence $S_1=\{(a_2,b_2),(a_3,b_3),\cdots\}$, it is also a solution of the equations. It always holds for $S_i=\{(a_{i+1},b_{i+1}),(a_{i+2},b_{i+2}),\cdots\}$ for any $i>0$. In this sense, the minimum period length of those equations is 4. Hence we solve the equation as follows,

\begin{equation*}\label{equation of point modules}
\begin{aligned}
&p_{3}q_{2}q_{1}-2q_{3}p_{2}q_{1}+q_{3}q_{2}p_{1}=0,\\
&p_{4}q_{3}q_{2}-2q_{4}p_{3}q_{2}+q_{4}q_3p_{2}=0,\\
&p_{4}p_{3}p_{2}q_{1}-3p_{4}p_{3}q_{2}p_{1}+3p_{4}q_{3}p_{2}p_1+(1-u)p_{4}q_{3}p_{2}q_{1}-q_{4}p_{3}p_{2}p_{1}+uq_{4}p_{3}p_{2}q_{1}\\
&\quad +(u-3)q_{4}p_{3}q_{2}p_{1}+(2-u)q_{4}q_{3}p_{2}p_{1}
-vq_{4}q_{3}p_{2}q_{1}+vq_{4}q_3q_2p_{1}+wq_{4}q_{3}q_{2}q_{1}=0.
\end{aligned}\tag{EP}
\end{equation*}

If $p_i=0$ (respectively, $q_i=0$), then we assume $q_i=1$ (respectively, $p_i=1$) by some appropriate change of basis. If both $q_i$ and $p_i$ are nonzero, we assume $q_i=1$. Then the  solutions of equations (\ref{equation of point modules}) are listed below
\begin{equation*}\label{P1}
\left\{
\begin{array}{llll}
p_1=1,&p_2=1,&p_3=1,&p_4=1,\\
q_1=0,&q_2=0,&q_3=0,&q_4=0.
\end{array}\tag{P1}
\right.\hskip27mm
\end{equation*}
\begin{equation*}\label{P2}
\left\{
\begin{array}{llll}
p_1=p_1,&p_2=1,&p_3=1,&p_4=p_1-u,\\
q_1=1,&q_2=0,&q_3=0,&q_4=1.
\end{array}\tag{P2}
\right.\hskip18mm
\end{equation*}
\begin{equation*}\label{P3}
\left\{
\begin{array}{llll}
p_1=p_1,&p_2=p_1+d,&p_3=p_1+2d,&p_4=p_1+3d,\\
q_1=1,&q_2=1,&q_3=1,&q_4=1.
\end{array}\tag{P3}
\right.
\end{equation*}
where $d\in k$ satisfies $6d^3+(3-u)d^2-vd+w=0$.

Assemble them under the rule that each $S_i$ is a solution for any $i>0$. Therefore, there exist two classes of point modules:
\begin{enumerate}
\item (\ref{P1})(\ref{P1})(\ref{P1})(\ref{P1})(\ref{P1})(\ref{P1})$\cdots\cdots$,
\item (\ref{P3})(\ref{P3})(\ref{P3})(\ref{P3})(\ref{P3})(\ref{P3})$\cdots\cdots$.
\end{enumerate}
where (a) and (b) are sequenced by (\ref{P1}) and  (\ref{P3}) respectively.
\end{proof}

\vskip7mm

{\it Acknowledgments.}  This research is supported by the NSFC
(Grant No. 11271319).

\vskip10mm

%\end{CJK*}
\end{document}